\newcommand{\ovl}{\overline}
\newcommand{\Z}{\mathbb{Z}}
\newcommand{\N}{\mathbb{N}}
 \renewcommand{\to}{\rightarrow}
\DeclareMathOperator{\Gal}{Gal}
\DeclareMathOperator{\GL}{GL}
\DeclareMathOperator{\PGL}{PGL}
\DeclareMathOperator{\SL}{SL}
\DeclareMathOperator{\Lim}{lim}
\DeclareMathOperator{\red}{red}
\DeclareMathOperator{\Hom}{Hom}
\DeclareMathOperator{\Spec}{Spec}
\DeclareMathOperator{\id}{id}
\DeclareMathOperator{\op}{op}
\DeclareMathOperator{\Grp}{Grp}
\DeclareMathOperator{\Lie}{Lie}
\DeclareMathOperator{\Alg}{Alg}
\DeclareMathOperator{\AlgGrp}{AlgGrp}
\DeclareMathOperator{\Sm}{sm}
\DeclareMathOperator{\sep}{s}
\DeclareMathOperator{\Char}{char}
\DeclareMathOperator{\rank}{rank}
\newcommand{\Ga}{{\mathbb G}_a}
\newcommand{\Gm}{{\mathbb G}_m}
\newcommand{\imod}[1]{\allowbreak\mkern4mu({\operator@font mod}\,\,#1)}
\newtheorem{thm}{Theorem}[section]
\newtheorem{prop}[thm]{Proposition}
\newtheorem{lem}[thm]{Lemma}
\newtheorem{cor}[thm]{Corollary}
\theoremstyle{definition}
\newtheorem{rem}[thm]{Remark}
\newtheorem{defn}[thm]{Definition}
\newtheorem{example}[thm]{Example}
\newtheorem{question}[thm]{Question}
\begin{document}

\title{The subgroup structure of pseudo-reductive groups}

\author[M.\  Bate]{Michael Bate}
\address
{Department of Mathematics,
University of York,
York YO10 5DD,
UK}
\email{michael.bate@york.ac.uk}
\author[B.\  Martin]{Ben Martin}
\address
{Institute of Mathematics,
University of Aberdeen,
Aberdeen AB24 3FX,
UK}
\email{b.martin@abdn.ac.uk}
\author[G. R\"ohrle]{Gerhard R\"ohrle}
\address
{Fakult\"at f\"ur Mathematik,
Ruhr-Universit\"at Bochum,
D-44780 Bochum, Germany}
\email{gerhard.roehrle@rub.de}
\author[D.\ Sercombe]{Damian Sercombe}
\address
{Fakult\"at f\"ur Mathematik,
Ruhr-Universit\"at Bochum,
D-44780 Bochum, Germany}
\email{damian.sercombe@rub.de}

\begin{abstract} 
Let $k$ be a field. We investigate the relationship between subgroups of a pseudo-reductive $k$-group $G$ and its maximal reductive quotient $G'$, with applications to the subgroup structure of $G$. Let $k'/k$ be the minimal field of definition for the geometric unipotent radical of $G$, and let $\pi':G_{k'} \to G'$ be the quotient map. We first characterise those smooth subgroups $H$ of $G$ for which $\pi'(H_{k'})=G'$. We next consider the following questions: given a subgroup $H'$ of $G'$, does there exist a subgroup $H$ of $G$ such that $\pi'(H_{k'})=H'$, and if $H'$ is smooth can we find such a $H$ that is smooth? We find sufficient conditions for a positive answer to these questions. In general there are various obstructions to the existence of such a subgroup $H$, which we illustrate with several examples. Finally, we apply these results to relate the maximal smooth subgroups of $G$ with those of $G'$.
\end{abstract}
% For the case where the root system of $G$ is reduced, we have stronger statements.
% There are several obstructions to a positive answer to these questions; we illustrate them with a variety of examples. In certain cases we give a positive answer to these questions; most notably when $H'$ is normalised by a maximal torus of $G'$.}
% We then consider the following question: given a subgroup $H'$ of $G'$, under what conditions does there exists a subgroup $H$ of $G$ such that $\pi'(H_{k'})=H'$, and what properties does $H$ satisfy? We then characterise the following condition on $G$: for every torus $S'$ of $G'$, there exists a torus $S$ of $G$ satisfying $\pi'(S_{k'})=S'$. Next, we look at how $G'(k')$-conjugates of $H'$ interact with $G(k)$-conjugates of $H$. Subsequently, given a maximal torus $T$ of $G$ and a smooth subgroup $H'$ of $G'$ normalised by $\pi'(T_{k'})$, we show that there exists a smooth subgroup $H$ of $G$ such that $H$ is normalised by $T$ and $\pi'(H_{k'})=H'$.
% Next, given a subgroup $H'$ of $G'$, we consider the following questions. Does there exists a subgroup $H$ of $G$ such that $\pi'(H_{k'})=H'$? If $H'$ is smooth does there exist such subgroup $H$ that is smooth? 
%, all of which involve smooth subgroups $H'$ that are normalised by a maximal torus of $G'$. , and does this correspondence respect conjugacy classes? 
% If $H'$ is smooth there need not exist a smooth subgroup $\overline{H}$ of $G$ such that $\pi'(\overline{H}_{k'})=H'$, however we show that under certain (mild) conditions such a smooth subgroup $\overline{H}$ indeed exists.

\maketitle

\section{Introduction}
\label{sec:intro}

 A fundamental problem in group theory is to describe the subgroup structure of a given group. In this paper we study the subgroup structure of pseudo-reductive groups over a field $k$.
 %Many of our results hold more generally for arbitrary smooth connected affine $k$-groups; however, we mostly choose to state them for pseudo-reductive $k$-groups since the general case will follow from a straightforward lifting argument.
 Pseudo-reductive groups have received considerable attention over the last decade and a half \cite{BRSS}, \cite{CGP}, \cite{CP}, \cite{CP1}.
 
  Let $G$ be a pseudo-reductive $k$-group. Let $\ovl{k}$ be an algebraic closure of $k$, and let $k^{\sep}$ be the separable closure of $k$ in $\ovl{k}$. Let $k'/k$ be the minimal field of definition for the geometric unipotent radical $\mathscr{R}_u(G_{\ovl{k}})$. Then $\mathscr{R}_u(G_{\ovl{k}})$ descends to $k'$, and $G':= G_{k'}/\mathscr{R}_u(G_{k'})$ is a reductive $k'$-group.  We denote the canonical projection by $\pi':G_{k'} \to G'$, and the associated map under the adjunction of extension of scalars and Weil restriction by $i_G:G \to R_{k'/k}(G')$.   Given a (not necessarily smooth) subgroup $H$ of $G$, we obtain a subgroup $H':= \pi'(H_{k'})$ of the reductive $k'$-group $G'$.  This gives a mapping ${\mathcal S}$ from the set of subgroups of $G$ to the set of subgroups of $G'$ defined by $H\mapsto \pi'(H_{k'})$.
  
  We investigate the relationship between subgroups $H$ of $G$ and subgroups $H'$ of $G'$.  Our motivation is that it is easier to understand subgroups of a reductive group than those of a pseudo-reductive group; for example, a great deal is known about the reductive subgroups of a simple exceptional group when $k= \ovl{k}$ \cite{LS2}.  The mapping ${\mathcal S}$ allows us to pass from subgroups of $G$ to subgroups of $G'$.  To reverse this process, we need the following definition.
  
 \begin{defn}\label{defn:levitation} 
 Let $H'$ be a subgroup of $G'$. A subgroup $H$ of $G$ satisfying $\pi'(H_{k'})=H'$ is called a \emph{levitation} of $H'$ in $G$; we say that $H'$ \emph{levitates} to $H$ in $G$. If such a subgroup $H$ exists then we say that $H'$ \emph{levitates} or {\em is levitating} in $G$. If, in addition, $H$ (and hence $H'$) is smooth then $H$ is called a \emph{smooth levitation} of $H'$ in $G$, and we say that $H'$ \emph{smoothly levitates} or is \emph{smoothly levitating} (to $H$) in $G$.
\end{defn}

Usually ${\mathcal S}$ is not injective: that is, a given subgroup of $G'$ can admit more than one levitation.  We show, however, that if $H'$ levitates then $H'$ has a unique largest levitation, and if $H'$ levitates smoothly then $H'$ has a unique largest smooth levitation (Proposition~\ref{largestlevitation}).  An obvious question is whether ${\mathcal S}$ is surjective: that is, does every subgroup of $G'$ levitate?  This is more delicate.  We prove some results showing that certain subgroups of $G'$ levitate under certain additional assumptions, and we give a collection of examples that exhibit various obstructions to levitation.  Some of these subtleties can occur only when the root system of $G$ is non-reduced, so they are restricted to characteristic 2.

We start with the case $H'= G'$.  Here is our first main result.  Recall that a subgroup $L$ of $G$ is called a \emph{Levi subgroup} if it satisfies $\smash{G_{\ovl{k}}=\mathscr{R}_u(G_{\ovl{k}}) \rtimes L_{\ovl{k}}}$ (scheme-theoretically). We introduce a slightly weaker notion. A smooth subgroup $L$ of $G$ is an \emph{almost Levi subgroup} if $\smash{G(\ovl{k})=\mathscr{R}_u(G(\ovl{k})) \rtimes L(\ovl{k})}$ (as abstract groups). % I think we do need smoothness here, as otherwise $L$ could be a direct product of some Levi subgroup $L_0$ of $G$ along with some $L_0$-stable copy of $\alpha_2$ inside the unipotent radical? Is this possible?
Given an almost Levi subgroup $L$ of $G$, if $L$ is not a Levi subgroup then $k$ has characteristic 2 and the root system of $G_{k^{\sep}}$ is non-reduced.
% In Proposition \ref{almostLevichar} we show that, given an almost Levi subgroup $L$ of a pseudo-split pseudo-reductive $G$, if $L$ is not a Levi subgroup then $k$ has characteristic $2$ and $G$ has a pseudo-simple factor $S$ of type $BC_n$ such that $L \cap S \cong \mathrm{SO}_{2n+1}$ and $L_{k'} \cap S_{k'} \cap \ker \pi' \cong (\alpha_2)^{2n}$. In particular, if the root system of $G$ is reduced, its almost Levi subgroups coincide with its Levi subgroups.

\begin{thm}
\label{thm:levitating}
Let $G$ be a pseudo-reductive $k$-group. Let $H$ be a smooth subgroup of $G$, and denote $H':=\pi'(H_{k'})$.
\begin{itemize}
\item[(i)] Suppose $k=k^{\sep}$. Then $H'=G'$ if and only if $H$ contains an almost Levi subgroup of $G$.

\item[(ii)] Suppose the root system of $G_{k^{\sep}}$ is reduced. If $H'=G'$ then the restriction map $\pi'|_{H_{k'}}:H_{k'} \to G'$ is smooth.
\end{itemize}
\end{thm}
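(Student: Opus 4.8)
The plan is to prove (i) first and then deduce (ii) from it. Throughout I will use that base change along $k'\hookrightarrow\ovl k$ carries $\pi'$ to the quotient map $\pi'_{\ovl k}\colon G_{\ovl k}\to G'_{\ovl k}$ with kernel $\mathscr{R}_u(G_{\ovl k})$, and that since $\pi'$ is faithfully flat with $G'$ of finite type the map $\pi'_{\ovl k}$ is surjective on $\ovl k$-points with kernel $\mathscr{R}_u(G)(\ovl k)=\mathscr{R}_u(G(\ovl k))$. The ``if'' direction of (i) is then quick: if $L\leq H$ is an almost Levi subgroup, then $\pi'(L(\ovl k))=\pi'\big(\mathscr{R}_u(G(\ovl k))\,L(\ovl k)\big)=\pi'(G(\ovl k))=G'(\ovl k)$; the subgroup $\pi'(L_{k'})\leq G'$ is a quotient of the smooth group $L_{k'}$, hence smooth, and has full $\ovl k$-points, so $\pi'(L_{k'})=G'$ as $G'$ is smooth and connected, whence $H'=\pi'(H_{k'})\supseteq\pi'(L_{k'})=G'$.

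The real content is the ``only if'' direction of (i). Assuming $H'=G'$, the homomorphism $\pi'|_{H_{k'}}$ factors as $H_{k'}\twoheadrightarrow H'=G'$, and a homomorphism of affine group schemes onto its scheme-theoretic image is faithfully flat, so $\pi'|_{H_{k'}}$ is faithfully flat; hence $H(\ovl k)\to G'(\ovl k)$ is onto and $G(\ovl k)=\mathscr{R}_u(G(\ovl k))\,H(\ovl k)$. A rank count then shows $H$ and $G$ have the same rank: $\rk H_{\ovl k}\geq\rk G'_{\ovl k}=\rk G_{\ovl k}$ (a surjection does not increase the rank of a smooth affine group, and quotienting by a unipotent group preserves it), while $\rk H_{\ovl k}\leq\rk G_{\ovl k}$ since $H\leq G$. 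So a maximal $k$-torus $T$ of $H$ (which exists by Grothendieck's theorem) is a maximal $k$-torus of $G$, and $\pi'$ is equivariant for the conjugation actions of $T$ on $G_{k'}$ and of the maximal $k'$-torus $T':=\pi'(T_{k'})$ on $G'$. I would now decompose with respect to $T$ and $T'$ and use faithful flatness of $\pi'|_{H_{k'}}$ to show that for each root $\bar a\in\Phi(G',T')$ the root group $U'_{\bar a}\cong\Ga$ of $G'$ is the image under $\pi'$ of a one-dimensional $T$-stable $k$-subgroup $V_a$ of $H_{k'}$ lying in the corresponding root group of $G$; feeding the $V_a$ into the CGP construction of Levi subgroups of $G$ should yield a smooth $k$-subgroup $L\leq H$ containing $T$ with $\pi'(L(\ovl k))=G'(\ovl k)$ and $L(\ovl k)\cap\mathscr{R}_u(G(\ovl k))=1$, and as in the ``if'' direction these two properties say exactly that $L$ is an almost Levi subgroup of $G$. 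I expect this construction to be the main obstacle: extracting the $V_a$ inside $H$ from $\pi'(H_{k'})=G'$, assembling them through the structure theory of pseudo-reductive groups, and accommodating the non-reduced case, where $L$ need not be an honest Levi subgroup. This is also the step that uses $k=k^{\sep}$.

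For (ii) I would deduce the result from (i) by base change along the separable extension $k^{\sep}/k$: then $G_{k^{\sep}}$ is pseudo-reductive, the minimal field of definition of its geometric unipotent radical is $k' k^{\sep}=k'\otimes_k k^{\sep}$, the associated reductive quotient is the base change of $G'$, and $H_{k^{\sep}}$ still maps onto the whole quotient. Part (i) then gives an almost Levi subgroup $L$ of $G_{k^{\sep}}$ with $L\leq H_{k^{\sep}}$, and since the root system of $G_{k^{\sep}}$ is reduced the remark preceding the theorem shows $L$ is an honest Levi subgroup, i.e.\ $G_{\ovl k}=\mathscr{R}_u(G_{\ovl k})\rtimes L_{\ovl k}$ scheme-theoretically with $L_{\ovl k}\leq H_{\ovl k}$. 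Writing $N:=\ker(\pi'|_{H_{k'}})=H_{k'}\cap\mathscr{R}_u(G_{k'})$, so that $N_{\ovl k}=H_{\ovl k}\cap\mathscr{R}_u(G_{\ovl k})$, the inclusion $L_{\ovl k}\leq H_{\ovl k}$ makes the semidirect-product decomposition of $G_{\ovl k}$ restrict functorially to $H_{\ovl k}=N_{\ovl k}\rtimes L_{\ovl k}$, so multiplication is an isomorphism of $\ovl k$-schemes $N_{\ovl k}\times L_{\ovl k}\xrightarrow{\ \sim\ }H_{\ovl k}$; as $L_{\ovl k}$ is smooth and nonempty, smoothness of $H_{\ovl k}$ forces $N_{\ovl k}$, and hence $N$, to be smooth, and a faithfully flat homomorphism of group schemes with smooth kernel is smooth, so $\pi'|_{H_{k'}}\colon H_{k'}\to G'$ is smooth. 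The only delicate points here are the identification of $N_{\ovl k}$ with the semidirect complement of $L_{\ovl k}$ in $H_{\ovl k}$ and the routine passage from smoothness of the kernel to smoothness of the map.
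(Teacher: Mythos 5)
Your proof of the ``if'' direction of (i) is fine, and your reduction in (ii) to $k=k^{\sep}$ and use of the remark before the theorem to upgrade the almost Levi to an honest Levi are correct. But the heart of the theorem is the ``only if'' direction of (i), and there you have a genuine gap, which you acknowledge: you announce a plan to extract, for each root $\bar a$ of $G'$, a one-dimensional $T$-stable subgroup $V_a\subseteq H_{k'}$ and then ``feed the $V_a$ into the CGP construction of Levi subgroups,'' but you do not carry this out, and it is not clear it can be carried out directly. The missing idea is much simpler and does not involve building a Levi from root groups inside $H$ at all: one first shows that $H^{\circ}$ is itself \emph{pseudo-reductive}. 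To see this, let $U$ be a smooth connected unipotent normal subgroup of $H^{\circ}$; then $\pi'(U_{k'})$ is a smooth connected unipotent normal subgroup of $G'$, hence trivial, so $U_{k'}\subseteq \mathscr{R}_u(G_{k'})$; the normal closure $N$ of $U$ in $G$ (descended from the subgroup of $G_{k^{\sep}}$ generated by the $G(k^{\sep})$-conjugates of $U_{k^{\sep}}$) is then a smooth connected unipotent normal subgroup of $G$ with $N_{k'}\subseteq\mathscr{R}_u(G_{k'})$, hence $N\subseteq\mathscr{R}_u(G)=1$. Since $H^{\circ}$ is now pseudo-split pseudo-reductive (using $k=k^{\sep}$), \cite[3.4.6]{CGP} produces a Levi subgroup $L$ of $H^{\circ}$, and one checks, using that $\pi'_{\ovl k}|_{H_{\ovl k}}$ factors through the maximal reductive quotient $H_{\ovl k}\to H_{\ovl k}/\mathscr{R}_u(H_{\ovl k})$ and that a unipotent normal subgroup of a reductive group is infinitesimal (Lemma~\ref{unipmap}), that $L$ is an almost Levi subgroup of $G$. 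This is essentially the only step in the theorem requiring an idea, and your sketch does not contain it; your rank computation and the observation that $T$ is a maximal torus of $G$ are true but do not get you close to the construction of $L$.

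For part (ii), your argument that the decomposition $G_{\ovl k}=\mathscr{R}_u(G_{\ovl k})\rtimes L_{\ovl k}$ restricts to give $H_{\ovl k}\cong N_{\ovl k}\times L_{\ovl k}$ as $\ovl k$-schemes, and that smoothness of a product of nonempty finite-type schemes forces smoothness of each factor, is correct and does yield smoothness of $\ker\pi'|_{H_{k'}}$ and hence of $\pi'|_{H_{k'}}$. This is a valid route, though it is longer than the paper's: the paper simply observes that since $H$ contains a Levi subgroup $L$ and $\pi'|_{L_{k'}}$ is an isomorphism, $\pi'|_{H_{k'}}$ induces a surjection on Lie algebras and is therefore smooth by \cite[1.63]{Mi}. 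Both approaches rely on (i), so your (ii) remains conditional on filling the gap above.
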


% The pseudo-split assumption in Theorem \ref{thm:levitating}(i) is essential, as otherwise $G$ need not admit an almost Levi subgroup.
% \textcolor{red}{Open question.} Does Theorem \ref{thm:levitating}(i) still hold if we replace the $k=k^{\sep}$ assumption with $G$ being pseudo-split?}
% In Example \ref{quasiredfails} we show that Theorem \ref{thm:levitating} (both parts (i) and (ii)) fails if we allow $G$ to be quasi-reductive rather than pseudo-reductive.

Next we turn our attention to tori.

\begin{thm}
\label{thm:levitating_crit}
 Let $G$ be a pseudo-reductive $k$-group. Suppose the root system of $G_{k^{\sep}}$ is reduced. The following are equivalent.
 \begin{itemize}
  \item[(a)] $i_G(G)$ contains $\mathscr{D}(R_{k'/k}(G'))$.
  \item[(b)] $i_G(G)$ contains every torus of $R_{k'/k}(G')$.
  \item[(c)] For every torus $S'$ of $G'$, there exists a subgroup $S$ of $G$ such that $\pi'(S_{k'})=S'$. % Every torus of $G'$ levitates in $G$.
  \item[(d)] For every torus $S'$ of $G'$, there exists a torus $S$ of $G$ such that $\pi'(S_{k'})=S'$. % Every torus of $G'$ smoothly levitates to a torus in $G$.
  \item[(e)] For every maximal torus $T'$ of $G'$, there exists a maximal torus $T$ of $G$ such that $\pi'(T_{k'})=T'$. % Every maximal torus of $G'$ smoothly levitates to a maximal torus in $G$.
  \item[(f)] $G$ is standard, and the minimal field of definition for the geometric unipotent radical of each pseudo-simple component of $G$ equals $k'/k$.
 \end{itemize}
\end{thm}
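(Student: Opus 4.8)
The plan is to prove the cycle of implications
\[
(f)\ \Rightarrow\ (a)\ \Rightarrow\ (b)\ \Rightarrow\ (c)\ \Rightarrow\ (d)\ \Rightarrow\ (e)\ \Rightarrow\ (f).
\]
The basic tool is the factorisation $\pi'=q_{G'}\circ(i_G)_{k'}$, where $q_{G'}\colon R_{k'/k}(G')_{k'}\to G'$ is the counit of the adjunction between base change to $k'$ and Weil restriction; thus $\mathcal{S}(H)=\pi'(H_{k'})=q_{G'}(i_G(H)_{k'})$ for every subgroup $H\leqs G$, and $q_{G'}$ restricts on $R_{k'/k}(S')_{k'}$ to the counit $q_{S'}$ for any $k'$-subgroup $S'\leqs G'$.

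\emph{The lifting implications $(b)\Rightarrow(c)\Rightarrow(d)\Rightarrow(e)$ are soft.} For $(b)\Rightarrow(c)$: given a torus $S'\leqs G'$, let $\mathscr{T}$ be the maximal $k$-torus of the closed $k$-subgroup $R_{k'/k}(S')\leqs R_{k'/k}(G')$. Over $\ovl{k}$ one has $R_{k'/k}(S')_{\ovl{k}}\cong S'_{\ovl{k}}\ltimes U$ with $U$ unipotent and $\mathscr{T}_{\ovl{k}}$ mapping isomorphically onto $S'_{\ovl{k}}$ under $q_{S',\ovl{k}}$; hence $q_{S'}(\mathscr{T}_{k'})=S'$. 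By $(b)$, $\mathscr{T}\subseteq i_G(G)$, so $H:=i_G^{-1}(\mathscr{T})$ is a subgroup of $G$ with $\pi'(H_{k'})=q_{G'}(\mathscr{T}_{k'})=S'$. For $(c)\Rightarrow(d)$, replace the subgroup $H$ furnished by $(c)$ by a maximal $k$-torus $S$ of $H$: over $\ovl{k}$ the reduced identity component of $H_{\ovl{k}}$ surjects onto $S'_{\ovl{k}}$ (the image of a smooth connected group is smooth, connected and of finite index, hence equals the torus $S'$), so a maximal torus of $H_{\ovl{k}}$ surjects onto $S'_{\ovl{k}}$ and therefore $\pi'(S_{k'})=S'$. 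For $(d)\Rightarrow(e)$, given a maximal torus $T'$ of $G'$, lift it by $(d)$ to a torus $S\leqs G$ and enlarge $S$ to a maximal torus $T$ of $G$; then $\pi'(T_{k'})$ is a torus of $G'$ containing the maximal torus $T'$, so $\pi'(T_{k'})=T'$.

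\emph{The equivalence $(a)\Leftrightarrow(b)$ is elementary.} Let $\mathscr{Z}$ be the maximal central $k$-torus of $R_{k'/k}(G')$. First, $\mathscr{Z}\subseteq i_G(G)$ always: since $i_G(G)_{\ovl{k}}$ surjects onto $G'_{\ovl{k}}$ under $q_{G',\ovl{k}}$, its intersection with the preimage of a maximal torus $T'$ of $G'_{\ovl{k}}$ surjects onto $T'$ and lies in the connected solvable group $T'\ltimes\mathscr{R}_u(R_{k'/k}(G')_{\ovl{k}})$, hence contains a maximal torus of $R_{k'/k}(G')_{\ovl{k}}$; and the central torus lies in every maximal torus. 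Second, every $k$-torus of $R_{k'/k}(G')$ lies in $\mathscr{D}(R_{k'/k}(G'))\cdot\mathscr{Z}$: by conjugacy of tori in connected solvable groups, every $\ovl{k}$-torus of $R_{k'/k}(G')_{\ovl{k}}$ is conjugate by an element of $\mathscr{R}_u(R_{k'/k}(G')_{\ovl{k}})$ into $G'_{\ovl{k}}=\mathscr{D}(G'_{\ovl{k}})\cdot\mathscr{Z}_{\ovl{k}}\subseteq(\mathscr{D}(R_{k'/k}(G'))\cdot\mathscr{Z})_{\ovl{k}}$, and $\mathscr{D}(R_{k'/k}(G'))\cdot\mathscr{Z}$ is normal in $R_{k'/k}(G')$. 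These two facts give $(a)\Rightarrow(b)$. Conversely, $\mathscr{D}(R_{k'/k}(G'))$ is pseudo-semisimple, so it is generated by its pseudo-simple normal $k$-subgroups, and each such subgroup $N$ is generated by its $k$-tori: one may assume $k$ infinite (if $k$ is finite then $k'=k$), and then $R_{k'/k}(G')$ is unirational, so $R_{k'/k}(G')(k)$ is Zariski dense and normalises $N$, whence the subgroup generated by the $R_{k'/k}(G')(k)$-conjugates of a nontrivial $k$-torus of $N$ is a positive-dimensional normal $k$-subgroup of the pseudo-simple group $N$, hence is $N$ itself. Thus $\mathscr{D}(R_{k'/k}(G'))$ is generated by $k$-tori of $R_{k'/k}(G')$, giving $(b)\Rightarrow(a)$.

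\emph{The structural implications $(f)\Rightarrow(a)$ and $(e)\Rightarrow(f)$} rely on the Conrad--Gabber--Prasad structure theory, and I expect $(e)\Rightarrow(f)$ to be the main obstacle. For $(f)\Rightarrow(a)$ one reads off from the standard construction that $i_G$ maps $\mathscr{D}(G)$ onto $\mathscr{D}(R_{k'/k}(G'))$: the derived group $\mathscr{D}(G)$ is, compatibly with $i_G$, a central quotient of $R_{k'/k}(\mathscr{G}')$ with $\mathscr{G}'$ the simply connected central cover of $\mathscr{D}(G')$, and the hypothesis that every pseudo-simple component of $G$ has minimal field of definition exactly $k'$ excludes any inseparable degeneration of the Weil-restricted factors; hence $i_G(G)\supseteq i_G(\mathscr{D}(G))=\mathscr{D}(R_{k'/k}(G'))$. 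For $(e)\Rightarrow(f)$ I would argue by contraposition. If $G$ is not standard, then, since the root system of $G_{k^{\sep}}$ is reduced, the Conrad--Gabber--Prasad classification forces $G$ to have a pseudo-simple normal $k$-subgroup which is of basic exotic type (over $k^{\sep}$) in characteristic $2$ or $3$; a direct analysis of the exotic construction, governed by the very special isogeny between the two connected semisimple groups attached to it, then produces a maximal torus $T'$ of $G'$ such that no torus of $i_G(G)$ maps onto $T'$ under $q_{G'}$, so that no maximal torus of $G$ levitates $T'$, contradicting $(e)$. If $G$ is standard but the field condition fails, a pseudo-simple component contributes to $G'$ only over a proper subfield of $k'$, and again the corresponding maximal torus of $G'$ fails to lift. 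The crux is precisely this conversion of the abstract statement ``$G$ is not standard, or the fields do not match'' into a concrete obstruction to lifting a maximal torus, which forces one into the classification of pseudo-reductive groups with reduced root system and a careful study of how the very special isogeny interacts with maximal tori under Weil restriction along an inseparable extension; the remaining implications, by contrast, are formal, using only the factorisation of $\mathcal{S}$ through $i_G$ and standard facts about tori, centres and derived groups of smooth affine group schemes.
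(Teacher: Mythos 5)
The formal implications $(a)\Leftrightarrow(b)$ and $(b)\Rightarrow(c)\Rightarrow(d)\Rightarrow(e)$ are essentially sound and match the paper's strategy modulo presentation: you pass to $\ovl{k}$ and use the Levi decomposition of Weil restrictions, whereas the paper first reduces to $k=k^{\sep}$ and works with Levi subgroups and \cite[C.4.4]{CGP}; both routes work and rest on the same factorisation $\pi' = q_{G'}\circ(i_G)_{k'}$.

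The genuine gap is your choice to close the cycle via $(e)\Rightarrow(f)$. You correctly identify this as ``the main obstacle'' and then only sketch it, proposing a contrapositive argument that would require the full Conrad--Gabber--Prasad classification of non-standard pseudo-reductive groups with reduced root system, plus a case-by-case construction of a non-levitating maximal torus in each exotic type, plus a separate argument when $G$ is standard but some component has the wrong minimal field of definition. None of this is carried out. The paper sidesteps all of it: it proves $(e)\Rightarrow(b)$ in a few lines --- given a maximal torus $T_0$ of $R_{k'/k}(G')$ and a maximal torus $T$ of $G$ levitating $T':=q_{G'}((T_0)_{k'})$, both $i_G(T)$ and $T_0$ lie in the commutative group $R_{k'/k}(T')$ by Lemma~\ref{adjunctionpropnew} and hence coincide, being its unique maximal torus --- and then handles $(a)\Leftrightarrow(f)$ separately by decomposing $\mathscr{D}(G)$ into pseudo-simple components $S_i$ and invoking \cite[Prop.~5.3.8]{CGP}, which characterises standardness of an absolutely pseudo-simple group with reduced root system precisely by $i_G(G)=\mathscr{D}(R_{k'/k}(G'))$. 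Your $(f)\Rightarrow(a)$ sketch gestures at this but is too vague to verify --- the ``central quotient of $R_{k'/k}(\mathscr{G}')$'' description needs to be squared with the actual standard construction, and the crux is exactly \cite[5.3.8]{CGP} together with a dimension count forcing the fields of definition of the $S_i$ to all equal $k'$; this is where the second clause of (f) enters and it is not a formality. In short: your cycle is logically valid, but you have pushed the entire content of the theorem into the one implication you do not prove, and the classification route you propose for it is considerably more work than the paper's short argument.
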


\noindent We find an example where a maximal torus of $G'$ does not levitate (Example~\ref{ex:non_levitating}), and an example where a maximal torus of $T$ levitates but does not levitate smoothly (Example~\ref{BC1torusnotlift}). 

Things are murkier when we move away from the case when $H'$ is a torus, even when the root system of $G_{k^{\sep}}$ is reduced and conditions (a)--(e) of Theorem~\ref{thm:levitating_crit} hold. We find an example where a smooth wound unipotent subgroup of $G'$ does not levitate at all (Example~\ref{ex:smooth_nonlev}), and an example where $G$ is of the form $R_{k'/k}(G')$ for some reductive $G'$ and there is a non-smooth subgroup of $G'$ that does not levitate (Example~\ref{ex:nonsmooth_nonlev}). % \textcolor{red}{Open question.} Under these assumptions (root system of $G$ is reduced and Theorem~\ref{thm:levitating_crit} holds), does there exist a smooth subgroup of $G'$ levitates but does not levitate smoothly? Our old Example~\ref{ex:no_smooth_levitation} doesn't seem to work.

On the other hand, the following result shows that the relationship between subgroups of $G$ and $G'$ is quite well-behaved if we restrict ourselves to regular smooth subgroups.  (Recall that a subgroup of $G$ is called \emph{regular} if it is normalised by some maximal torus of $G$.)

\begin{thm}\label{thm:levitating_regsubgroups} Let $G$ be a pseudo-reductive $k$-group. Let $H'$ be a smooth subgroup of $G'$. Suppose there exists a maximal torus $T$ of $G$ such that $\pi'(T_{k'})$ normalises $H'$.
\begin{itemize}
\item[(i)] There exists a largest smooth subgroup $H$ of $G$ such that $\pi'(H_{k'})=H'$. Moreover $H$ is normalised by $T$. % If in addition $G$ has reduced root system, $H'$ is connected and contains $Z(G')$, then $H$ is connected (the only extra bit it can have is $\ker i_G$, which is central in $G$). For example this applies if $H'$ has maximal rank in $G'$, since $Z(G')$ is contained in all maximal tori of $G'$.

\item[(ii)] $(H')^{\circ}$ is a maximal torus (resp. root group, parabolic subgroup) of $G'$ if and only if $H^{\circ}$ is a Cartan subgroup (resp. root group, pseudo-parabolic subgroup) of $G$.
\end{itemize}
\end{thm}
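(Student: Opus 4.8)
\emph{Part (i).} Write $T':=\pi'(T_{k'})$; since $T_{k'}$ is a maximal torus of $G_{k'}$ and $\mathscr{R}_u(G_{k'})$ is unipotent, $\pi'$ restricts to an isomorphism $T_{k'}\xrightarrow{\ \sim\ }T'$ onto the maximal torus $T'$ of $G'$. I would first reduce everything to the existence of \emph{some} smooth levitation. Suppose $H_0$ is a smooth subgroup of $G$ with $\pi'((H_0)_{k'})=H'$. The subgroup $H_1$ of $G$ generated by the conjugates $\{\,tH_0t^{-1}:t\in T\,\}$ is smooth (its formation commutes with base change to $\ovl{k}$, over which it is a subgroup generated by smooth subgroups of $G_{\ovl{k}}$, hence is smooth), is normalised by $T$ by construction, and satisfies $\pi'((H_1)_{k'})=\la\, t'H't'^{-1}:t'\in T'\,\ra=H'$ because $T'$ normalises $H'$. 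Thus $H'$ levitates smoothly, so Proposition~\ref{largestlevitation} furnishes a unique largest smooth levitation $H$, which contains $H_1$. Finally $H$ itself is normalised by $T$: the subgroup generated by $\{\,tHt^{-1}:t\in T\,\}$ is again a smooth levitation of $H'$ by the same argument, so it is contained in $H$ by maximality; as it visibly contains $H$, it equals $H$, which is precisely the assertion that $T$ normalises $H$.

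It therefore remains to produce one smooth levitation $H_0$ of $H'$ — the step that genuinely uses that $H'$ is normalised by $T'$. Since $H'$ is smooth and normalised by the maximal torus $T'$ of the reductive group $G'$, it is a regular subgroup: $(H')^{\circ}$ is directly spanned by a subtorus $Z$ of $T'$ together with the root groups $U'_a$ for $a$ in a closed subset $\Psi\subseteq\Phi(G',T')$, and the finitely many components of $H'$ are represented by elements normalising a maximal torus of $(H')^{\circ}$. Each ingredient lifts through $\pi'$. As $C_G(T)$ is commutative, $\pi'(C_G(T)_{k'})$ is a commutative smooth $T'$-stable subgroup of $G'$ containing $T'$, hence $\pi'(C_G(T)_{k'})=C_{G'}(T')=T'$; using that $C_G(T)$ is moreover pseudo-reductive, a $k$-subgroup of $C_G(T)$ mapping onto $Z$ can be extracted. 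Each $a\in\Psi$ is a root of $G$ (its root space occurs in $\Lie(G')$, hence in $\Lie(G)$), and the associated pseudo-reductive root group $U_{(a)}$ of $G$ relative to $T$ maps onto $U'_a$. Finally the component representatives lift because the relevant normalisers in $G_{k'}$ surject onto those in $G'$ (the Weyl groups coincide). Taking $H_0$ to be the subgroup of $G$ generated by these lifts and checking $\pi'((H_0)_{k'})=H'$ on generators completes the construction.

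\emph{Part (ii).} Let $H$ be as in (i). Two observations drive the argument. First, $\pi'((H^{\circ})_{k'})=(H')^{\circ}$, since $\pi'((H^{\circ})_{k'})$ is connected of finite index in $H'$. Second, any smooth connected subgroup $K$ of $G$ with $\pi'(K_{k'})=(H')^{\circ}$ lies in $H^{\circ}$: indeed $\la H,K\ra$ is then a smooth levitation of $\la H',(H')^{\circ}\ra=H'$, so lies in $H$ by maximality, and $K$ is connected. Now I treat the three cases. \emph{Maximal torus / Cartan:} if $(H')^{\circ}$ is a maximal torus then, $T'$ being maximal and normalising it, $(H')^{\circ}=T'$, so $C_G(T)\subseteq H^{\circ}$; by the structure theory of $T$-stable connected smooth subgroups of $G$, $H^{\circ}$ cannot strictly contain $C_G(T)$ (it would then contain a root group $U_{(b)}$, whose nontrivial image would be a unipotent subgroup of the torus $T'$), so $H^{\circ}=C_G(T)$; conversely if $H^{\circ}=C_G(S)$ with $S$ a maximal torus (which we may take to be $T$) then $\pi'(C_G(T)_{k'})=T'$ is a maximal torus. \emph{Root group:} if $(H')^{\circ}=U'_a$, then $H^{\circ}$ is a $T$-stable connected smooth subgroup with unipotent image, its centraliser of $T$ is a smooth connected unipotent normal subgroup of the commutative pseudo-reductive group $C_G(T)$, hence trivial, so $H^{\circ}$ is directly spanned by root groups of $G$, and as its image lies in the single root group $U'_a$ it must be a root group $U_{(b)}$ of $G$; the converse records that $\pi'((U_{(b)})_{k'})$ is a root group of $G'$. \emph{Parabolic / pseudo-parabolic:} if $(H')^{\circ}$ is a parabolic $P'$, use the bijection of \cite{CGP} between pseudo-parabolic subgroups of $G$ and parabolic subgroups of $G'$ via $P\mapsto\pi'(P_{k'})$: the pseudo-parabolic $P$ with $\pi'(P_{k'})=P'$ lies in $H^{\circ}$ by the second observation, while a $T$-stable connected smooth subgroup mapping onto $P'$ cannot contain a root group $U_{(b)}$ whose image lies outside $P'$; hence $H^{\circ}=P$, and in fact $H=H^{\circ}=P$ because pseudo-parabolics are self-normalising. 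The converse is again \cite{CGP}.

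The principal obstacle is the structure-theoretic input in the non-reduced case, $\Char k=2$ with $\Phi(G,T)$ of type $BC_n$: there the root groups $U_{(a)}$ may be non-smooth, one must track the roots $a$, $2a$, $a/2$ and the relevant Weil-restriction data with care, and crucially one needs that $\pi'$ carries every root group of $G$ onto a nontrivial root group of $G'$ — which is exactly what makes the root-group and pseudo-parabolic equivalences of (ii) go through. A secondary point is to run all the "largest levitation" arguments over the possibly imperfect field $k$ rather than over $\ovl{k}$; here one uses that the Cartan subgroup $C_G(T)$ is pseudo-reductive, so has no nontrivial smooth connected unipotent $k$-subgroup, to exclude spurious enlargements.
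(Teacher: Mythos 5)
Your Part~(i) has a genuine gap at the step where you lift the component representatives of $H'$.

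You correctly reduce to producing \emph{some} smooth levitation $H_0$, and your construction of a levitation of $(H')^{\circ}$ — a subtorus of $T$ mapping onto $(H'\cap T')^{\circ}_{\red}$ plus the appropriate $T$-root groups — is essentially the paper's. The problem is the finitely many components. You write that ``the relevant normalisers in $G_{k'}$ surject onto those in $G'$,'' which indeed gives, for each coset representative $h\in H'(k')$, an element $g'\in N_G(T)(k')$ with $\pi'(k')(g')=h$. But these are $k'$-points of $G$, and ``the subgroup of $G$ generated by these lifts'' is then a $k'$-subgroup of $G_{k'}$, not a $k$-subgroup of $G$. Nothing in your sketch descends it to $k$. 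This is exactly where the paper has to work hardest: it first shows (over $k=k^{\sep}$) that $N_G(H)$ is \emph{smooth} — by observing that $M:=N_G(H)^{\Sm}$ is normalised by $N_G(H)$ and applying \cite[17.47]{Mi} to deduce $N_G(M)^{\circ}=M^{\circ}$, hence $N_G(H)$ itself is smooth — so that $N_G(H)/N_G(H)^{\circ}$ is a constant $k$-group whose $k$-points surject onto its $k'$-points, and only then can it replace $g'$ by $g=g'x'\in N_G(H)(k)$ for a suitable $x'\in N_G(H)^{\circ}(k')$. Your final paragraph flags the non-reduced root system and the ``imperfect $k$'' issues as the principal difficulty, but the real obstruction is this $k$-versus-$k'$ descent of component representatives, which you neither name nor resolve, and for which the smoothness of $N_G(H)$ is the non-obvious ingredient.

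On Part~(ii): modulo granting (i), your route is genuinely different from the paper's. You argue internally via the structure of $T$-stable smooth connected subgroups (root-group spanning, unipotence forced by the image landing in a torus or a single root group). The paper instead uses the $i_G$ map and Weil restriction: from Proposition~\ref{largestlevitation} and Lemma~\ref{adjunctionpropnew} it deduces $i_G(H)\subseteq R_{k'/k}(H')$, identifies $R_{k'/k}(T')$ as a Cartan subgroup (resp.\ $U_{\alpha_0}$ as the appropriate root group) of $R_{k'/k}(G')$, and then pulls back along $i_G$ using that $(\ker i_G)(k)$ is finite to pin down $H^{\circ}$. Your version is attractive but has rough edges that would need shoring up: $Z_{H^{\circ}}(T)$ need not be connected (only its identity component need be killed), and in the $BC_n$ case you must still decide which of $U_{\alpha}$, $U_{2\alpha}$ your ``root group $U_{(b)}$'' is, which your sketch leaves unresolved. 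The paper's Weil-restriction argument handles these automatically.
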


We finish the paper by considering maximal smooth subgroups of $G$ and $G'$.  It is not true in general that ${\mathcal S}$ takes maximal smooth subgroups of $G$ to maximal smooth subgroups of $G'$, or that a smooth levitation of a maximal smooth subgroup of $G'$ must be a maximal smooth subgroup of $G$: for instance, in Example~\ref{BC1torusnotlift} we show that a maximal smooth subgroup of $G'$ need not have a smooth levitation, while in Example~\ref{ex:non_maximal_levitation} we find a maximal smooth subgroup $H'$ of $G'$ such that $H'$ levitates smoothly but the largest smooth levitation of $H'$ is not maximal in $G$.  We do have the following result, which gives a step towards reducing the problem of describing maximal smooth subgroups from the case of an arbitrary pseudo-reductive group to the case of a reductive group.

\begin{thm}
\label{thm:nicetheorem}
 Let $G$ be a pseudo-reductive $k$-group.
\begin{itemize}
\item[(i)] Let $H$ be a maximal smooth subgroup of $G$. Suppose at least one of the following conditions hold: 
\begin{itemize} 
	\item[(a)] $G=R_{k'/k}(G')$; or 
	\item[(b)] $H$ is a regular subgroup of $G$. 
\end{itemize} 
Then either $\pi'(H_{k'})$ is a maximal smooth subgroup of $G'$, or $H_{k^{\sep}}$ contains an almost Levi subgroup of $G_{k^{\sep}}$.

\item[(ii)] Let $H'$ be a maximal smooth subgroup of $G'$. Suppose $H'$ smoothly levitates in $G$, and let $M$ be a smooth subgroup of $G$ that properly contains the largest smooth levitation of $H'$. Then $M_{k^{\sep}}$ contains an almost Levi subgroup of $G_{k^{\sep}}$.
\end{itemize}
\end{thm}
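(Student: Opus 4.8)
I would treat part~(ii) first, as it is quick, and then reduce part~(i) to a statement about connected reductive groups. For part~(ii): let $\wtilde{H}$ be the largest smooth levitation of $H'$ in $G$ (Proposition~\ref{largestlevitation}), and let $M$ be a smooth subgroup of $G$ with $\wtilde{H}\subsetneq M$. Set $M':=\pi'(M_{k'})$, a smooth subgroup of $G'$ containing $\pi'(\wtilde{H}_{k'})=H'$. Since $H'$ is a maximal smooth subgroup of $G'$, either $M'=H'$ or $M'=G'$. If $M'=H'$ then $M$ is a smooth levitation of $H'$, so $M\subseteq\wtilde{H}$ by Proposition~\ref{largestlevitation}, contradicting $\wtilde{H}\subsetneq M$; hence $M'=G'$. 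Base changing to $k^{\sep}$ (which transports $k'$, $G'$, $\pi'$ and the map $\ms$ to their analogues over $k^{\sep}$) and applying Theorem~\ref{thm:levitating}(i) over $k^{\sep}$, we conclude that $M_{k^{\sep}}$ contains an almost Levi subgroup of $G_{k^{\sep}}$.

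For part~(i), the same base-change remark handles the case $\pi'(H_{k'})=G'$ (then $H_{k^{\sep}}$ contains an almost Levi subgroup of $G_{k^{\sep}}$ by Theorem~\ref{thm:levitating}(i) over $k^{\sep}$), so we may assume $H':=\pi'(H_{k'})\subsetneq G'$, and the task is to show $H'$ is a maximal smooth subgroup of $G'$. First I would produce a largest smooth levitation $\wtilde{H}$ of $H'$ in $G$. Under hypothesis~(a), $\wtilde{H}=R_{k'/k}(H')$: it is smooth and levitates $H'$ (the counit $R_{k'/k}(H')_{k'}\to H'$ is surjective since $H'$ is smooth), and every smooth levitation $A$ of $H'$ satisfies $A\subseteq R_{k'/k}(H')$ because the adjoint of $A\into G=R_{k'/k}(G')$ under the Weil-restriction adjunction is $\pi'|_{A_{k'}}$, which factors through $H'=\pi'(A_{k'})$, forcing $A$ to factor through $R_{k'/k}(H')$. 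Under hypothesis~(b), $H'$ is normalised by the maximal torus $T':=\pi'(T_{k'})$ of $G'$, where $T$ is a maximal torus of $G$ with $T\leqs N_G(H)$, so $\wtilde{H}$ exists by Theorem~\ref{thm:levitating_regsubgroups}(i). In either case $H\subseteq\wtilde{H}$; as $H$ is a maximal smooth subgroup of $G$ and $\wtilde{H}=G$ would give $H'=\pi'(G_{k'})=G'$, we get $H=\wtilde{H}$, so $H$ is itself the largest smooth levitation of $H'$.

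The argument is then completed by a lifting lemma: if $N'$ is a smooth subgroup of $G'$ with $H'\subseteq N'$ that admits a largest smooth levitation $\wtilde{N}$ in $G$, then $H\subseteq\wtilde{N}\subsetneq G$. Indeed $\langle H,\wtilde{N}\rangle$ is a smooth subgroup of $G$ (the subgroup generated by smooth subgroups is smooth) with $\pi'\big(\langle H,\wtilde{N}\rangle_{k'}\big)=\langle\pi'(H_{k'}),\pi'(\wtilde{N}_{k'})\rangle=\langle H',N'\rangle=N'$, so $\langle H,\wtilde{N}\rangle\subseteq\wtilde{N}$ by maximality of $\wtilde{N}$ among smooth subgroups of $G$ with image $N'$; hence $H\subseteq\wtilde{N}$, and $\wtilde{N}\neq G$ because its image $N'$ is proper. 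Now suppose, for contradiction, that $H'$ is not a maximal smooth subgroup of $G'$. Under hypothesis~(a) every smooth subgroup of $G'$ has a largest smooth levitation, namely its Weil restriction back to $k$, so choosing any smooth $N'$ with $H'\subsetneq N'\subsetneq G'$ the lifting lemma and the maximality of $H$ give $H=R_{k'/k}(N')$, whence $H'=\pi'(H_{k'})=N'$ --- absurd. Under hypothesis~(b), Theorem~\ref{thm:levitating_regsubgroups}(i) supplies a largest smooth levitation for every $T'$-stable smooth subgroup of $G'$, so it suffices to find a $T'$-stable smooth $N'$ with $H'\subsetneq N'\subsetneq G'$ and then argue as before. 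Thus everything reduces to the following statement about connected reductive groups: \emph{a proper smooth subgroup $H'$ of a connected reductive group $G'$ that is normalised by a maximal torus $T'$ and is not a maximal smooth subgroup of $G'$ is properly contained in some proper $T'$-stable smooth subgroup of $G'$}.

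I expect this reductive-group statement to be the main obstacle. The plan for it is to invoke the structure of smooth subgroups of a connected reductive group normalised by a maximal torus: writing $(H')^{\circ}=(H'\cap T')\cdot\langle U'_a : a\in\Psi\rangle$ for a closed subsystem $\Psi$ of the root system of $(G',T')$, one enlarges $H'$ case by case --- inside $H'\cdot T'$ when $T'\not\subseteq H'$ (treating separately the case $H'\cdot T'=G'$, where $(H')^{\circ}$ is normal in $G'$ with torus quotient, so that one instead pulls back a larger smooth subgroup of the central torus $G'/\mathscr{D}(G')$); and, when $T'\subseteq H'$, either by enlarging $\Psi$ to a maximal proper closed subsystem (Borel--de Siebenthal), or by enlarging the finite component group $H'/(H')^{\circ}$ inside the largest smooth subgroup of the $T'$-stable normaliser $N_{G'}((H')^{\circ})$. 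The delicate points will be the small-characteristic phenomena --- the mismatch between root groups and weight spaces of the Lie algebra, and the possible non-smoothness of normalisers --- which must be navigated with care, and whose presence is precisely why hypotheses~(a) and~(b), rather than an unconditional statement, are imposed in part~(i).
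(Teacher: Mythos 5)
Parts~(ii) and~(i)(a) of your proposal are correct and essentially coincide with the paper's proof: in~(ii) you pass to $M':=\pi'(M_{k'})$ and use maximality of $H'$ together with the largest-levitation property, and in~(i)(a) you use $R_{k'/k}(N')$ as the largest smooth levitation supplied by Lemma~\ref{adjunctionprop}.

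There is a genuine gap in part~(i)(b). You reduce the question to the following statement, which you leave unproved and explicitly flag as ``the main obstacle'': a proper smooth $T'$-regular subgroup $H'$ of a connected reductive group $G'$ that is not maximal smooth must be properly contained in a proper \emph{$T'$-stable} smooth subgroup. The plan you sketch for this (Borel--de Siebenthal enlargement of the root subsystem, enlargement of the component group, special handling of the case $H'\cdot T'=G'$, etc.) is an outline rather than a proof, and you yourself note that small-characteristic pathologies --- mismatch between root groups and weight spaces, non-smooth normalisers --- would need to be navigated. As written, part~(i)(b) is not established.

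Moreover, this reduction is harder than it needs to be, because it forgets that $H$ is assumed \emph{maximal} in $G$. The paper's argument exploits maximality directly: since $T$ normalises $H$, the smooth subgroup $HT$ of $G$ is either $H$ or $G$. If $HT=H$, i.e.\ $T\subseteq H$, then $T'\subseteq H'\subseteq M'$ for any overgroup $M'$ of $H'$, so $M'$ is trivially $T'$-stable. If $HT=G$, then $H'T'=G'$ with $T'$ normalising $H'$, so $G'/H'$ is a quotient of the abelian group $T'/(T'\cap H')$ and hence $\mathscr{D}(G')\subseteq H'$; consequently every overgroup $M'\supseteq H'$ contains $\mathscr{D}(G')$ and is therefore normal in $G'$, in particular $T'$-stable. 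In both cases every smooth proper $M'\supseteq H'$ is automatically $T'$-stable, so Theorem~\ref{thm:levitating_regsubgroups}(i) applies to all of them and no structural analysis of regular subgroups of reductive groups is required. You should replace the unproved reductive-group statement with this dichotomy.
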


The paper is set out as follows.  We cover some preliminary material in Section~\ref{sec:prelim}, and in Section~\ref{sec:leviovergroups} we deal with the case $H'= G'$ and we prove Theorem~\ref{thm:levitating}.  The proof of Theorem~\ref{thm:levitating_crit} and various counter-examples appear in Section~\ref{sec:levitatingsection}.  In Section~\ref{sec:maximal} we consider maximal subgroups and we prove Theorem~\ref{thm:nicetheorem}.

\section{Preliminaries}
\label{sec:prelim}

%Let $k$ be a field. In this paper a $k$-group refers to a group object in the category of schemes over $k$, and a homomorphism refers to a morphism in this category that preserves the group operations. Let $\AlgGrp\hspace{-0.5mm}/k$ denote the category of finite type $k$-groups.  We do not assume that $k$-groups are smooth.  We denote by $\alpha_p$ and $\mu_p$ the first Frobenius kernels of the additive group $\Ga$ and the multiplicative group $\Gm$, respectively.

% We recall two scheme-theoretic facts. Firstly, all closed subschemes of a finite type scheme are also of finite type \cite[29.15.5, 29.15.3]{Stacks}. Secondly, all immersions between group schemes over a field are closed \cite[39.7.8]{Stacks}. In particular, any subgroup (scheme) of a finite type $k$-group is automatically closed and of finite type.

Let $k$ be a field. Let $\overline{k}$ be an algebraic closure of $k$, and let $k^{\sep}$ be the separable closure of $k$ in $\overline{k}$. By a \textit{$k$-group} we mean a group scheme over $k$, and by an \textit{algebraic $k$-group} we mean a group scheme of finite type over $k$. We denote by $\AlgGrp\hspace{-0.5mm}/k$ the category of algebraic $k$-groups and homomorphisms between them. We do not assume that algebraic $k$-groups are smooth. By a \textit{subgroup} $H$ of an algebraic $k$-group $G$ we mean a locally closed subgroup scheme; note that $H$ is automatically closed by \cite[047T]{Stacks} and is of finite type by \cite[01T5, 01T3]{Stacks}. In this section we allow for not-necessarily-affine algebraic $k$-groups, whilst for the remainder of the paper we are only concerned with affine groups. We denote by $\alpha_p$ and $\mu_p$ the first Frobenius kernels of the additive group $\Ga$ and the multiplicative group $\Gm$, respectively.

\subsection{Some structure theory}\label{structuretheory}

Let $G$ be an algebraic $k$-group. We denote the centre of $G$ by $Z(G)$. That is, for $A$ a $k$-algebra, $Z(A)$ represents the functor $$A \mapsto \{g \in G(A) \hspace{0.5mm}|\hspace{0.5mm} \textnormal{$g^{-1}xg=x$, for all  $A$-algebras $A'$ and all $x \in G(A')$}\}.$$ If $k'/k$ is a field extension then $G_{k'}$ denotes the algebraic $k'$-group obtained from $G$ by extension of scalars. Let $G_{\red}$ denote the reduction of $G$ (i.e. the unique reduced closed $k$-subscheme of $G$ with the same underlying topological space). Let $G^{\Sm}$ denote the subgroup of $G$ that is generated by $G(k^{\sep})$ (more precisely take the subgroup of $G_{k^{\sep}}$ that is generated by its $k^{\sep}$-points, observe that it is stable under the action of $\Gal(k^{\sep}/k)$, then take its $k$-descent). Then $G^{\Sm}$ is the largest smooth subgroup of $G$. Note that $G^{\Sm}$ is contained in $G_{\red}$ but it is not necessarily normal in $G$ (see \cite[Ex.\ 2.35]{Mi}, for example).
 
% Let $k'/k$ be an algebraic field extension. Consider the induced morphism of schemes $u:\Spec k' \to \Spec k$.
%The \emph{extension of scalars} of $G$ by $k'/k$, which we denote by $G_{k'}$, is the pullback of $u:\Spec k' \to \Spec k$ with the structure morphism $G \to \Spec k$ (this always exists as a scheme, and has a unique structure as a $k'$-group). Since pullbacks behave functorially, we have an induced functor \begin{equation}\label{extscalars} -_{k'}:\AlgGrp\hspace{-0.5mm}/k \to \AlgGrp\hspace{-0.5mm}/k' \end{equation} called \emph{extension of scalars} by $u$. 

Now assume that $G$ is a smooth connected affine $k$-group. The \emph{unipotent radical} $\mathscr{R}_u(G)$ of $G$ is the largest smooth connected unipotent normal subgroup of $G$. If $\mathscr{R}_u(G)$ is trivial then $G$ is \emph{pseudo-reductive}.  For a comprehensive account of pseudo-reductive groups and their properties, see \cite{CGP}. 

The notions of a root system and root groups are particularly important; see \cite[\S 2.3]{CGP}, where the necessary constructions are carried out for an arbitrary smooth connected affine $k$-group. The root system of a reductive group is always reduced, but a pseudo-reductive group can have a non-reduced root system --- see \cite[9.3]{CGP} --- and this leads to complications (see, e.g., the paragraph before Lemma~\ref{nonliftingtorus}). To clarify, we fix some maximal torus $T$ of $G$. By a \textit{root} (resp. \textit{root group}) of $G$ we mean a root (resp. root group) of $G_{k^{\sep}}$ with respect to $T_{k^{\sep}}$ which is stable under the action of $\Gal(k^{\sep}/k)$. Note that we are referring to the ``absolute" notions of root systems and root groups, which are not to be confused with the ``relative" notions in the sense of \cite[C.2.13]{CGP}.

The \emph{split unipotent radical} $\mathscr{R}_{us}(G)$ of $G$ is the largest smooth connected split unipotent normal subgroup of $G$. If $\mathscr{R}_{us}(G)$ is trivial then $G$ is \emph{quasi-reductive}. The group $G$ is called \emph{pseudo-split} if it contains a split maximal torus, and \emph{pseudo-simple} if it is non-commutative and has no nontrivial smooth connected proper normal subgroup.  We say that $G$ is \emph{absolutely pseudo-simple} if $G_{k^{\sep}}$ is pseudo-simple. There are natural notions of pseudo-semisimple groups and pseudo-simple components.

The geometric unipotent radical of $G$ is the group $\mathscr{R}_u(G_{\ovl{k}})$. If this group is trivial then we say that $G$ is \emph{reductive}.  Note our convention is that reductive and pseudo-reductive groups are connected.
 
  A subgroup $L$ of $G$ satisfying $\smash{G_{\ovl{k}}=\mathscr{R}_u(G_{\ovl{k}}) \rtimes L_{\ovl{k}}}$ is called a \emph{Levi subgroup} of $G$; Levi subgroups are reductive.  If $G$ is pseudo-split  and pseudo-reductive, or if $k$ has characteristic $0$, then $G$ admits a Levi subgroup (see \cite[3.4.6]{CGP} and \cite[25.49]{Mi}). However in general $G$ need not contain a Levi subgroup: this failure of existence may occur over any algebraically closed field of positive characteristic (see \cite[A.6]{CGP}), or if $G$ is pseudo-reductive but not pseudo-split (refer to \cite[7.2.2]{CGP}). We introduce a related notion.
\begin{defn}\label{defn:almostLevi}
A smooth subgroup $L$ of $G$ is an \emph{almost Levi subgroup} if $G(\ovl{k})=\mathscr{R}_u(G(\ovl{k})) \rtimes L(\ovl{k})$ (as abstract groups).
\end{defn}

 Fix a cocharacter $\lambda:\mathbb{G}_m \to G$. For any $k$-algebra $A$ and $g \in G(A)$, we say that $\Lim_{t \to 0} \lambda(t)g\lambda(t)^{-1}$ \textit{exists} if the $A$-scheme map $\Gm \to G_A$ defined by $t \mapsto \lambda(t)g\lambda(t)^{-1}$ extends (necessarily uniquely) to an $A$-scheme map $\mathbb{A}^1 \to G_A$. % Note that $t \in \Gm(A')=(A')^{\times}$ for any $A$-algebra $A'$, we initially said $t \in A^{\times}$, which is wrong -- Conrad pointed out.
The functor $$A \mapsto \{g\in G(A) \mid \Lim_{t \to 0} \lambda(t)g\lambda(t)^{-1} \textnormal{ exists}\}$$ is representable as a subgroup (scheme) of $G$, we denote it by $P_G(\lambda)$. A subgroup of $G$ is called \emph{pseudo-parabolic} if it is of form $P_G(\lambda)\mathscr{R}_u(G)$ for some cocharacter $\lambda:\mathbb{G}_m \to G$; so if $G$ is pseudo-reductive then each $P_G(\lambda)$ is pseudo-parabolic. Every pseudo-parabolic subgroup of $G$ is smooth and connected. A pseudo-parabolic subgroup of $G$ is called \emph{maximal pseudo-parabolic} if it is maximal amongst all proper pseudo-parabolic subgroups of $G$.

 Now specialise to the case where $G$ is a pseudo-split pseudo-reductive $k$-group. Let $T$ be a maximal torus of $G$ and let $\Phi:=\Phi(G,T)$ be the root system of $G$ with respect to $T$. 
 For any $\alpha \in \Phi$ there exists a unique $T$-stable smooth connected subgroup $U_{\alpha}$ of $G$ such that $\Lie(U_{\alpha})$ is the span of the root spaces in $\Lie(G)$ which correspond to multiples $n\alpha$ for $n \in \N$. We call $U_{\alpha}$ the $T$-\emph{root group} of $G$ associated to $\alpha$. 
 Without the pseudo-splitness assumption, a subgroup $H$ of $G$ is called a \emph{root group} if $H_{k^{\sep}}$ is a root group of $G_{k^{\sep}}$.

\subsection{Extension of scalars and Weil restriction}\label{theadjunction} 
Let $k'/k$ be a finite field extension.
%, and $u:\Spec k' \to \Spec k$ the induced morphism of schemes.
%Recall that we have defined above the extension of scalars induced by $u$, see Equation \eqref{extscalars}.
 Let $G'$ be an algebraic $k'$-group. The \emph{Weil restriction} of $G'$ by $k'/k$ is the functor $(\Alg\hspace{-0.5mm}/k)^{\op} \to \Grp$ given on objects by $A \mapsto \Hom_{k'}(\Spec A\otimes_k k',G') =:G'(A\otimes_k k')$, where $\Alg\hspace{-0.5mm}/k$ refers to the category of finite-dimensional commutative $k$-algebras, and $\Grp$ the category of (abstract) groups. This functor is representable as an algebraic $k$-group (see \cite[\S 7.6/4]{BLR} or \cite[A.5.1]{CGP}); let us denote it by $R_{k'/k}(G')$. We have an induced functor \begin{equation}\label{Weilrest}R_{k'/k}(-):\AlgGrp\hspace{-0.5mm}/k' \to \AlgGrp\hspace{-0.5mm}/k\end{equation} called \emph{Weil restriction} by $k'/k$.

Many useful properties of algebraic groups over a field, in particular affineness and smoothness, are preserved by both the extension of scalars functor and the Weil restriction functor. Both functors also preserve the property of being a monomorphism.

 It is well-known that the Weil restriction functor is right adjoint to the extension of scalars functor (see for instance \cite[Lem.\ 7.6/1]{BLR}). It follows that Weil restriction preserves pullbacks: in particular, it preserves preimages. We make explicit some other properties of this adjunction, given that we will use them repeatedly.

%  Let \begin{equation}\label{unitadjunction} j:\id_k \to R_{k'/k}(-_{k'}) \end{equation} be the unit of this adjunction, where $\id_k$ is the identity functor on $\AlgGrp\hspace{-0.5mm}/k$, and let \begin{equation}\label{counitadjunction} q:R_{k'/k}(-)_{k'} \to \id_{k'}\end{equation} be the counit of this adjunction, where $\id_{k'}$ is the identity functor on $\AlgGrp\hspace{-0.5mm}/k'$. By definition $j$ is a natural transformation of endofunctors on $\AlgGrp\hspace{-0.5mm}/k$, and $q$ is a natural transformation of endofunctors on $\AlgGrp\hspace{-0.5mm}/k'$.

For an algebraic $k$-group $G$ and an algebraic $k'$-group $G'$ there exists a natural bijection \begin{equation}\label{adjunctbij}\Hom_k(G,R_{k'/k}(G')) \xrightarrow{\sim} \Hom_{k'}(G_{k'},G').\end{equation} 
Let \begin{equation}\label{unitadjunctioncomp} j_G:G \to R_{k'/k}(G_{k'}) \end{equation} be the component at $G$ of the unit of this adjunction: that is, $j_G$ is the map corresponding to $G_{k'}\stackrel{\rm id}{\to} G_{k'}$ via (\ref{adjunctbij}). Given any homomorphism of $k$-groups $f:H\to G$, the following diagram commutes: \begin{equation}\label{functorialityofjG}\begin{tikzpicture}[every node/.style={midway},baseline=(current  bounding  box.center)]
\matrix[column sep={8.5em,between origins},
        row sep={3.7em}] at (0,0)
{ \node(A) {$H$}  ; & \node(B) {$R_{k'/k}(H_{k'})$}; \\
  \node(C) {$G$}; & \node(D) {$R_{k'/k}(G_{k'})$}; \\};
\draw[<-] (C) -- (A) node[anchor=east]  {$f$};
\draw[->] (B) -- (D) node[anchor=west]  {$R_{k'/k}(f_{k'})$};
\draw[->] (A)   -- (B) node[anchor=south] {$j_H$};
\draw[->] (C)   -- (D) node[anchor=south] {$j_G$};
\end{tikzpicture}\hspace{-1.3cm}\end{equation} Given a $k'$-homomorphism $\psi':G_{k'} \to G'$, let $\psi:G \to R_{k'/k}(G')$ be the $k$-homomorphism associated to $\psi'$ under the bijection in $(\ref{adjunctbij})$. The universal property of $j_G$ says that the following triangle commutes: \begin{equation}\label{universalpropertyofjG}\begin{tikzcd}[row sep={4.5em,between origins},column sep={6.8em,between origins},]
G \arrow[r, "j_G"] \arrow[dr, "\psi"]
& R_{k'/k}(G_{k'}) \arrow[d, "R_{k'/k}(\psi')"]\\
& R_{k'/k}(G') \end{tikzcd}\end{equation} 
Now let \begin{equation}\label{counitadjunctioncomp} q_{G'}:R_{k'/k}(G')_{k'} \to G' \end{equation} be the component at $G'$ of the counit of this adjunction: that is, $q_{G'}$ is the map corresponding to $R_{k'/k}(G)\stackrel{\rm id}{\to} R_{k'/k}(G)$ via (\ref{adjunctbij}).  If $G'$ is smooth then \cite[A.5.11(1)]{CGP} tells us that $q_{G'}:G_{k'} \to G'$ is smooth and surjective.  Given any homomorphism of algebraic $k'$-groups $f':H' \to G'$, the following diagram commutes: \begin{equation}\label{functorialityofqG'}\hspace{-1.5cm}\begin{tikzpicture}[every node/.style={midway},baseline=(current  bounding  box.center)]
\matrix[column sep={8.5em,between origins},
        row sep={3.7em}] at (0,0)
{ \node(A) {$R_{k'/k}(H')_{k'}$}  ; & \node(B) {$H'$}; \\
  \node(C) {$R_{k'/k}(G')_{k'}$}; & \node(D) {$G'$}; \\};
\draw[<-] (C) -- (A) node[anchor=east]  {$R_{k'/k}(f')_{k'}$};
\draw[->] (B) -- (D) node[anchor=west]  {$f'$};
\draw[->] (A)   -- (B) node[anchor=south] {$q_{H'}$};
\draw[->] (C)   -- (D) node[anchor=south] {$q_{G'}$};
\end{tikzpicture}\end{equation} Given a $k$-homomorphism $\phi:G \to R_{k'/k}(G')$, let $\phi':G_{k'} \to G'$ be the $k'$-homomorphism associated to $\phi$ under the bijection in $(\ref{adjunctbij})$. The universal property of $q_{G'}$ says that the following triangle commutes: \begin{equation}\label{universalpropertyofqG'}\begin{tikzcd}[row sep={4.5em,between origins},column sep={6.8em,between origins},]
G_{k'} \arrow[r, "\phi_{k'}"] \arrow[dr, "\phi'"]
& R_{k'/k}(G')_{k'} \arrow[d, "q_{G'}"]\\
& G' \end{tikzcd}\end{equation}

The following two equations are respectively known as the first and second counit-unit equations of the adjunction; they follow from the universal properties of $j_G$ and $q_{G'}$. If $G_{k'}=G'$ then \begin{equation}\label{firstcounitunitequation} q_{G'} \circ (j_G)_{k'} = \id_{G'}.\end{equation} If $G=R_{k'/k}(G')$ then \begin{equation}\label{secondcounitunitequation} R_{k'/k}(q_{G'}) \circ j_G = \id_G.\end{equation}

 In this paper we are mostly concerned with the following special situation. Let $G$ be a pseudo-reductive $k$-group, and let $k'/k$ be the minimal field of definition for its geometric unipotent radical $\mathscr{R}_u(\smash{G_{\ovl{k}}})$. Let $\pi':G_{k'} \to G_{k'}/\mathscr{R}_u(G_{k'})=:G'$ be the natural projection. We define a map \begin{equation}\label{iGmap} i_G:G \to R_{k'/k}(G')\end{equation} to be the $k$-homomorphism associated to $\pi'$ under the bijection in $(\ref{adjunctbij})$.  Note that $\ker i_G$ is unipotent, so $(\ker i_G)^{\Sm}$ is \'etale (see \cite[1.6 and Lem.\ 1.2.1]{CGP}), and $\ker i_G$ is central if the root system of $G_{k^{\sep}}$ is reduced \cite[2.3.4]{CP}.  Applying (\ref{universalpropertyofqG'}) with $\phi= i_G$ gives
 \begin{equation}
 \label{pi'_eqn}
  q_{G'}\circ (i_G)_{k'}= \pi'.
 \end{equation}
 
 In the special case when $G= R_{k'/k}(G')$, $i_G$ gives an isomorphism from $G$ onto itself \cite[Thm.\ 1.6.2(2)]{CGP}. More generally, if $G$ is a subgroup of $R_{k'/k}(G')$ then we may regard $i_G$ as the inclusion map.
 
 We wish to study the relationship between subgroups of $G'$ and subgroups of $G$, which motivates the following terminology.

\begin{defn}
%\label{defn:levitation} 
Suppose $G$ is pseudo-reductive and $\pi':G_{k'} \to G'$ is the map just defined. 
Let $H'$ be a subgroup of $G'$. A subgroup $H$ of $G$ satisfying $\pi'(H_{k'})=H'$ is called a \emph{levitation} of $H'$ in $G$; we say that $H'$ \emph{levitates} to $H$ in $G$. If such a subgroup $H$ exists then we say that $H'$ \emph{levitates} or {\em is levitating} in $G$. If, in addition, $H$ (and hence $H'$) is smooth then $H$ is called a \emph{smooth levitation} of $H'$ in $G$, and we say that $H'$ \emph{smoothly levitates} or is \emph{smoothly levitating} (to $H$) in $G$.
\end{defn}

\subsection{Subgroups of a Weil restriction}\label{subgroupsweilres} 

In this subsection we prove two useful lemmas which relate the subgroup structure of a smooth algebraic $k'$-group $G'$ to that of its Weil restriction $R_{k'/k}(G')$.
Keep the assumption that $k'/k$ is a finite field extension. Let $G'$ be a smooth algebraic $k'$-group (not necessarily affine), and let $G:=R_{k'/k}(G')$. If $G'$ is reductive then $G$ is pseudo-reductive \cite[Prop.\ 1.1.10]{CGP}, if in addition $k'/k$ is purely inseparable then $k'/k$ is the minimal field of definition for $\mathscr{R}_u(G_{\ovl{k}})$ and $i_G\colon G\to G$ is an isomorphism \cite[Thm.\ 1.6.2(2)]{CGP}.
%, and by (\ref{secondcounitunitequation}) we may identify the map $\pi':G_{k'} \to G_{k'}/\mathscr{R}_u(G_{k'})= G'$ with $q_{G'}$; it follows that $i_G= {\rm id}_G$.
Note that most pseudo-reductive groups are obtained from a group of the form $R_{k'/k}(G')$ via the so-called standard construction \cite[Thm.\ 1.5.1]{CGP}.
 
\begin{lem}\label{adjunctionprop} Let $H'$ be a subgroup of $G'$. There is a canonical inclusion $q_{G'}(R_{k'/k}(H')_{k'}) \subseteq H'$, with equality if $H'$ is smooth. Consequently, if $H'$ is smooth and $R_{k'/k}(H') = R_{k'/k}(G')$ then $H' = G'$. % The converse is obvious since functors preserve identity maps.
\begin{proof} Let $\phi':H' \hookrightarrow G'$ be the inclusion. We may interpret $\smash{R_{k'/k}(\phi')_{k'}}$ as an inclusion. Consider the $k'$-homomorphism $$\nu':=q_{G'}\circ R_{k'/k}(\phi')_{k'}:R_{k'/k}(H')_{k'} \to G'.$$ That is, $\nu'$ is the restriction of $q_{G'}$ to $R_{k'/k}(H')_{k'}$. By the functoriality of $q_{G'}$ (i.e., Diagram (\ref{functorialityofqG'})), we have that $$\phi' \circ q_{H'}=q_{G'}\circ R_{k'/k}(\phi')_{k'}=\nu'.$$ Taking the image of this map $\nu'$ and leaving the inclusion maps as implicit gives us a canonical inclusion $q_{G'}(R_{k'/k}(H')_{k'}) \subseteq H'$.  If $H'$ is smooth then $q_{H'}$ is smooth and surjective, so indeed $q_{G'}(R_{k'/k}(H')_{k'})=H'$. The second assertion then follows immediately.
\end{proof}
\end{lem}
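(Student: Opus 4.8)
The plan is to read off everything from the naturality of the counit $q$ of the extension-of-scalars/Weil-restriction adjunction, that is, from the commutative square (\ref{functorialityofqG'}) applied to the inclusion $\phi'\colon H'\hookrightarrow G'$. First I would record that, since both Weil restriction and extension of scalars preserve monomorphisms (see Subsection~\ref{theadjunction}), the homomorphism $R_{k'/k}(\phi')_{k'}\colon R_{k'/k}(H')_{k'}\to R_{k'/k}(G')_{k'}$ is a monomorphism; viewing it as an inclusion realises $R_{k'/k}(H')_{k'}$ as a subgroup of $R_{k'/k}(G')_{k'}$, so that $q_{G'}(R_{k'/k}(H')_{k'})$ literally means the image of the restriction $\nu':=q_{G'}\circ R_{k'/k}(\phi')_{k'}$.

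Next I would instantiate (\ref{functorialityofqG'}) with $f'=\phi'$ to get $\phi'\circ q_{H'}=q_{G'}\circ R_{k'/k}(\phi')_{k'}=\nu'$. Since the (scheme-theoretic) image of $\nu'$ factors through the image of $\phi'$, which is $H'$, this already gives the canonical inclusion $q_{G'}(R_{k'/k}(H')_{k'})\subseteq H'$ for an arbitrary subgroup $H'$. If moreover $H'$ is smooth, then $q_{H'}\colon R_{k'/k}(H')_{k'}\to H'$ is smooth and surjective by \cite[A.5.11(1)]{CGP}, so the image of $\phi'\circ q_{H'}$ equals the image of $\phi'$, and hence $q_{G'}(R_{k'/k}(H')_{k'})=H'$.

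For the final assertion I would simply apply the equality statement twice. Taking $H'=G'$ gives $q_{G'}(R_{k'/k}(G')_{k'})=G'$ (indeed $q_{G'}$ is already surjective because $G'$ is smooth). On the other hand, if $R_{k'/k}(H')=R_{k'/k}(G')$, then $R_{k'/k}(\phi')$, and hence $R_{k'/k}(\phi')_{k'}$, is an isomorphism, so $R_{k'/k}(H')_{k'}=R_{k'/k}(G')_{k'}$; combining with the equality case for $H'$ yields $H'=q_{G'}(R_{k'/k}(H')_{k'})=q_{G'}(R_{k'/k}(G')_{k'})=G'$.

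There is no real obstacle here; the only step that needs a little care is the bookkeeping with scheme-theoretic images — concretely, justifying that the image of a composite $\phi'\circ q_{H'}$ with $q_{H'}$ faithfully flat and surjective coincides with the image of $\phi'$, and that all the objects in play are genuine subgroup schemes (automatic, since the groups are of finite type over a field). One should also keep in mind that $q_{G'}$ lives over $k'$ (it is a $k'$-homomorphism $R_{k'/k}(G')_{k'}\to G'$), so all of the image computations take place over $k'$ rather than over $k$.
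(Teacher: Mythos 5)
Your proposal is correct and follows the paper's proof essentially verbatim: both rely on the naturality square (\ref{functorialityofqG'}) applied to the inclusion $\phi'\colon H'\hookrightarrow G'$, deduce the containment by factoring $\nu'$ through $\phi'$, and upgrade to equality using the surjectivity of $q_{H'}$ for smooth $H'$. Your added remarks — that both functors preserve monomorphisms (justifying the inclusion of $R_{k'/k}(H')_{k'}$ into $R_{k'/k}(G')_{k'}$) and the explicit unpacking of the final "consequently" step — are minor elaborations of points the paper leaves implicit, not a different argument.
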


 The inclusion $q_{G'}(R_{k'/k}(H')_{k'}) \subseteq H'$ in Lemma \ref{adjunctionprop} may be strict if $H'$ is not smooth; for an example with reductive $G'$ see Example \ref{ex:nonsmooth_nonlev}.

\begin{lem}\label{adjunctionpropnew} Let $H$ be a subgroup of $G$. Then there are canonical inclusions $H \subseteq R_{k'/k}(q_{G'}(H_{k'})) \subseteq G$.
\begin{proof} Consider the subgroup $H':=q_{G'}(H_{k'})$ of $G'$. Let $\rho':H_{k'} \to H'$ be the $k'$-homomorphism obtained by restricting the domain of $q_{G'}$ to $H_{k'}$, and then restricting the codomain to its image $H'$. Consider the $k$-homomorphism $$\iota:H \to R_{k'/k}(H')$$ associated to $\rho'$ under the bijection in $(\ref{adjunctbij})$.

 Let $\phi:H \hookrightarrow G$ be the inclusion, and let $\phi':H' \hookrightarrow G'$ be the inclusion induced by $\phi$. By definition $\phi' \circ \rho'=q_{G'} \circ \phi_{k'}$. The universal property of $j_H$ (i.e., Diagram $(\ref{universalpropertyofjG})$) says that $\iota=R_{k'/k}(\rho') \circ j_H$. Recall from Equation $(\ref{secondcounitunitequation})$ that $R_{k'/k}(q_{G'}) \circ j_G$ is the identity map on $G$. By the functoriality of $j_G$ (i.e., Diagram $(\ref{functorialityofjG})$), we have that $R_{k'/k}(\phi_{k'}) \circ j_H = j_G \circ \phi$. Combining all of the above gives us $$R_{k'/k}(\phi') \circ \iota=\phi.$$ Since $\phi$ is a monomorphism, this is also true of $\iota$. Leaving the inclusions as implicit, we have shown that $H \subseteq R_{k'/k}(H') \subseteq G$.
\end{proof}
\end{lem}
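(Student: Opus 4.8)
The plan is to run exactly the same adjunction bookkeeping as in the proof of Lemma~\ref{adjunctionprop}, but with the roles of unit and counit swapped. First I would set $H' := q_{G'}(H_{k'})$, a subgroup of $G'$ obtained by taking the scheme-theoretic image of $H_{k'}$ under the counit $q_{G'}\colon R_{k'/k}(G')_{k'}\to G'$. Factoring this map through its image gives a $k'$-homomorphism $\rho'\colon H_{k'}\to H'$ with $\phi'\circ\rho' = q_{G'}\circ\phi_{k'}$, where $\phi\colon H\hookrightarrow G$ and $\phi'\colon H'\hookrightarrow G'$ are the inclusions. Applying the bijection $(\ref{adjunctbij})$ to $\rho'$ produces a $k$-homomorphism $\iota\colon H\to R_{k'/k}(H')$, and the universal property of the unit $j_H$, namely Diagram $(\ref{universalpropertyofjG})$, identifies $\iota$ with $R_{k'/k}(\rho')\circ j_H$.

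Next I would chase the resulting square. The functoriality of $j$, Diagram $(\ref{functorialityofjG})$ applied to $\phi$, gives $R_{k'/k}(\phi_{k'})\circ j_H = j_G\circ\phi$. Composing with $R_{k'/k}(q_{G'})$ on the left and using that $R_{k'/k}(\phi')\circ R_{k'/k}(\rho') = R_{k'/k}(\phi'\circ\rho') = R_{k'/k}(q_{G'}\circ\phi_{k'}) = R_{k'/k}(q_{G'})\circ R_{k'/k}(\phi_{k'})$ together with the second counit–unit equation $(\ref{secondcounitunitequation})$, namely $R_{k'/k}(q_{G'})\circ j_G = \id_G$, should yield $R_{k'/k}(\phi')\circ\iota = \phi$. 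Since $\phi$ is a monomorphism, so is $\iota$ (a left factor of a monomorphism is a monomorphism), and since Weil restriction preserves monomorphisms, $R_{k'/k}(\phi')$ is one too; leaving the monomorphisms $\iota$ and $R_{k'/k}(\phi')$ implicit as inclusions gives the chain $H\subseteq R_{k'/k}(H')\subseteq G$.

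The only genuinely delicate point is the well-definedness of $H'$ as a \emph{subgroup} of $G'$ — i.e. that the scheme-theoretic image $q_{G'}(H_{k'})$ is a locally closed (hence closed, of finite type) subgroup scheme so that everything lives in the category where the stated lemmas apply; this is standard for homomorphisms of algebraic groups over a field. Beyond that the argument is a formal diagram chase, dual to Lemma~\ref{adjunctionprop}, so I expect no serious obstacle. One bookkeeping subtlety worth double-checking: in identifying $\iota$ with $R_{k'/k}(\rho')\circ j_H$ one must make sure the image of $\iota$ under $(\ref{adjunctbij})$ really is $\rho'$ and not merely a map agreeing with it after post-composition with an inclusion — but since $\rho'$ was defined with codomain exactly $H'$ and the adjunction bijection is natural in both variables, this is automatic.
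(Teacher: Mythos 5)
Your argument is exactly the paper's: define $H' = q_{G'}(H_{k'})$, adjoint the corestriction $\rho'$ to get $\iota\colon H \to R_{k'/k}(H')$, then use the universal property of $j_H$, functoriality of $j$, and the counit–unit identity $R_{k'/k}(q_{G'})\circ j_G = \id_G$ to derive $R_{k'/k}(\phi')\circ\iota = \phi$ and conclude $\iota$ is a monomorphism. The proposal is correct and matches the paper's proof in every essential step, with only some added expository remarks on functoriality of $R_{k'/k}$ and well-definedness of the image.
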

%  Lemmas \ref{adjunctionprop} and \ref{adjunctionpropnew} are sufficient for our purposes, however we note that they both hold in far greater generality, with almost identical proofs.

\section{Overgroups of almost Levi subgroups}
\label{sec:leviovergroups}

 In this section we prove Theorem~\ref{thm:levitating}, and then we illustrate it with some examples. We will need the following easy lemmas about reductive groups.

\begin{lem}\label{unipmap} 
Let $G$ be a reductive $k$-group. Let $N$ be a unipotent normal subgroup of $G$. Then $N$ is infinitesimal.
\end{lem}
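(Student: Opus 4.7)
The plan is to reduce to $\overline{k}$ and then peel off the smooth and étale parts of $N$ in two stages, using reductivity to rule each one out in turn. Being infinitesimal is an fpqc-local property, so it suffices to show $N_{\overline{k}}$ is infinitesimal; equivalently, that its underlying topological space is a single point.

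First I would form the closed subscheme $E := (N_{\overline{k}})_{\mathrm{red}}$. Since $\overline{k}$ is perfect, $E$ is a smooth closed subgroup scheme of $N_{\overline{k}}$, hence smooth, unipotent, and (since reduction and taking connected components are canonical, and $G(\overline{k})$ acts on $N_{\overline{k}}$ by conjugation preserving its underlying scheme) normal in $G_{\overline{k}}$. The identity component $E^{\circ}$ is then a smooth, connected, unipotent, normal subgroup of the reductive group $G_{\overline{k}}$, so $E^{\circ} \subseteq \mathscr{R}_u(G_{\overline{k}}) = 1$. Hence $E$ is finite étale over $\overline{k}$.

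Next I would argue $E$ must be trivial. Because $G_{\overline{k}}$ is connected and $E$ is finite étale and normal, the conjugation action of $G_{\overline{k}}$ on $E$ is trivial (a connected scheme can only map constantly to an étale scheme), so $E \subseteq Z(G_{\overline{k}})$. Now invoke the standard fact that the scheme-theoretic centre of a reductive group is of multiplicative type. Thus $E$ is simultaneously unipotent (as a subgroup of $N_{\overline{k}}$) and of multiplicative type, and any such group scheme is trivial. Therefore $(N_{\overline{k}})_{\mathrm{red}} = 1$, which means $N_{\overline{k}}$ is concentrated at the identity, i.e.\ infinitesimal, and so $N$ itself is infinitesimal.

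There is no real obstacle here; the only thing to be a little careful about is the justification that $E$ is genuinely a normal subgroup scheme of $G_{\overline{k}}$ (and not just a subgroup on $\overline{k}$-points), which is why I explicitly invoke perfectness of $\overline{k}$ for the reduction to be a subgroup, and density of $G(\overline{k})$ in $G_{\overline{k}}$ together with functoriality of the conjugation action for normality. The two "trivially unipotent" inputs — no smooth connected unipotent normal subgroup, and no unipotent subgroup of a multiplicative-type centre — together cleanly kill $N$.
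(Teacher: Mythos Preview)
Your proof is correct and follows essentially the same approach as the paper. The paper works with the subgroup of $G_{\overline{k}}$ generated by $N(\overline{k})$, which coincides with your $E=(N_{\overline{k}})_{\mathrm{red}}$; both arguments then kill the connected part using $\mathscr{R}_u(G_{\overline{k}})=1$ and the finite \'etale remainder by observing it is central and hence of multiplicative type, forcing it to be trivial.
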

\begin{proof} 
Since $G$ is reductive, the subgroup of $\smash{G_{\overline{k}}}$ generated by $N(\overline{k})$ is finite, % As it is smooth.
and so it is central in $G$. % Every normal \'etale subgroup of $G$ is central by \cite[Rem. 12.39(a)]{Mi}
But $Z(G)$ is of multiplicative type as $G$ is reductive, hence $N(\overline{k})$ is trivial. That is, $N$ is infinitesimal.
\end{proof}

% The following lemma just says that a homomorphism of reductive $k$-groups is an isomorphism if and only if it restricts to an isomorphism between corresponding pairs of simple components and between maximal tori. The analogue for pseudo-reductive groups fails, as you might have a Cartan which acts on a pseudo-simple component in two different ways.
\begin{lem}\label{homisiso} Let $f:G_1 \to G_2$ be an isogeny between reductive $k$-groups. Suppose $f$ restricts to an isomorphism between each corresponding pair of simple components and between maximal tori. Then $f$ is an isomorphism.
\end{lem}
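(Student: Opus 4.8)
The plan is to reduce to showing that the scheme-theoretic kernel $N:=\ker f$ is trivial: since a surjective homomorphism of algebraic $k$-groups with trivial kernel is an isomorphism, and an isogeny is surjective with finite kernel, it suffices to prove $N=1$. Fix a maximal torus $T_1$ of $G_1$; then $T_2:=f(T_1)$ is a maximal torus of $G_2$, and by hypothesis $f$ restricts to an isomorphism $T_1\xrightarrow{\sim}T_2$ and to an isomorphism $H_1^{(i)}\xrightarrow{\sim}H_2^{(i)}$ on each corresponding pair of simple components. In particular $N$ is finite with $N\cap T_1=\ker(f|_{T_1})=1$ and $N\cap H_1^{(i)}=\ker(f|_{H_1^{(i)}})=1$ for every $i$.

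I would first show that $\Lie(N)=0$, which since $N$ is finite means that $N$ is \'etale. This can be checked after base change to $\overline{k}$, where $T_1$ is split and $\Lie(G_1)=\Lie(T_1)\oplus\bigoplus_{\alpha}\mathfrak{g}_{1,\alpha}$, the sum being over the roots of $(G_1,T_1)$ and each root space one-dimensional. As $N$ is normal in $G_1$, its Lie algebra $\Lie(N)$ is a $T_1$-stable ideal of $\Lie(G_1)$, hence a direct sum of weight spaces: $\Lie(N)=(\Lie(N)\cap\Lie(T_1))\oplus\bigoplus_{\alpha}(\Lie(N)\cap\mathfrak{g}_{1,\alpha})$. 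Here $\Lie(N)\cap\Lie(T_1)=\Lie(N\cap T_1)=0$; and for each root $\alpha$ the root group $U_{1,\alpha}$ lies in a single simple component $H_1^{(i)}$, so $\mathfrak{g}_{1,\alpha}=\Lie(U_{1,\alpha})\subseteq\Lie(H_1^{(i)})$ and therefore $\Lie(N)\cap\mathfrak{g}_{1,\alpha}\subseteq\Lie(N\cap H_1^{(i)})=0$. Hence $\Lie(N)=0$ and $N$ is \'etale.

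Next I would show that the \'etale group $N$ has no nontrivial geometric points. Arguing as in the proof of Lemma~\ref{unipmap}, the finite normal subgroup $N(\overline{k})$ of the connected group $G_1(\overline{k})$ is central, so $N(\overline{k})\subseteq Z(G_1)(\overline{k})\subseteq T_1(\overline{k})$, the last inclusion because the centre of a connected reductive group lies in every maximal torus. Since $N\cap T_1=1$ this forces $N(\overline{k})=1$, so the \'etale group $N$ is trivial, and hence $f$ is an isomorphism.

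The step I expect to be most delicate is the structure-theoretic input in the second paragraph: one needs that the root system of $(G_1,T_1)$ is the disjoint union of the root systems of the simple components of $G_1$ with respect to the tori $T_1\cap H_1^{(i)}$, so that each root group $U_{1,\alpha}$ does sit in a single simple component, on which $f$ restricts to an isomorphism. Together with the compatibility of $\Lie(-)$ with scheme-theoretic intersection and the weight-space decomposition of a $T_1$-stable subspace, this carries the argument; everything else is standard. (Both hypotheses enter the argument: the one on maximal tori pins down the \'etale part of $N$ and its component along $\Lie(T_1)$, while the one on simple components pins down its components along the root spaces.)
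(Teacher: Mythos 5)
Your proof is correct, but it takes a genuinely different route from the paper's. The paper argues purely group-theoretically: since $Z(G_1)$ is of multiplicative type it lies in every maximal torus, so $\ker f\cap Z(G_1)=1$; and since $\ker f$ and each simple factor $H_1^{(i)}$ are normal with $\ker f\cap H_1^{(i)}=1$, the commutator $[\ker f,H_1^{(i)}]$ lands in this trivial intersection, so $\ker f$ centralises every simple factor and the central torus and hence lies in $Z(G_1)$ --- a contradiction. Your proof instead first pins down $\Lie(\ker f)$ via the $T_1$-weight-space decomposition of $\Lie(G_1)$ (zero weight space handled by the maximal-torus hypothesis, root spaces by the simple-component hypothesis), concluding that $\ker f$ is \'etale, and only then invokes the centrality of smooth finite normal subgroups of connected groups to finish. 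Both are valid; the paper's is shorter and avoids Lie algebras entirely, while yours is more explicit about exactly how the two hypotheses kill the infinitesimal part of the kernel, and in particular spells out the point the paper compresses into the phrase ``non-central diagonally embedded subgroup.'' One minor stylistic note: once you know $\ker f$ is \'etale, you could equally well run the paper's commutator argument on $\ker f(\ovl{k})$ rather than appealing to centrality of finite normal subgroups --- the two closing steps are essentially interchangeable.
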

\begin{proof} Since $G_1$ is reductive, its center $Z(G_1)$ is of multiplicative type. Hence $Z(G_1) \cap\ker f$ is trivial, as it is contained in every maximal torus of $G_1$. Assume $\ker f$ is non-trivial. Then by assumption $\ker f$ must be a non-central diagonally embedded subgroup of at least two of the simple factors of $G_1$. However these simple factors are all pairwise commuting, so $\ker f$ centralises all of them, so it must be central. We have a contradiction. % Also see proof of \cite[Prop.\ 3.1.8]{CGP}. [N.B. \cite[Prop.\ 3.1.8]{CGP} is false as stated, but becomes true if in the statement of the result one replaces ``smooth connected affine $k$-group" with ``pseudo-reductive $k$-group". This issue has been discussed with the authors of \cite{CGP}.]}
\end{proof}

Henceforth let $G$ be a pseudo-reductive $k$-group, let $k'/k$ be the minimal field of definition for $\mathscr{R}_u(\smash{G_{\ovl{k}}})$, and let $\pi':G_{k'} \to G_{k'}/\mathscr{R}_u(G_{k'})=:G'$ be the natural projection. Recall that a smooth subgroup $L$ of $G$ is an almost Levi subgroup if $G(\ovl{k})=\mathscr{R}_u(G(\ovl{k})) \rtimes L(\ovl{k})$  (Definition \ref{defn:almostLevi}). The following result shows that this notion is only slightly more general than that of a Levi subgroup.

\begin{prop}\label{almostLevichar} 
Let $L$ be an almost Levi subgroup of $G$. 
Then $L$ is reductive, and $\pi'$ restricts to an isomorphism $Z(L_{k'})^{\circ} \to Z(G')^{\circ}$. 
Let $S$ be a pseudo-simple component of $G_{k^{\sep}}$. 
Then either 
\begin{itemize}
\item[(i)] $L_{k^{\sep}} \cap S$ is a Levi subgroup of $S$, or 
\item[(ii)] $\Char(k)=2$ and there exists $n:=n(S) \geq 1$ such that $S$ is of type $BC_n$, $L_{k^{\sep}} \cap S \cong \mathrm{SO}_{2n+1}$ and 
$\smash{L_{\ovl{k}} \cap S_{\ovl{k}} \cap \ker \pi'_{\ovl{k}} \cong (\alpha_2)^{2n}}$.
\end{itemize} 
In particular, if the root system of $G_{k^{\sep}}$ is reduced -- for instance if $\Char(k)\neq 2$ -- then $L$ is a Levi subgroup of $G$.
\end{prop}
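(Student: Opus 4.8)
\medskip
\noindent\textbf{Proof proposal.}
The plan is to study the restricted projection $\rho:=\pi'|_{L_{k'}}\colon L_{k'}\to G'$. First I would show it is an isogeny onto $G'$ whose kernel is unipotent and infinitesimal; this gives at once that $L$ is reductive and that $\pi'$ induces an isomorphism on radical tori. Then, passing to $k^{\sep}$, I would analyse $\rho$ one pseudo-simple component at a time: for a component $S$ with reduced root system a short comparison of root data forces the kernel of $\rho$ to vanish on the corresponding factor, so that $L$ meets $S$ in a genuine Levi subgroup, while the remaining possibility --- $\Char(k)=2$ and $S$ of type $BC_n$ --- is the only genuinely hard case, where one must read off from the structure theory of \cite{CGP} that $L\cap S\cong\mathrm{SO}_{2n+1}$ maps into $\bar S\cong\mathrm{Sp}_{2n}$ by the special isogeny of kernel $(\alpha_2)^{2n}$.

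\emph{Reductivity and the centre.} The kernel $\ker\rho=L_{k'}\cap\mathscr{R}_u(G_{k'})$ is a subgroup of a unipotent group, hence unipotent, and satisfies $(\ker\rho)(\ovl{k})=L(\ovl{k})\cap\mathscr{R}_u(G(\ovl{k}))=\{1\}$ by Definition~\ref{defn:almostLevi}; a unipotent $\ovl{k}$-group with no nontrivial $\ovl{k}$-point has trivial reduction, so $\ker\rho$ is infinitesimal. The image $\rho(L_{k'})$ is a smooth subgroup of the reductive group $G'$ whose base change to $\ovl{k}$ contains the Zariski-dense subgroup $G'(\ovl{k})$ (again by Definition~\ref{defn:almostLevi}), so $\rho(L_{k'})=G'$; thus $\rho$ is a surjection with finite kernel, and hence $\dim L=\dim G'$ and $\rho$ is injective on $\ovl{k}$-points. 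Then $\rho((L^{\circ})_{k'})$ is a closed connected subgroup of $G'$ of finite index on $\ovl{k}$-points, so it equals $G'$, and injectivity on $\ovl{k}$-points forces $L=L^{\circ}$; so $\rho$ is an isogeny. Now $\rho(\mathscr{R}_u(L_{k'}))$ is a smooth connected unipotent normal subgroup of $G'$, hence trivial by Lemma~\ref{unipmap}, so the smooth connected group $\mathscr{R}_u(L_{k'})$, being contained in the finite group $\ker\rho$, is trivial; thus $L_{k'}$, and so $L$, is reductive. Finally $\rho$, being an isogeny of reductive groups, carries the radical $Z(L_{k'})^{\circ}$ onto $Z(G')^{\circ}$, while $\ker\rho\cap Z(L_{k'})^{\circ}$ is a unipotent subgroup of a torus, hence trivial; a surjective homomorphism of tori with trivial kernel is an isomorphism, so $\pi'$ restricts to an isomorphism $Z(L_{k'})^{\circ}\xrightarrow{\ \sim\ }Z(G')^{\circ}$.

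\emph{The dichotomy.} As Definition~\ref{defn:almostLevi} concerns only $\ovl{k}$-points, $L$ is an almost Levi subgroup of $G$ if and only if $L_{k^{\sep}}$ is one of $G_{k^{\sep}}$; and since the minimal field of definition of $\mathscr{R}_u((G_{k^{\sep}})_{\ovl{k}})$ is $k'k^{\sep}$ and the formation of $\pi'$ is compatible with the base change to $k^{\sep}$, the statements are unchanged, so I assume $k=k^{\sep}$. Fix a pseudo-simple component $S$ of $G$; it is absolutely pseudo-simple and normal, and $\bar S:=\pi'((S)_{k'})$ is an almost-simple factor of $G'$. A short argument --- using the normality of $S$, the pairwise commutation of the pseudo-simple components of $G$, the injectivity of $\rho$ on $\ovl{k}$-points and the decomposition of $G(\ovl{k})$ --- identifies $L\cap S$ with the almost-simple factor $L_S$ of the reductive group $L$ that $\rho$ carries onto $\bar S$, so that $\rho$ restricts to an isogeny $L\cap S\to\bar S$ with unipotent infinitesimal kernel. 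If $S$ has reduced root system, then: $\rho$ identifies a maximal torus of $L\cap S$ with one of $\bar S$; the roots of $\bar S$ coincide with those of $S$, since the quotient map $\pi'$ (a quotient by a smooth connected unipotent group) rescales no roots; the roots of $L\cap S$ are a subset of those of $S$ of the same cardinality as those of $\bar S$, hence all three root systems coincide in the common character lattice; therefore the isogeny $L\cap S\to\bar S$ rescales no root and so is central. Being also unipotent, its kernel lies in the multiplicative-type group $Z((L\cap S)_{k'})$ and is therefore trivial, so $L\cap S$ is a Levi subgroup of $S$ and we are in case (i). If instead $S$ has non-reduced root system, then $\Char(k)=2$ and $S$ is of type $BC_n$ for some $n\ge1$; a group of type $BC_n$ admits no Levi subgroup (reductive groups have reduced root systems), so the isogeny $L\cap S\to\bar S$ has nontrivial unipotent infinitesimal kernel, and the classification of pseudo-simple $k$-groups with non-reduced root system in \cite{CGP} gives $\bar S\cong\mathrm{Sp}_{2n}$; the only reductive group admitting an isogeny onto $\mathrm{Sp}_{2n}$ with nontrivial unipotent kernel is $\mathrm{SO}_{2n+1}$, via the very special isogeny of kernel $(\alpha_2)^{2n}$, so $L\cap S\cong\mathrm{SO}_{2n+1}$ and $L_{\ovl{k}}\cap S_{\ovl{k}}\cap\ker\pi'_{\ovl{k}}\cong(\alpha_2)^{2n}$, which is case (ii). Running over all pseudo-simple components of $G_{k^{\sep}}$ yields the dichotomy. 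For the last assertion, if the root system of $G_{k^{\sep}}$ is reduced then every component is in case (i), so $\rho_{k^{\sep}}$ restricts to an isomorphism on each simple factor and to an isomorphism on maximal tori (its kernel being unipotent); by Lemma~\ref{homisiso}, $\rho_{k^{\sep}}$ is an isomorphism, i.e.\ $(\pi')_{\ovl{k}}$ maps $L_{\ovl{k}}$ isomorphically onto $G_{\ovl{k}}/\mathscr{R}_u(G_{\ovl{k}})$, so $L$ is a Levi subgroup of $G$.

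\emph{The main obstacle.} The first paragraph and the reduction to $k=k^{\sep}$ are routine; the identification $L\cap S=L_S$ needs some care but no new ideas; and the reduced case is handled cleanly by the root-data comparison, which in particular also rules out the special isogenies of type $F_4$ in characteristic $2$ and type $G_2$ in characteristic $3$. The genuine difficulty, and the step I expect to cost the most effort, is the non-reduced case: one must extract from the structure theory in \cite{CGP} that the reductive quotient of a $BC_n$-type pseudo-simple group is $\mathrm{Sp}_{2n}$ and that an almost Levi subgroup of it is necessarily $\mathrm{SO}_{2n+1}$ embedded via the special isogeny with kernel $(\alpha_2)^{2n}$.
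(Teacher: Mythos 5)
Your treatment of reductivity and the centre is essentially the paper's argument and is fine.  The genuine gap is in the dichotomy, and it is a logical error, not just a skipped detail.  You assert that ``a group of type $BC_n$ admits no Levi subgroup (reductive groups have reduced root systems)'' and use this to conclude that for a $BC_n$ component the isogeny $L\cap S\to\bar S$ has nontrivial kernel.  But that claim is false: every pseudo-split pseudo-reductive group has a Levi subgroup by \cite[3.4.6]{CGP}, and for a pseudo-simple $BC_n$ group a Levi subgroup is isomorphic to $\mathrm{Sp}_{2n}$.  The parenthetical justification conflates the root system of $S$ (which is $BC_n$) with the root system of a Levi subgroup of $S$ (which is the reduced system $C_n$ of the geometric reductive quotient).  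Consequently your proof would derive the false statement ``non-reduced root system of $S$ implies case (ii)'', and applied to an honest Levi subgroup $L=\mathrm{Sp}_{2n}$ of a pseudo-split $BC_n$ group it would wrongly conclude $L\cong\mathrm{SO}_{2n+1}$.  The dichotomy must therefore be organised around whether the isogeny $\upsilon\colon (S_0)_{k'}\to S'$ is an isomorphism, not around whether the root system of $S$ is reduced.

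The paper's route is the correct fix: if $\upsilon$ is not an isomorphism, it is a unipotent isogeny between split simple groups, and one invokes the classification of such isogenies (Prasad--Yu \cite[Lem.\ 2.2]{PY}, Vasiu \cite[Thm.\ 2.2]{Va}) to get $\Char(k)=2$, $S_0\cong\mathrm{SO}_{2n+1}$, $S'\cong\mathrm{Sp}_{2n}$, $\ker\upsilon\cong(\alpha_2)^{2n}$.  Only then does one deduce that $S$ has non-reduced root system, by observing that the product $U$ of the first Frobenius kernels of the short root groups of $S_0$ lies in $\ker i_S$ but is non-central, and appealing to \cite[6.2.15]{CP1}.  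Your argument runs this implication backwards, which is what produces the error.  Two smaller remarks: your ``short argument'' that $L\cap S$ equals the simple factor $L_S$ is doing more work than you allow for --- the paper needs a Lie-algebra computation to show $S_0=(L\cap S)^\circ$ and a separate infinitesimality argument to show $L\cap S$ is connected --- and it would be good to spell this out; and the root-datum argument in your reduced case, while plausible, is made unnecessary once you adopt the isogeny-classification approach, which treats the reduced and non-reduced possibilities uniformly.
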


\begin{proof} Without loss of generality we can assume that $k=k^{\sep}$, 
since being a Levi subgroup or an almost Levi subgroup is invariant under base change by $k^{\sep}/k$. 
Thus we may assume that $G$ is pseudo-split.
% A subgroup $H$ of $G$ is Levi (resp. almost Levi) iff $H_{k'}$ is Levi (resp. almost Levi) in $G_{k'}$ for all field extensions $k'/k$.

 We first show that $L$ is reductive. By definition of an almost Levi subgroup and since $G'$ is smooth, the restriction map $\smash{\pi'|_{L_{k'}}:L_{k'} \to G'}$ is bijective on $\smash{\ovl{k}}$-points and its kernel is infinitesimal and unipotent. In other words, $\pi'|_{L_{k'}}$ is a unipotent isogeny. Then $L$ is connected, since $\smash{\ker \pi'|_{L_{k'}}}$ is connected and as connectedness is preserved by group extensions. Since $\smash{\mathscr{R}_u(G_{\ovl{k}}) \cap L_{\ovl{k}}}$ has trivial $\smash{\ovl{k}}$-points and as $\smash{\pi'_{\ovl{k}}(\mathscr{R}_u(L_{\ovl{k}})) \subseteq \mathscr{R}_u(G'_{\ovl{k}})=1}$, it follows that $L$ is reductive.

 Next consider the central torus $Z(L_{k'})^{\circ}$ of $L_{k'}$. Since $\smash{\pi'|_{L_{k'}}}$ is an isogeny between reductive $k'$-groups, it restricts to an isogeny $Z(L_{k'})^{\circ} \to \pi'(Z(L_{k'})^{\circ})=Z(G')^{\circ}$. But $\ker\pi'$ is unipotent,
% and tori contain no non-trivial unipotent subgroups,
so $Z(L_{k'})^{\circ} \to Z(G')^{\circ}$ is an isomorphism.

 Let $S$ be a pseudo-simple component of $G$. Since $\smash{\pi'|_{L_{k'}}}$ is an isogeny between split reductive $k'$-groups, it induces a bijection between the respective sets of simple components of $L_{k'}$ and $G'$. % As in remark preceding \cite[Lemma 2.2]{PY}.
Consider the simple component $S':=\pi'(S_{k'})$ of $G'$, and let $S_0$ be the simple component of $L$ satisfying $\pi'((S_0)_{k'})=S'$. Certainly $S_0 \subseteq S$. Let us define $\smash{\upsilon:(S_0)_{k'} \to S'}$ to be the composition of the inclusion $(S_0)_{k'} \hookrightarrow S_{k'}$ with $\pi'|_{S_{k'}}$. If $\upsilon$ is an isomorphism then $S_0$ is a Levi subgroup of $S$.

 Now assume that $\upsilon$ is not an isomorphism; then it is a unipotent isogeny between split simple $k$-groups.
 Such isogenies were classified in \cite[Lem.\ 2.2]{PY} and \cite[Thm.\ 2.2]{Va}.
 These results tell us that $\Char(k)=2$ and there exists $n:=n(S) \geq 1$ such that $S_0 \cong \mathrm{SO}_{2n+1}$, $S' \cong \mathrm{Sp}_{2n}$ and $\ker \upsilon = (S_0)_{k'} \cap \ker \pi' \cong (\alpha_2)^{2n}$. Let $U$ be the (direct) product of the first Frobenius kernels of the short root subgroups of $S_0$. Then $\ker \upsilon =U_{k'}$ by \cite[Lem.\ 2.2]{BRSS}. Consider the map $i_S:S\to R_{k'/k}(S')$ defined in $(\ref{iGmap})$ (certainly $\smash{\mathscr{R}_u(S_{\ovl{k}})}$ descends to $k'$, even if it is not the minimal such field). Observe that $U \subseteq \ker i_S$, but $U$ is non-central in $S$, and hence by \cite[6.2.15]{CP1} the root system of $S$ is non-reduced. The only irreducible non-reduced root system of rank $n$ is $BC_n$.

It remains to show that $S_0=L \cap S$. Let $T$ be a maximal torus of $L$. Then $T \cap S_0$ is a maximal torus of $S_0$ by \cite[A.2.7]{CGP}. Observe that $L$ has maximal rank in $G$. Hence, by a similar argument, $T \cap S=T \cap S_0$ is a maximal torus of $S$. Since $L$ is reductive, we have $Z_{L \cap ST}(T)=ST \cap Z_L(T)=T$. Then $$\Lie(L \cap ST) = \Lie(T) \oplus \bigoplus\{\mathfrak{g}_{\alpha}\hspace{0.5mm}|\hspace{0.5mm}\alpha \in \Phi(L,T) \cap \Phi(ST,T)\} = \Lie(S_0T).$$ That is, the inclusion $S_0T \subseteq L \cap ST$ induces an equality on Lie algebras. But $S_0T$ is smooth and connected, and so $S_0T = (L \cap ST)^{\circ}$. Taking the derived subgroup gives us $S_0=(L \cap S)^{\circ}$. Since $L_{k'} \cap S_{k'} \cap \ker \pi'$ has trivial $\smash{\overline{k}}$-points, it is infinitesimal and in particular connected. But $L_{k'} \cap S_{k'}$ is an extension of the connected $k'$-group $S'$ by $L_{k'} \cap S_{k'} \cap \ker \pi'$, and so $L \cap S$ is also connected. This shows that the third property in (ii) holds.

It remains to prove the second assertion of the proposition. Suppose the root system of $G_{k^{\sep}}$ is reduced. By the (already proved) first assertion, the isogeny $\smash{\pi'_{\ovl{k}}|_{L_{\ovl{k}}}}:L_{\ovl{k}} \to \smash{G'_{\ovl{k}}}$ restricts to an isomorphism between each corresponding pair of simple components. By \cite[11.14]{Bo} and since $\ker \pi'$ is unipotent, $\pi'_{\ovl{k}}|_{L_{\ovl{k}}}$ sends any maximal torus of $L_{\ovl{k}}$ isomorphically onto a maximal torus of $\smash{G'_{\ovl{k}}}$. Hence $\pi'_{\ovl{k}}|_{L_{\ovl{k}}}$ is an isomorphism by Lemma \ref{homisiso}. That is, $L$ is a Levi subgroup of $G$.
\end{proof}

We can now prove Theorem \ref{thm:levitating}.

\begin{proof}[Proof of Theorem \ref{thm:levitating}] 
Recall that $G$ is a pseudo-reductive $k$-group, and $H$ is a smooth subgroup of $G$.

 We first prove (i). We assume that $k=k^{\sep}$. In particular, $G$ and $H$ are pseudo-split. 

 If $H$ contains an almost Levi subgroup $L$ of $G$ then the restriction of $\pi'(\ovl{k})$ to $L(\ovl{k})$ is an isomorphism, so indeed $\pi'(H_{k'})= G'$. For the converse, suppose that $H$ is a levitation of $G'$ in $G$. 
 We will show that $H$ is pseudo-reductive so that we can then apply \cite[3.4.6]{CGP}.

Let $U$ be a smooth connected unipotent normal subgroup of $H$. Then $\pi'(U_{k'})$ is trivial as $G'$ is reductive, so $U_{k'} \subseteq \ker\pi'=\mathscr{R}_u(G_{k'})$. Consider the subgroup of $G_{k^{\sep}}$ generated by all $G(k^{\sep})$-conjugates of $U_{k^{\sep}}$; it admits a $k$-descent which we call $N$. Clearly $N$ is smooth, connected and unipotent, and $N_{k'}$ is contained in $\mathscr{R}_u(G_{k'})$. Moreover, $N$ is normal in $G$ since $G(k^{\sep})$ is dense in $G$. Hence $N$ is contained in $\mathscr{R}_u(G)$, which is trivial. So indeed $H$ is pseudo-reductive.
 
Since $H$ is pseudo-split and pseudo-reductive, applying \cite[3.4.6]{CGP} tells us there exists a Levi subgroup $L$ of $H$. Consider the maximal reductive quotient map $\kappa':H_{\ovl{k}} \to H_{\overline{k}}/\mathscr{R}_u(H_{\overline{k}})$. Note that $\kappa'$ sends $L_{\ovl{k}}$ isomorphically onto $H_{\overline{k}}/\mathscr{R}_u(H_{\overline{k}})$. Since $H$ is a levitation of $G'$ in $G$, the restriction $\smash{\pi'_{\ovl{k}}|_{H_{\ovl{k}}}}:H_{\ovl{k}} \to \smash{G'_{\ovl{k}}}$ is a quotient map. Hence $\smash{\pi'_{\ovl{k}}|_{H_{\ovl{k}}}}$ factors through $\kappa'$, as $G'$ is reductive. It follows that $\pi'(\ovl{k})$ sends $L(\ovl{k})$ onto $G'(\ovl{k})$. Since $L$ is reductive, the normal subgroup $L(\ovl{k})\cap \mathscr{R}_u(G(\ovl{k}))$ of $L(\ovl{k})$ must be trivial by Lemma \ref{unipmap}. Hence $L$ is an almost Levi subgroup of $G$. This completes the proof of (i).

 We next prove (ii). Since smoothness is a geometric property, we can assume that $k=k^{\sep}$. Suppose the root system of $G$ is reduced. Assume that $H$ is a levitation of $G'$ in $G$. Then, by part (i) along with Proposition \ref{almostLevichar}, $H$ contains a Levi subgroup $L$ of $G$. The restriction $\pi'|_{L_{k'}}:L_{k'} \to G'$ is an isomorphism, and so it induces an isomorphism on Lie algebras. Since $H$ contains $L$, the restriction $\pi'|_{H_{k'}}:H_{k'} \to G'$ induces a surjection on Lie algebras. Hence $\pi'|_{H_{k'}}$ is smooth by \cite[1.63]{Mi}.
\end{proof}

 The following example shows that both parts of Theorem \ref{thm:levitating} can fail if we allow $G$ to be quasi-reductive rather than pseudo-reductive.

\begin{example}\label{quasiredfails} 
Let $k$ be an imperfect field of characteristic $2$, and let $N=\alpha_2 \times \alpha_2$. Choose any smooth connected wound unipotent $k$-group $U$ in which $N$ embeds as a central subgroup. Let $G$ be the central product $(\PGL_2 \times U)/N$, where $N$ embeds into $\PGL_2$ as the intersection of its Frobenius kernel with its root groups. The unipotent radical $\mathscr{R}_u(G)$ of $G$ is isomorphic to $U$ --- in particular, $G$ is quasi-reductive --- and $\mathscr{R}_u(G)$ is a $k$-descent of $\mathscr{R}_u(G_{\overline{k}})$. So our map $\pi'$ is simply the natural projection $G \to G/\mathscr{R}_u(G)=:G'$. Let $H$ be the canonical subgroup of $G$ that is isomorphic to $\PGL_2$, and let $H':=\pi'(H)$. Then $H' = G' \cong \SL_2$, yet $G$ does not admit a Levi subgroup. Moreover, the restriction of $\pi'$ to $H$ has non-smooth kernel.
\end{example}

\section{Levitating subgroups}
\label{sec:levitatingsection}

 Let $G$ be a pseudo-reductive $k$-group with minimal field of definition $k'/k$ for its geometric unipotent radical, and quotient map $\pi':G_{k'} \to G_{k'}/\mathscr{R}_u(G_{k'}):=G'$.
% ,  and recall the maps 
% $\pi':G_{k'} \to G_{k'}/\mathscr{R}_u(G_{k'})=:G'$ and $i_G:G \to R_{k'/k}(G')$ and $q_{G'}:R_{k'/k}(G')_{k'} \to G'$  from Section \ref{theadjunction}.
% Recall also Definition \ref{defn:levitation} for levitations in $G$ of subgroups of $G'$.
%
In this section we investigate the following general question: \emph{which subgroups of $G'$ levitate in $G$, and which smooth subgroups of $G'$ levitate smoothly in $G$?} 
Along the way we prove Theorem \ref{thm:levitating_crit}.
%\ref{conjlevitationsconverse} and \ref{thm:levitating_regsubgroups}, as well as Corollaries \ref{thm:levitating_critcor}, \ref{conjlevitationsconversecor} and \ref{thm:levitating_regsubgroupscor}.
Before proceeding, we give a brief overview of the results below to help with navigation.  Our basic idea is to consider three progressively more general cases: first, where $G = R_{k'/k}(G')$ is a Weil restriction; second, where $i_G(G)$ contains $\smash{\mathscr{D}(R_{k'/k}(G'))}$; finally, we consider the general case, for arbitrary pseudo-reductive $G$.

In the first case, if $H'$ is a smooth subgroup of $G'$ then Lemma \ref{adjunctionprop} tells us that $H'$ levitates in $G$, its largest levitation being $R_{k'/k}(H')$, which moreover is smooth. However Example \ref{ex:nonsmooth_nonlev} shows that a non-smooth subgroup $H'$ of $G'$ need not levitate in $G$.

In the second case,  it turns out that the condition that $i_G(G)$ contains $\smash{\mathscr{D}(R_{k'/k}(G'))}$ is equivalent to requiring that all tori of $G'$ levitate in $G$. 
If, in addition, the root system of $G_{k^{\sep}}$ is reduced, this condition is equivalent to requiring that all tori of $G'$ \emph{smoothly} levitate in $G$. 
This is a consequence of Theorem \ref{thm:levitating_crit} and its proof. A similar statement holds if we require that $H'$ is generated by tori; see Corollary \ref{thm:levitating_critcor}. However, unlike when $G=R_{k'/k}(G')$, a smooth subgroup $H'$ of $G'$ need not levitate in $G$; %, and even if it does, it need not smoothly levitate in $G$. 
refer to Example \ref{ex:smooth_nonlev}. % and \ref{ex:no_smooth_levitation}, respectively.

Finally consider the general case, for arbitrary pseudo-reductive $G$. If $H'$ is a smooth connected normal subgroup of $G'$ then it smoothly levitates in $G$ by \cite[3.1.6]{CGP}. However in the absence of normality a torus of $G'$ need not levitate in $G$, and even if it does, it need not smoothly levitate in $G$. Refer to Examples \ref{ex:non_levitating} and \ref{BC1torusnotlift}, respectively.

\subsection{Basic results and examples}
Our first basic result shows that in case a subgroup does levitate, there is a largest levitation.

\begin{prop}\label{largestlevitation} Let $H'$ be a subgroup of $G'$. If there exists at least one levitation (resp. smooth levitation) of $H'$ in $G$, then there exists a largest such levitation (resp. smooth levitation) of $H'$ in $G$: namely $\smash{i_G^{-1}(R_{k'/k}(H'))}$ (resp. $\smash{(i_G^{-1}(R_{k'/k}(H')))^{\Sm}}$).
\begin{proof} Assume that $H$ is a levitation of $H'$ in $G$. Combining Lemmas \ref{adjunctionprop} and \ref{adjunctionpropnew} tells us that $i_G(H)$ is contained in $R_{k'/k}(H')$, and that $q_{G'}(R_{k'/k}(H')_{k'}) = H'$. That is, $R_{k'/k}(H')$ is the largest subgroup $Z$ of $R_{k'/k}(G')$ that satisfies $q_{G'}(Z_{k'})=H'$. Since $\pi'=q_{G'}\circ (i_G)_{k'}$ by (\ref{pi'_eqn}), it follows that $i_G(H) \subseteq R_{k'/k}(H')$. 
So $H$ is contained in $\smash{i_G^{-1}(R_{k'/k}(H'))=:\widetilde{H}}$, which satisfies $\pi'(\smash{\widetilde{H}}_{k'})=H'$. 
In other words $\smash{\widetilde{H}}$ is the largest levitation of $H'$ in $G$. If $H$ is smooth (which implies that $H'$ is smooth), then $H$ is contained in $\widetilde{H}$, so $H\subseteq (\smash{\widetilde{H}})^{\Sm}$.  Hence $(\smash{\widetilde{H}})^{\Sm}$ is the largest smooth levitation of $H'$ in $G$.
\end{proof}
\end{prop}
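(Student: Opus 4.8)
The plan is to verify directly that $\widetilde{H} := i_G^{-1}(R_{k'/k}(H'))$ is the largest levitation of $H'$ in $G$ (this is a subgroup of $G$, being the preimage of a subgroup under the homomorphism $i_G$, and it depends only on $H'$), and then to deduce the smooth statement by intersecting with the largest smooth subgroup. The one identity I lean on throughout is $\pi' = q_{G'} \circ (i_G)_{k'}$ from $(\ref{pi'_eqn})$, which gives $\pi'(H_{k'}) = q_{G'}\bigl((i_G(H))_{k'}\bigr)$ for any subgroup $H$ of $G$.

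First I would show that every levitation lies inside $\widetilde{H}$. Suppose $H$ is a levitation, so $H' = \pi'(H_{k'}) = q_{G'}\bigl((i_G(H))_{k'}\bigr)$. Applying Lemma \ref{adjunctionpropnew} to the subgroup $i_G(H)$ of $R_{k'/k}(G')$ yields $i_G(H) \subseteq R_{k'/k}\bigl(q_{G'}((i_G(H))_{k'})\bigr) = R_{k'/k}(H')$, hence $H \subseteq i_G^{-1}(R_{k'/k}(H')) = \widetilde{H}$. Next I would check that $\widetilde{H}$ is itself a levitation. By construction $i_G(\widetilde{H}) \subseteq R_{k'/k}(H')$, so $\pi'(\widetilde{H}_{k'}) = q_{G'}\bigl((i_G(\widetilde{H}))_{k'}\bigr) \subseteq q_{G'}\bigl(R_{k'/k}(H')_{k'}\bigr) \subseteq H'$, the last inclusion being the canonical one from Lemma \ref{adjunctionprop}; and since $H \subseteq \widetilde{H}$ by the previous step, $H' = \pi'(H_{k'}) \subseteq \pi'(\widetilde{H}_{k'})$. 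Hence $\pi'(\widetilde{H}_{k'}) = H'$, and combined with the first step this exhibits $\widetilde{H}$ as the largest levitation of $H'$ in $G$.

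For the smooth case, suppose $H$ is a smooth levitation. Then $H \subseteq \widetilde{H}$ by the first step and $H$ is smooth, so $H \subseteq (\widetilde{H})^{\Sm}$, the largest smooth subgroup of $\widetilde{H}$; and from $H \subseteq (\widetilde{H})^{\Sm} \subseteq \widetilde{H}$ the same squeeze gives $H' \subseteq \pi'\bigl((\widetilde{H})^{\Sm}_{k'}\bigr) \subseteq \pi'(\widetilde{H}_{k'}) = H'$, so $(\widetilde{H})^{\Sm}$ is a smooth levitation of $H'$ containing every smooth levitation. The one genuinely delicate point is the middle step: Lemma \ref{adjunctionprop} supplies only an inclusion $q_{G'}(R_{k'/k}(H')_{k'}) \subseteq H'$ when $H'$ is not smooth, so one cannot conclude that $\widetilde{H}$ levitates from Lemma \ref{adjunctionprop} alone — one really needs the hypothesis that some levitation $H$ exists, as it is $H$ that forces the reverse inclusion $H' \subseteq \pi'(\widetilde{H}_{k'})$. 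Everything else is formal bookkeeping with the maps $i_G$, $q_{G'}$, $\pi'$ and with images and preimages of subgroup schemes.
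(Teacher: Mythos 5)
Your proof is correct and follows essentially the same route as the paper's: use Lemma \ref{adjunctionpropnew} to show $i_G(H)\subseteq R_{k'/k}(H')$ so every levitation sits inside $\widetilde{H}$, then squeeze between the canonical inclusion from Lemma \ref{adjunctionprop} and the inclusion coming from a given levitation $H$ to conclude that $\widetilde{H}$ (resp.\ $\widetilde{H}^{\Sm}$) is itself a (resp.\ smooth) levitation. Your explicit remark that the hypothesis of an existing levitation is what forces the reverse inclusion $H'\subseteq\pi'(\widetilde{H}_{k'})$ when $H'$ is not smooth is exactly the point the paper's slightly compressed wording relies on.
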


An obvious example of a smooth subgroup of $G'$ that smoothly levitates in $G$ is the trivial subgroup; its largest levitation in $G$ is $\ker i_G$, and its largest smooth levitation is \'etale.

Now consider the (abstract group) homomorphism given by composing the canonical inclusion $G(k)\hookrightarrow G(k')$ with $\pi'(k'):G(k') \to G'(k')$; we abuse notation and call this map 
\begin{equation}\label{abusemap}\pi'(k):G(k) \to G'(k').\end{equation} % Suppose $k=k^{\sep}$. Note that $\pi'$ is smooth and surjective, so $\pi'(k')$ is surjective, but $\pi'(k)$ is not necessarily surjective. Given a point $g' \in G'(k')$, we call $g \in G(k)$ a \emph{levitation} of $g'$ in $G$ if it is a preimage of $g'$ under $\pi'(k)$, and we say that $g'$ \emph{levitates} to $g \in G(k)$. [N.B. if $g'\in G'(k')$ then $g'$ corresponds to a point $g\in R_{k'/k}(G')$, but $g$ need not pull back to a point in $G$ if $i_G$ is not surjective.]
% Of course, given a subgroup $H$ of $G$ and say $H_1:=gHg^{-1}$, then $\pi'((H_1)_{k'})= g'\pi'(H_{k'})(g')^{-1}$.
We give a basic criterion for subgroups of $G'$ to levitate in $G$.

\begin{prop}\label{conjlevitations} Let $H'$ be a subgroup of $G'$ that levitates (resp. smoothly levitates) in $G$. Let $H'_1$ be another subgroup of $G'$. If there exists $g'$ in the image of $\pi'(k)$ such that $H'_1=g'H'(g')^{-1}$ then $H'_1$ levitates (resp. smoothly levitates) in $G$. 
\begin{proof} Let $g \in G(k)$ such that $\pi'(k)(g)=:g'$ satisfies $H'_1=g'H'(g')^{-1}$. Let $H$ be a levitation of $H'$ in $G$. Then $$\pi'((gHg^{-1})_{k'})=g'H'(g')^{-1}=H'_1,$$ so $gHg^{-1}$ is a levitation of $H'_1$ in $G$. If $H$ is smooth then of course so is $gHg^{-1}$.
\end{proof}
\end{prop}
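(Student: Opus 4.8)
The plan is to transport a given levitation of $H'$ directly by conjugation in $G(k)$. Concretely, suppose $H$ is a levitation of $H'$ in $G$, so $\pi'(H_{k'}) = H'$, and pick $g \in G(k)$ with $\pi'(k)(g) = g'$, where $\pi'(k)$ denotes the composite map of~(\ref{abusemap}). Conjugation by $g$ is an automorphism of $G$ defined over $k$, so $gHg^{-1}$ is again a subgroup of $G$ (and a smooth one if $H$ is smooth, since conjugation is an isomorphism of $k$-schemes). It remains only to compute the image of $(gHg^{-1})_{k'}$ under $\pi'$.

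The key step is functoriality of the whole picture under conjugation. Base-changing to $k'$, we have $(gHg^{-1})_{k'} = g_{k'} H_{k'} g_{k'}^{-1}$, where we identify $g \in G(k)$ with its image in $G(k')$. Since $\pi'\colon G_{k'} \to G'$ is a homomorphism of $k'$-groups, it intertwines conjugation: $\pi'(g_{k'} H_{k'} g_{k'}^{-1}) = \pi'(g_{k'})\,\pi'(H_{k'})\,\pi'(g_{k'})^{-1}$. Now $\pi'(g_{k'})$ is precisely $g' \in G'(k')$ by the definition of $\pi'(k)$ as the composite $G(k) \hookrightarrow G(k') \xrightarrow{\pi'(k')} G'(k')$, and $\pi'(H_{k'}) = H'$ by hypothesis. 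Hence $\pi'((gHg^{-1})_{k'}) = g' H' (g')^{-1} = H'_1$, so $gHg^{-1}$ is a levitation of $H'_1$ in $G$, which is smooth whenever $H$ is.

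There is essentially no obstacle here: the statement is a formal consequence of the fact that $\pi'$ is a group homomorphism over $k'$ and that $g$ is a $k$-point (so that conjugation by $g$ is defined over $k$ and compatible with base change to $k'$). The only point requiring a word of care is the identification $\pi'(g_{k'}) = g'$, which is just unwinding the abuse of notation in~(\ref{abusemap}); and the observation that $\mathcal{S}$-images commute with conjugation by elements of the image of $\pi'(k)$, which is immediate once everything is base-changed to $k'$. I would present this as a short direct argument rather than invoking any of the earlier lemmas on Weil restriction, since no adjunction machinery is needed.
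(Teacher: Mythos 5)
Your proof is correct and takes essentially the same route as the paper's: conjugate a given levitation $H$ by a $k$-point $g$ mapping to $g'$ and use that $\pi'$ is a $k'$-homomorphism to conclude $\pi'((gHg^{-1})_{k'}) = g'H'(g')^{-1} = H'_1$. You simply spell out the intermediate verifications in slightly more detail than the paper does.
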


It is remarkable that a partial converse to Proposition \ref{conjlevitations} holds, when $H'$ is a maximal rank smooth subgroup of $G'$. 
We prove this in Theorem \ref{conjlevitationsconverse}, and apply it to maximal tori in Corollary \ref{conjlevitationsconversecor}. In Theorem \ref{thm:levitating_regsubgroups} we find a sufficient condition for a smooth subgroup $H'$ of $G'$ to smoothly levitate in $G$, namely, there exists a maximal torus $T'$ of $G'$ which normalises $H'$ and smoothly levitates in $G$. If in addition the root system of $G_{k^{\sep}}$ is reduced we can relax this condition slightly, requiring only that $T'$ levitates in $G$ rather than smoothly levitates. We prove this in Corollary \ref{thm:levitating_regsubgroupscor}.

We continue with examples which exhibit various possible behaviours.

\begin{rem}[\textbf{$k$-structures on $G'$}]\label{simplerem} Suppose $k=k^{\sep}$. Fix a maximal torus $T$ of $G$. Choose a Levi subgroup $L$ of $G$ containing $T$; this exists by \cite[3.4.6]{CGP} as $G$ is pseudo-split. 
Then $G'$ inherits a canonical $k$-structure from $L$, via base changing by $k'/k$ and applying the isomorphism $\pi'|_{L_{k'}}:L_{k'} \to G'$. 
Denote $T':=\pi'(T_{k'})$. Observe that $T'$ is a $k$-defined maximal torus of $G'$ with respect to this $k$-structure. 
A $k$-structure on a split reductive $k'$-group is completely determined by a choice of maximal torus and a pinning (see for instance \cite[A.4.13]{CGP}). % Also see \cite[23.43]{Mi}. A maximal torus $T$ is not enough, as you can conjugate the $k$-structure by some element of $T(k)$, which centralises $T$ but rescales the root groups. Consequently, the set of $k$-structures on a split reductive $k'$-group $G'$ are all $G(k')$-conjugate, as this is true of maximal tori and of pinnings by \cite[23.39]{Mi}.
So, up to a choice of simple roots and a scaling of the root groups, this $k$-structure on $G'$ does not depend on the choice of $L$ for fixed $T$.

By \cite[23.39]{Mi}, any two pinnings of the split reductive pair $(G',T')$ are conjugate by some element of $N_{G'}(T')(k')$. Now let $T_1$ be another maximal torus of $G$. Since $k=k^{\sep}$, there exists $g\in G(k)$ such that $gTg^{-1}= T_1$. Denote $g':=\pi'(k)(g) \in G'(k')$. Hence any $k$-structure on $G'$ induced by $T_1$ is conjugate by $g'$ to one that is induced by $T$. However, if $\pi'(k)$ does not give rise to a surjection onto $G'(k')/T'(k')$ then not every $k$-structure on $G'$ is induced by a choice of maximal torus of $G$. So there is a set of privileged $k$-structures on $G'$, each of which arises from a choice of maximal torus of $G$.
\end{rem}

In
%part (a) of
the following (very general) example we present a family of subgroups $H'$ of $G'$ that levitate in $G$; moreover, they smoothly levitate if $H'$ is smooth.
%In part (b) we give a family of subgroups of $G'$ that do not levitate in $G$.

\begin{example}\label{simpleexam} Suppose $k=k^{\sep}$. Let $H'$ be a subgroup of $G'$.  Fix a maximal torus $T$ of $G$. Assume $H'$ is $k$-defined with respect to the $k$-structure on $G'$ induced by $T$ (as in Remark \ref{simplerem}). Choose any Levi subgroup $L$ of $G$ containing $T$. Then, under the identification $G'=L_{k'}$, there exists an algebraic $k$-group $J$ and an inclusion $J \hookrightarrow L$ whose base change by $k'/k$ is precisely $H' \hookrightarrow G'$. Since the composition $$G'=L_{k'} \hookrightarrow G_{k'} \xrightarrow{\pi'} G'$$ is the identity, $J$ is indeed a levitation of $H'$ in $G$. If $H'$ is smooth then $J$ is also smooth, as smoothness is invariant under base change. 

% (b). Now assume $H'$ is not $k$-defined with respect to \textbf{any} $k$-structure on $G'$ induced by a maximal torus of $G$ % I.e., there is no ``privileged" $k$-structure on $G'$ for which $H'$ is $k$-defined.
%(for instance we could take $G$ to have a unique Levi subgroup, and $H'$ to be non-$k$-defined with respect to the induced $k$-structure, as in Example \ref{ex:non_levitating}). Then $H'$ does not levitate in $G$, as any levitation of $H'$ would endow it with such a $k$-structure.
\end{example}

 Not all subgroups $H'$ of $G'$ that levitate in $G$ can be constructed as in Example \ref{simpleexam}, even if $G$ is pseudo-split. We illustrate this as follows. % If $H'$ is a levitating subgroup of $G'$, it need not be the case that $G$ contains a Levi subgroup $L$ of $G$ such that $\pi'((H \cap L)_{k'})=H'$.

\begin{example}\label{nondescend} Let $k$ be a local function field of characteristic $2$. Let $k'$ be a degree 2 extension of $k$, and let $q':V' \to k'$ be an anisotropic non-degenerate quadratic form over $k'$ in 3 variables. It is shown in \cite[Ex.\ 7.2.2]{CGP} that $H':=\mathrm{SO}(q')$ does not descend to $k$. Consider the natural inclusion of $H'$ in $\mathrm{SL}(V')=:G'$. 
Denote $\smash{H:=R_{k'/k}(H')}$ and $\smash{G:=R_{k'/k}(G')}$. 
Since $H'$ is smooth, Lemma \ref{adjunctionprop} tells us that $H$ is a levitation of $H'$ in $G$. Let $L$ be a Levi subgroup of $G$ (which exists as $G$ is pseudo-split). 
If we had $\pi'((H \cap L)_{k'})=H'$ then $H \cap L$ would be a Levi subgroup of $H$, which violates \cite[7.2.1]{CGP}.
\end{example}

\subsection{Levitating tori}\label{levitatingtorisubsection}  

% Recall that $G$ is a pseudo-reductive $k$-group. As usual we have associated maps $\pi':G_{k'} \to G_{k'}/\mathscr{R}_u(G_{k'})=:G'$, $i_G:G \to R_{k'/k}(G')$ and $q_{G'}:R_{k'/k}(G')_{k'} \to G'$.

% In this subsection we are interested in levitations of subtori of $G'$ in $G$, and whether or not they are smooth.
% Under the assumption that the root system of $G_{k^{\sep}}$ is reduced, in Theorem \ref{thm:levitating_crit} we characterise the conditions under which all tori of $G'$ levitate in $G$. This turns out to be equivalent to all tori of $G'$ smoothly levitating.

 Let $T'$ be a torus of $G'$. It is natural to ask whether $T'$ levitates to a torus of $G$.  Obstructions to this come in two flavours. First, $T'$ need not levitate at all in $G$. Consider the following example.

\begin{example}\label{ex:non_levitating} Let $G_1$ be a reductive $k$-group which is not a torus. Let $k'/k$ be a non-trivial purely inseparable finite field extension. Set $G_1':=(G_1)_{k'}$ and $G:=G_1\times R_{k'/k}(\Gm)$. Note that $\ker i_G$ is trivial. The minimal field of definition for $R_u(G_{\ovl{k}})$ is $k'/k$, the maximal reductive $k'$-quotient of $G$ is $G_1'\times \Gm=:G'$, and $L:= G_1\times \Gm$ is the unique Levi subgroup of $G$.  As explained in Remark~\ref{simplerem}, $G'$ inherits a $k$-structure from $L$.
%; it is clear that the induced $k$-structure on $G_1'$ coincides with the $k$-structure inherited from $G_1$.
Choose a maximal torus $T'$ of $G'$ which is not $k$-defined.  Suppose $T'$ levitates.  Then $T'$ levitates to a torus $T$ of $G$ by Lemma~\ref{thm:levitating_tori} below, since the root system of $G$ is reduced.  But $T$ is contained in $L$, so $T'$ must be $k$-defined by Example~\ref{simpleexam}, a contradiction.
\end{example}

 Even if $G$ is absolutely pseudo-simple, it need not be the case that $T'$ levitates in $G$. It turns out that one can find such a non-levitating torus if and only if $G$ is non-standard; this will be proved in Theorem \ref{thm:levitating_crit}.

 The second obstruction is as follows: even if $T'$ levitates in $G$, it need not smoothly levitate in $G$. We demonstrate this in Example \ref{BC1torusnotlift} (via Lemma \ref{nonliftingtorus}), for $G$ pseudo-simple of type $BC_1$. 
 In fact, this obstruction can only occur if the root system of $G_{k^{\sep}}$ is not reduced: if the root system is reduced then we prove in Lemma \ref{thm:levitating_tori} that any torus of $G'$ that levitates in $G$ must smoothly levitate in $G$. 

\begin{lem}\label{nonliftingtorus} 
Assume that $k=k^{\sep}$. Let $T$ be a maximal torus of $G$. Suppose there exists some root $\alpha \in \Phi(G,T)$ such that the restriction $U_{\alpha} \to i_G(U_{\alpha})$ is not surjective on $k$-points. Then there exists a maximal torus (resp., Borel subgroup) of $G'$ that does not levitate to any maximal torus (resp., minimal pseudo-parabolic subgroup) of $G$. % [MF23].
\end{lem}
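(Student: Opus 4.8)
The plan is to exploit the hypothesis that $U_\alpha \to i_G(U_\alpha)$ fails to be surjective on $k$-points by producing a maximal torus $T'$ of $G'$ whose levitation would force exactly such surjectivity, giving a contradiction. First I would set up the ambient picture: since $k = k^{\sep}$, the group $G$ is pseudo-split with maximal torus $T$, and $G' = G_{k'}/\mathscr{R}_u(G_{k'})$ is split reductive over $k'$ with maximal torus $T' := \pi'(T_{k'})$ and root groups $U'_\alpha := \pi'((U_\alpha)_{k'})$ for $\alpha$ ranging over the reduced roots. I will use the standard fact (from \cite[3.4.6]{CGP} and the description of $\pi'$) that $\pi'$ restricts to an isomorphism on a Levi subgroup $L \supseteq T$, so each $U'_\alpha$ is an honest root group of $G'$ and $T'$ is an honest maximal torus; in particular $i_G$ restricted to $T$ and to $L$ is injective, and the failure of surjectivity on $k$-points of $U_\alpha \to i_G(U_\alpha)$ is a genuine obstruction located ``above'' the root group.

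Next I would choose the candidate torus $T'$ to be a conjugate $n' T' (n')^{-1}$ by a suitable element $n'$ of $G'(k')$ that mixes the $\alpha$-root direction with the torus — concretely, take $n'$ to be a unipotent element of $U'_\alpha(k')$ (or of $U'_{-\alpha}(k')$) that is \emph{not} in the image of the map $\pi'(k)\colon G(k) \to G'(k')$ precisely because of the non-surjectivity hypothesis; equivalently one can describe $T'_1$ as the maximal torus containing the regular semisimple element $n' t (n')^{-1}$ for generic $t \in T'(k')$. The key computation is then: if $T'_1$ were to levitate to a maximal torus $T_1$ of $G$, then by Proposition~\ref{conjlevitations}-type reasoning (or directly, by conjugacy of maximal tori of $G$ over $k = k^{\sep}$ and functoriality of $\pi'$) there would exist $g \in G(k)$ with $\pi'(k)(g)$ conjugating $T'$ to $T'_1$, forcing $n'$ to lie — up to $N_{G'}(T')(k')$ — in the image of $\pi'(k)$; tracking this through the root group decomposition $G'(k') = U'_{-\alpha}(k') \cdot N_{G'}(T')(k') \cdot U'_\alpha(k')$ (Bruhat-type factorization along the $SL_2$ or rank-one subgroup generated by $U'_{\pm\alpha}$) pins down that the $U'_\alpha(k')$-component of $n'$ must be hit by $U_\alpha(k)$, contradicting the hypothesis. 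The parenthetical Borel/minimal-pseudo-parabolic statement follows the same way: a minimal pseudo-parabolic $P$ of $G$ satisfies $\pi'(P_{k'}) = $ a Borel of $G'$, and a Borel $B'$ levitates to a minimal pseudo-parabolic only if a maximal torus of $B'$ does, so the torus case implies the Borel case.

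The main obstacle I anticipate is the bookkeeping in the rank-one reduction: one must be careful that when the root system of $G_{k^{\sep}}$ is non-reduced (the case of genuine interest, e.g.\ $BC_1$, since for reduced root systems Lemma~\ref{thm:levitating_tori} would handle matters differently), the root group $U_\alpha$ is not just a copy of $\Ga$ and $i_G(U_\alpha)$ may be strictly smaller than $U'_\alpha = \mathrm{R}_{k'/k}(\Ga)$-type object, so ``not surjective on $k$-points'' needs to be translated carefully into the statement that a specific $k'$-point of $U'_\alpha$ is not a $k$-point-image. I would handle this by working with the explicit parametrization of $U_\alpha$ and $U'_\alpha$ as in \cite[Lem.\ 2.2]{BRSS} and the structure of pseudo-simple groups of type $BC_n$, reducing to the assertion that the relevant coordinate lies in $k' \setminus (\text{image of the additive map induced by } i_G|_{U_\alpha})$. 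Once that translation is in place, the conjugation-and-Bruhat argument is routine, and picking $n'$ explicitly from that coset completes the proof.
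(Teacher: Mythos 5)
Your basic idea matches the paper's: conjugate the image of $T$ by a unipotent element supplied by the non-surjectivity hypothesis, and show the resulting torus cannot levitate. However, the execution has genuine gaps.

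The main issue is that you work at the level of $G'(k')$ and claim $n'\in U'_\alpha(k')$ can be chosen outside the image of $\pi'(k)\colon G(k)\to G'(k')$. The hypothesis is weaker: it only says that some $k$-point of $i_G(U_\alpha)$ is not in $i_G(U_\alpha(k))$; it does not say that point fails to be in the image of $\pi'(k)$ restricted to all of $G(k)$, and that stronger statement is what your Bruhat argument would need. The paper instead fixes $g\in (i_G(U_\alpha))(k)\setminus i_G(U_\alpha(k))$, sets $S_0:=gT_0g^{-1}$ with $T_0:=i_G(T)$, and then, \emph{assuming} a maximal torus $S$ of $G$ levitates $S'$, deduces first that $i_G(S)=S_0$ (using the unique maximal torus of the commutative group $R_{k'/k}(S')$) and then that $S\subseteq TU_\alpha$ (using finiteness of $(\ker i_G)(k)$). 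Only then does conjugacy inside the small group $TU_\alpha$, together with the $N/Z$ isomorphism from \cite[3.2.1]{CGP}, pin down that $S=xTx^{-1}$ for some $x\in U_\alpha(k)$ with $g^{-1}i_G(x)\in Z_{i_G(G)}(T_0)$; since $i_G(U_\alpha)\cap Z_{i_G(G)}(T_0)=1$, this forces $g=i_G(x)$, a contradiction. Your proposed factorization $G'(k')=U'_{-\alpha}(k')\cdot N_{G'}(T')(k')\cdot U'_\alpha(k')$ is not correct for higher rank and, even in rank one, does not by itself isolate the $U'_\alpha$-component as coming from $U_\alpha(k)$; the paper's route through $i_G$ and $TU_\alpha$ is what actually achieves that isolation.

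Your treatment of the Borel/minimal-pseudo-parabolic case is also incomplete. You assert that a Borel levitates only if a maximal torus of it does, but if $Q'$ levitated to a minimal pseudo-parabolic $Q$ you would only get that \emph{some} maximal torus of $Q'$ levitates, not the specific $S'$; to conclude you would need a conjugacy statement by an element in the image of $\pi'(k)$, which is exactly what is not available. The paper avoids this by constructing a minimal pseudo-parabolic $P\supseteq T,U_\alpha$ of $G$, taking $\lambda$ with $P_0=P_{i_G(G)}(\lambda)$ and $Z_{i_G(G)}(S_0)=Z_{i_G(G)}(\lambda)$, and setting $Q':=P_{G'}(-\lambda')$ so that $P'\cap Q'=S'$; then, if $Q'$ levitated to $Q$, the maximal-rank group $P\cap Q$ would satisfy $\pi'((P\cap Q)_{k'})=S'$ and any of its maximal tori would levitate $S'$, giving the contradiction directly from the torus case. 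You would need to add an argument of this kind to close the gap.
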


\begin{proof} 
Choose $g \in (i_G(U_{\alpha}))(k)$ such that $g \notin i_G(U_{\alpha}(k))$. Set $T_0:=i_G(T)$, $S_0:=gT_0g^{-1}$ and $S':=q_{G'}((S_0)_{k'})$. Observe that $S_0$ is a smooth levitation of $S'$ in $i_G(G)$. % As $S_0 \subset i_G(G)$.
Recall that $\pi'=q_{G'}\circ (i_G)_{k'}$ by (\ref{pi'_eqn}), so $i_G^{-1}(S_0)$ is a levitation of $S'$ in $G$. 

 Assume (for a contradiction) that $S'$ levitates to some maximal torus $S$ of $G$. Since $R_{k'/k}(S')$ is commutative it contains a unique maximal torus, which must equal $S_0$. Again using the universal property of $q_{G'}$, we deduce that $i_G(S)=S_0$.

 Since $i_G(S)=S_0=gT_0g^{-1} \subseteq T_0i_G(U_{\alpha})$ we have that $S \subseteq TU_{\alpha}\ker i_G$, and hence $S \subseteq TU_{\alpha}$ since $(\ker i_G)(k)$ is finite. % Since Hom functors are continuous, the functor of taking $k^{\sep}$-points indeed commutes with taking products.
Applying \cite[3.2.1]{CGP} to the map $TU_{\alpha} \to T_0i_G(U_{\alpha})$ got by restricting $i_G$ tells us that the induced map $$N_{TU_{\alpha}}(T)/Z_{TU_{\alpha}}(T) \to N_{T_0i_G(U_{\alpha})}(T_0)/Z_{T_0i_G(U_{\alpha})}(T_0)$$ is an isomorphism. Consequently there exists $x \in U_{\alpha}(k)$ such that $S=xTx^{-1}$ and such that $g^{-1}i_G(x)$ centralises $T_0$. But $i_G(U_{\alpha})\cap Z_{i_G(G)}(T_0)$ is trivial, and so $g=i_G(x)$. This contradicts our assumption that $g \notin i_G(U_{\alpha}(k))$. That is, $S'$ does not levitate to any maximal torus of $G$.

 Now let $P$ be a minimal pseudo-parabolic subgroup of $G$ that contains both $T$ and $U_{\alpha}$. Then $P_0:=i_G(P)$ is a minimal pseudo-parabolic subgroup of $i_G(G)$ that contains the maximal torus $S_0$. Since $i_G(G)$ is pseudo-reductive, we can choose a cocharacter $\lambda:\mathbb{G}_m \to i_G(G)$ such that $P_0=P_{i_G(G)}(\lambda)$ and $Z_{i_G(G)}(S_0)=Z_{i_G(G)}(\lambda)$. Consider the minimal pseudo-parabolic subgroup $Q_0:=P_{i_G(G)}(-\lambda)$ of $i_G(G)$; by construction $P_0 \cap Q_0 = Z_{i_G(G)}(S_0)$. % There exists a unique opposite pseudo-parabolic that shares a maximal torus.
Define another cocharacter $\lambda':=q_{G'} \circ \lambda_{k'}:\mathbb{G}_m \to G'$, and consider the associated Borel subgroups $P':=P_{G'}(\lambda')$ and $Q':=P_{G'}(-\lambda')$ of $G'$. % for verification see \cite[3.5.4]{CGP}
By construction $P' \cap Q' = S'$. Observe that $Q'=q_{G'}((Q_0)_{k'})$ by \cite[2.1.4, 2.1.9]{CGP}. In other words $Q'$ smoothly levitates to $Q_0$ in $i_G(G)$. So $Q'$ levitates to $i_G^{-1}(Q_0)$ in $G$.

 Assume (for a contradiction) that $Q'$ levitates to some minimal pseudo-parabolic subgroup $Q$ of $G$. Observe that $P\cap Q$ has maximal rank in $G$ by \cite[3.5.12(1)]{CGP}, hence $\pi'((P\cap Q)_{k'})=S'$. This implies that any maximal torus of $P \cap Q$ is a levitation of $S'$, which is a contradiction.
\end{proof}

 It follows that neither the maximal torus $S'$ nor the Borel subgroup $Q'$ of $G'$ constructed in the proof of Lemma \ref{nonliftingtorus} levitates to any smooth subgroup of $G$. One may either show this directly, or appeal to Theorem \ref{thm:levitating_regsubgroups}(ii).% (which we have not yet established).

 We now use Lemma \ref{nonliftingtorus} to give a concrete example of a type $BC_1$ $k$-group $G$ and a torus and Borel subgroup of $G'$ both of which levitate, but do not smoothly levitate, in $G$.

\begin{example}\label{BC1torusnotlift} Let $k$ be a separably closed imperfect field of characteristic $2$. Let $K= k(a^{1/2})$, where $a\in k$ but $a^{1/2}\not\in k$. Then $K= \{a^{1/2}x+ y\,|\,x,y\in k\}$.  Let $V'$ be the $k$-subspace $ka^{1/2}$ of $K$. Now take $V= K$ and define $q\colon V\to K$ by $q(z)= z^2$.  Set $\smash{V^{(2)}= q(V)= K^2\subseteq k}$. Then $V'\cap V^{(2)}= 0$.  Note also that $V^{(2)}= K^2$ has dimension $1$ as a vector space over $K^2$, and the subfield of $K$ generated over $k$ by $V'\oplus V^{(2)}$ is $K$ itself.
 
 Using this data, we can form a pseudo-simple $BC_1$ group $G$ of minimal type as in \cite[\S 9.8]{CGP} or \cite[Def.\ 3.3]{BRSS}. % Then $i_G(G)$ is of type ${\rm Sp}_2$.
Let $\alpha$ be a very short root (multipliable) for $G$ with respect to a maximal torus $T$ of $G$. We may identify $U_\alpha(G)(k)$ with $V\times V'$ (regarded as a $k$-vector space) and $U_{2\alpha}(i_G(G))(k)$ with $K$, and the map $f$ from $U_\alpha(G)$ to $U_\alpha(i_G(G))$ induced by $i_G$ is given on $k$-points by $(v,v')\mapsto q(v)+v'$ \cite[9.6.8, 9.6.9]{CGP}. 
 
 Now assume $[k:k^2]> 2$. We claim that $f$ is not surjective on $k$-points,
which follows as long as we can show that $V^{(2)}\oplus V'\neq K$;
in fact, we show that $k$ is not contained in the image of $f$.
Given $c\in k$, if we want $c = f(v,v') = q(v)+v'$, then we must have $v'=0$. 
Since $[k:k^2]>2$, there exist $c\in k$ such that $c\neq ax^2+y^2$ for any $x,y\in k$, and hence $c$ does not lie in the image of $f$. This proves the claim.

 Given the claim, by Lemma \ref{nonliftingtorus} and the subsequent remark, there exists a maximal torus and a Borel subgroup of $G' \cong \SL_2$ both of which levitate, but do not smoothly levitate, in $G$. % Also the maximal smooth subgroup $N_{G'}(T')\cong \mathbb{G}_m \cdot \Z_2$ of $G' $ does not smoothly levitate in $G$.
\end{example}

 We now move on to the proof of Theorem \ref{thm:levitating_crit}. We will need the following lemmas.

\begin{lem}\label{lem:torus_surj} 
Let $Z$ be a commutative affine algebraic $k$-group, and let $Z_t$ denote its unique maximal torus.
If $f$ is a surjective homomorphism from $Z$ to a $k$-torus $T$ then $f(Z_t)= T$.
\end{lem}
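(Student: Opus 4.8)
The plan is to reduce to the case where $Z$ is connected and smooth, and then exploit the structure of commutative affine algebraic groups. First I would observe that $f$ factors through $Z_{\red}$ (since $T$ is smooth, hence reduced), and further through $(Z_{\red})^{\circ}$ up to a finite obstruction; more cleanly, since $T$ is connected, $f$ restricts to a surjection $Z^{\circ} \to T$ (the image $f(Z^{\circ})$ is a connected subgroup of finite index in $T = f(Z)$, hence equals $T$). Replacing $Z$ by $Z^{\circ}$ and then by $(Z^{\circ})_{\red}$ (which is a subgroup since $Z$ is commutative, and carries the same maximal torus $Z_t$), I may assume $Z$ is smooth and connected, hence $Z$ is an extension $1 \to U \to Z \to Z_t \to 1$ where $U = \mathscr{R}_u(Z)$ is the unipotent part and $Z_t$ is the maximal torus, with $Z = U \times Z_t$ since $Z$ is commutative and these have coprime ``types'' (this is the standard structure theorem, e.g.\ \cite[Thm.\ 3.4]{Bo} or its analogue; over a non-perfect field one should instead argue with the maximal torus directly rather than assuming a splitting, but a splitting up to isogeny suffices).

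The key step is then: $f$ kills $U$, so $f$ factors through $Z/U \cong Z_t$ (or an isogenous torus), and the induced map $Z_t \to T$ is surjective. To see $f(U) = 1$: $U$ is unipotent, $f(U)$ is a unipotent subgroup of the torus $T$, and a torus has no nontrivial unipotent subgroups (a unipotent subgroup of a torus is trivial, since over $\overline{k}$ a torus is diagonalisable and contains no copy of $\Ga$, $\alpha_p$, etc.; more precisely any homomorphism from a unipotent group to a group of multiplicative type is trivial). Hence $f$ factors as $Z \to Z/U \to T$, and since $f$ is surjective the second map is a surjection of tori; but $Z_t$ surjects onto $Z/U$ (indeed maps isomorphically if we took the genuine quotient), so $f(Z_t) = T$.

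Concretely, the cleanest route avoiding structure-theorem subtleties over imperfect fields: let $U = \mathscr{R}_{u}(Z^{\circ})$... actually simpler still, note $f(Z_t)$ is a subtorus of $T$; I claim $Z = Z_t \cdot \ker f$. Since $f(U_{\mathrm{uni}}) = 1$ where I take $U_{\mathrm{uni}}$ to be the unipotent radical after base change, one gets $Z_{\overline{k}}/(Z_t)_{\overline{k}}$ is unipotent, so $f((Z_t)_{\overline k}) = f(Z_{\overline k}) = T_{\overline k}$, and since $f(Z_t)$ is a closed subgroup defined over $k$ with $f(Z_t)(\overline k) = T(\overline k)$ we conclude $f(Z_t) = T$. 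The main obstacle I anticipate is the non-perfect field issue: the decomposition $Z = U \times Z_t$ need not hold, and $Z_{\mathrm{red}}$ need not be smooth; the safe workaround is to pass to $\overline k$ where $Z_{\overline k}$ has a well-behaved maximal torus and unipotent part, carry out the argument there, and then descend the (already $k$-defined) conclusion $f(Z_t) = T$ via equality on $\overline k$-points. I would present it that way to keep the proof short and watertight.
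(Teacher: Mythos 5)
Your argument is correct and reaches the desired conclusion, but by a genuinely different route from the paper. The paper works entirely over $k$ and never reduces to the smooth or connected case: by \cite[Thm.\ 16.13(a)]{Mi} there is a multiplicative-type subgroup $Z_s\subseteq Z$ with $Z/Z_s$ unipotent, and the proof is two quick applications of ``a surjection onto a torus from a unipotent (resp.\ finite) group is trivial'' --- first to replace $Z$ by $Z_s$, then, using the exact sequence $1\to Z_t\to Z_s\to F\to 1$ with $F$ finite from \cite[Cor.\ 12.24]{Mi}, to conclude $f(Z_t)=T$. Your route instead first reduces to $Z$ connected, then passes to $\overline{k}$, decomposes $Z_{\overline{k}}$ as a torus times a unipotent group, kills the unipotent part, and descends via equality of $\overline{k}$-points on the smooth closed subgroups $f(Z_t)$ and $T$. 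Both are valid, but the paper's is tighter and sidesteps the rationality issues you rightly flag (over a non-perfect field $Z_{\red}$ need not be a subgroup, and the splitting $Z=U\times Z_t$ need not exist even for $Z$ smooth connected). One point to make explicit if you write up the ``clean route'': the claim that $Z_{\overline{k}}/(Z_t)_{\overline{k}}$ is unipotent genuinely requires the prior reduction to $Z$ connected --- otherwise a finite multiplicative piece survives in the quotient --- so that reduction needs to be carried through into the final paragraph rather than left behind in the earlier discussion.
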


\begin{proof} 
By \cite[Thm.\ 16.13(a)]{Mi} there is a subgroup $Z_s$ of $Z$ such that $Z_s$ is of multiplicative type and $Z/Z_s$ is unipotent. Now $f$ induces a surjective homomorphism from $Z/Z_s$ to $T/f(Z_s)$. Since $Z/Z_s$ is unipotent and $T/f(Z_s)$ is a torus, $T/f(Z_s)$ is trivial. Hence we may assume without loss of generality that $Z=Z_s$ is of multiplicative type. Then by \cite[Cor.\ 12.24]{Mi} there is a short exact sequence
\begin{equation} \label{eqn:mult_ses} 1\to Z_t\to Z\to F\to 1, \end{equation}
where $F$ is finite. Now $f$ induces a surjective homomorphism from $Z/Z_t$ to $T/f(Z_t)$.  Since $Z/Z_t$ is finite and $T/f(Z_t)$ is a torus, $T/f(Z_t)$ is trivial. Hence $f(Z_t)=T$.
\end{proof}
% [We can use Lemma~\ref{lem:torus_surj} in the proof of Theorem~\ref{thm:levitating_tori} instead of the current reference to \cite[2.2.12(1)]{CGP}: this is done below.]

\begin{lem}\label{thm:levitating_tori} 
Let $G$ be a pseudo-reductive $k$-group. Suppose the root system of $G_{k^{\sep}}$ is reduced. Let $T'$ be a torus of $G'$ and assume that there exists some subgroup $Z$ of $i_G(G)$ such that $q_{G'}(Z_{k'})=T'$. Then there exists a unique torus $T$ of $G$ such that $\pi'(T_{k'})=T'$. If $T'$ is maximal in $G'$ then $T$ is maximal in $G$. % Moreover $i_G(T) \subseteq Z$. 
\end{lem}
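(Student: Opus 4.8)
The plan is to realise $T$ as the maximal torus of an explicit commutative subgroup of $G$, obtained by lifting a suitable torus from $i_G(G)$ along $i_G$. The step I expect to be the main obstacle is the commutativity of that subgroup: this is precisely where the hypothesis that the root system of $G_{k^{\sep}}$ is reduced enters, via the fact that $\ker i_G$ is then central in $G$ \cite[2.3.4]{CP}.

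Throughout I would use the elementary fact that a subgroup scheme of a torus which is also a subgroup scheme of a unipotent group is trivial, so that the restriction of each of $i_G$, $\pi'$, $q_{G'}$ (all of which have unipotent kernels) to any torus is injective, hence an isomorphism onto its image. Then I would set $C_0 := R_{k'/k}(T')\cap i_G(G)$. Since $Z\subseteq i_G(G)$ and, by Lemma~\ref{adjunctionpropnew}, $Z\subseteq R_{k'/k}(q_{G'}(Z_{k'})) = R_{k'/k}(T')$, we get $Z\subseteq C_0$; combined with Lemma~\ref{adjunctionprop} applied to the smooth group $T'$ this gives $T' = q_{G'}(Z_{k'})\subseteq q_{G'}((C_0)_{k'})\subseteq q_{G'}(R_{k'/k}(T')_{k'}) = T'$, so $q_{G'}((C_0)_{k'}) = T'$. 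As $C_0\subseteq R_{k'/k}(T')$ is commutative it has a unique maximal $k$-torus $(C_0)_t$, whose formation commutes with base change to $k'$, and Lemma~\ref{lem:torus_surj} applied over $k'$ to $q_{G'}\colon (C_0)_{k'}\to T'$ gives $q_{G'}(((C_0)_t)_{k'}) = T'$; by injectivity this is an isomorphism $((C_0)_t)_{k'}\xrightarrow{\sim} T'$, so in particular $\dim(C_0)_t = \dim T'$.

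Next I would set $W := i_G^{-1}((C_0)_t)$, so that $i_G(W) = (C_0)_t$ and $W$ is a central extension of the torus $(C_0)_t$ by the unipotent group $\ker i_G$. Its commutator morphism then descends to a bi-multiplicative morphism $(C_0)_t\times(C_0)_t\to\ker i_G$, which vanishes because for each point of $(C_0)_t$ it restricts to a homomorphism from a group of multiplicative type to a unipotent group, and all such are trivial; hence $W$ is commutative. Taking $T := W_t$, the maximal $k$-torus of $W$, Lemma~\ref{lem:torus_surj} (now over $k$, applied to the surjection $i_G\colon W\to(C_0)_t$) gives $i_G(T) = (C_0)_t$, and therefore $\pi'(T_{k'}) = q_{G'}((i_G(T))_{k'}) = q_{G'}(((C_0)_t)_{k'}) = T'$. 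This proves existence.

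For uniqueness, let $T_0$ be any torus of $G$ with $\pi'((T_0)_{k'}) = T'$. By injectivity, $\dim T_0 = \dim i_G(T_0) = \dim T' = \dim(C_0)_t$, and since $q_{G'}((i_G(T_0))_{k'}) = T'$ we get $i_G(T_0)\subseteq R_{k'/k}(T')\cap i_G(G) = C_0$ by Lemma~\ref{adjunctionpropnew}, hence $i_G(T_0)\subseteq(C_0)_t$, and comparing dimensions $i_G(T_0) = (C_0)_t$. Then $T_0\subseteq W$, so $T_0\subseteq W_t = T$, and equality of dimensions gives $T_0 = T$. Finally, if $T'$ is maximal in $G'$, then for any maximal torus $T_1$ of $G$ containing $T$ the torus $\pi'((T_1)_{k'})$ of $G'$ contains $T'$, hence equals $T'$, so $T_1 = T$ by the uniqueness just proved, and $T$ is maximal in $G$. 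The only genuinely delicate point here is the commutativity of $W$, which rests on the centrality of $\ker i_G$; this fails exactly when the root system is non-reduced, in line with Example~\ref{BC1torusnotlift}.
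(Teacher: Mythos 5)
Your proof is correct and follows essentially the same route as the paper: use Lemmas~\ref{adjunctionpropnew}, \ref{adjunctionprop} and \ref{lem:torus_surj} to produce a torus $T_0$ (your $(C_0)_t$) in $i_G(G)$ that levitates $T'$, then use the centrality of $\ker i_G$ (the point where the reduced root system hypothesis enters) to show $i_G^{-1}(T_0)$ is commutative, with a unique maximal torus $T$ mapping onto $T_0$. The only differences are cosmetic: you unpack the commutativity of $i_G^{-1}(T_0)$ via descent of the commutator to a bi-multiplicative pairing $(C_0)_t\times(C_0)_t\to\ker i_G$ and vanishing of homomorphisms from multiplicative-type groups to unipotent groups, where the paper just cites \cite[2.2.12(1)]{CGP}, and you deduce maximality of $T$ from the uniqueness statement rather than from the rank equality $\rank_k G=\rank_{k'} G'$ used in the paper.
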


\begin{proof} 
Let $T'$ be a torus of $G'$. By assumption there exists a subgroup $Z$ of $i_G(G)$ such that $q_{G'}(Z_{k'})=T'$. Then $Z$ is contained in $R_{k'/k}(T')$ by Lemma \ref{adjunctionpropnew}; in particular $Z$ is commutative, so $Z$ contains a unique maximal torus $T_0$. By \cite[C.4.4]{CGP} this torus $T_0$ remains maximal in $Z$ after base change by $k'/k$. Then applying Lemma \ref{lem:torus_surj} says that $q_{G'}$ sends $(T_0)_{k'}$ onto $T'$. That is, $T_0$ is a smooth levitation of $T'$ in $i_G(G)$.

 By assumption the root system of $G_{k^{\sep}}$ is reduced, so $\ker i_G$ is central in $G$. Then, by the proof of \cite[2.2.12(1)]{CGP}, $i_G^{-1}(T_0)$ is commutative and its unique maximal torus $T$ satisfies $i_G(T)=T_0$. Since $\pi'=q_{G'}\circ (i_G)_{k'}$ we deduce that $\pi'(T_{k'})=T'$. Moreover, $T$ is the unique torus that satisfies this property, as any other such torus must also map onto $T_0$ via $i_G$. % Using the uniqueness of $T_0$ and the maximal torus $T$ of $i_G^{-1}(T_0)$, and the fact that $\dim_k T=\dim_k T_0=\dim_{k'} T'$.

 Finally, observe that $\rank_k(G) = \rank_{k'}(G')$ as $k'/k$ is purely inseparable. The final assertion follows immediately.
\end{proof}

We can now prove Theorem \ref{thm:levitating_crit}.
%and Corollary \ref{thm:levitating_critcor}.

\begin{proof}[Proof of Theorem \ref{thm:levitating_crit}]
 Suppose that the root system of $G_{k^{\sep}}$ is reduced.  We start by observing that each of (a)--(f) is invariant under replacing $k$ (resp., $k'$) with $k^{\sep}$ (resp., $(k')^{\sep}$).  For (a) and (b), note that the formation of the $i_G$ map commutes with separable algebraic field extensions (since this is true of the unipotent radical and minimal fields of definition, by \cite[1.1.9]{CGP}). % for confirmation see the sentence preceding \cite[Rem. 5.3.6]{CGP}.
Moreover, the formation of the derived group commutes with arbitrary field extensions by \cite[Cor.\ 6.19(a)]{Mi}. For (c)--(e), if $H'$ is a subgroup of $G'$ and $(H')_{k^{\sep}}$ levitates to a subgroup $M_1$ of $G_{k^{\sep}}$ then the subgroup $M$ of $G_{k^{\sep}}$ generated by the Galois conjugates of $M_1$ is a levitation of $(H')_{k^{\sep}}$ and $M$ descends to a subgroup $H$ of $G$ which is a levitation of $H'$; it then follows from Lemma~\ref{thm:levitating_tori} that if $H'$ is a (maximal) torus of $G'$ then $H'$ levitates to a (maximal) torus of $G$. For (f), it is clear from the standard construction (see \cite[1.4]{CGP}) that if $G$ is standard then $G_{k^{\sep}}$ is standard, and the converse is \cite[5.2.3]{CGP}; moreover, if $k’$ is the minimal field of definition for $\mathscr{R}_u(G_{\ovl{k}})$ then $(k’)^{\sep}$ is the minimal field of definition for $R_u((G_{k^{\sep}})_{\ovl{k}})$ by \cite[1.1.8]{CGP}, and one can show using a Galois descent argument that if $G$ is pseudo-simple and $S$ is a pseudo-simple factor of $G_{k^{\sep}}$ then the minimal fields of definition for $\mathscr{R}_u((G_{k^{\sep}})_{\ovl{k}})$ and $\mathscr{R}_u(S_{\ovl{k}})$ are equal.
 
Hence we can assume without loss that $k= k^{\sep}$; in particular, $G$ is pseudo-split.

 (a)$\iff$(b). 
Assume $i_G(G)$ contains $\mathscr{D}(R_{k'/k}(G'))$.  Then $q_{G'}$ is smooth and surjective since $G'$ is smooth, and $\ker q_{G'} = \mathscr{R}_u(R_{k'/k}(G')_{k'})$ since $G'$ is reductive. Let $T_0$ be a maximal torus of $R_{k'/k}(G')$. Denote $T':=q_{G'}((T_0)_{k'})$.  Let $Z$ be the unique maximal torus of $Z(R_{k'/k}(G'))$.  We claim that $Z\subseteq i_G(G)$.  To see this, observe that $i_G(G)$ is a pseudo-split pseudo-reductive $k$-group, hence it contains a Levi subgroup $L$ by \cite[3.4.6]{CGP}. Now $L$ is also a Levi subgroup of $R_{k'/k}(G')$ by \cite[9.2.1(2)]{CGP}, % also \cite[7.1.3(i)]{CP1}
so it contains some maximal torus of $R_{k'/k}(G')$. But all such maximal tori are $R_{k'/k}(G')(k)$-conjugate since $k= k^{\sep}$, so $L$ contains the central torus $Z$, and the claim is proved.

We have $Z(R_{k'/k}(G'))= R_{k'/k}(Z(G')$ by \cite[A.5.15(1)]{CGP}, so $q_{G'}(Z(R_{k'/k}(G')))= Z(G')$ by Lemma~\ref{adjunctionpropnew}.  Hence $q_{G'}(Z_{k'})= Z(G')^0$ by Lemma~\ref{lem:torus_surj}.  Choose a subtorus $M$ of $T_0$ such that $MZ= T_0$ and $M\cap Z$ is finite.  Then $q_{G'}(M_{k'})Z(G')^0= q_{G'}(M_{k'})q_{G'}(Z_{k'})= q_{G'}((T_0)_{k'})= T'$ and $q_{G'}(M_{k'})\cap Z(G')^0$ is finite.  It follows that $q_{G'}(M_{k'})\subseteq \mathscr{D}(G')$.  To prove (b), it is enough by (a) to show that $M\subseteq \mathscr{D}(R_{k'/k}(G'))$.

Since $\ker q_{G'}$ is unipotent, combining $q_{G'}$ with base change by $k'/k$ induces an inclusion of root systems $\iota:\Phi(G',T') \hookrightarrow \Phi(R_{k'/k}(G'),T_0)$.  Let $\alpha' \in \Phi(G',T')$ and consider the associated $T'$-root group $U_{\alpha'}$ of $G'$. Let $\alpha:=\iota(\alpha')$ and consider the associated $T_0$-root group $U_{\alpha}$ of $R_{k'/k}(G')$. Note that $U_{\alpha'}$ is 1-dimensional as $G'$ is reductive, and hence $q_{G'}((U_{\alpha})_{k'})=U_{\alpha'}$. 
 Recall that the derived subgroup of a pseudo-reductive $k$-group is perfect \cite[3.1]{CGP}, and that it is generated by all of the root groups \cite[3.1.5]{CGP}.  Consequently $q_{G'}(\mathscr{D}(R_{k'/k}(G'))_{k'})=\mathscr{D}(G')$, % Quotient of perfect group is perfect.
and so \begin{equation}\label{littlebit} q_{G'}^{-1}(\mathscr{D}(G'))=\mathscr{D}(R_{k'/k}(G'))_{k'}\ker q_{G'}.\end{equation}
It follows that $M_{k'}\subseteq \mathscr{D}(R_{k'/k}(G'))_{k'}\ker q_{G'}$.  Since $\ker q_{G'}$ is smooth and unipotent, there is a maximal torus $T_1'$ of $\mathscr{D}(R_{k'/k}(G'))_{k'}\ker q_{G'}$ such that $T_1'\subseteq \mathscr{D}(R_{k'/k}(G'))_{k'}$.  But $k= k^{\sep}$ and $\mathscr{D}(R_{k'/k}(G'))_{k'}$ is normal in $R_{k'/k}(G')_{k'}$, so every maximal torus of $\mathscr{D}(R_{k'/k}(G'))_{k'}\ker q_{G'}$ is contained in $\mathscr{D}(R_{k'/k}(G'))_{k'}$.  Hence $M_{k'}\subseteq \mathscr{D}(R_{k'/k}(G'))_{k'}$.  It follows that $M\subseteq \mathscr{D}(R_{k'/k}(G'))$, so (b) holds.

On the other hand, since the derived subgroup $\mathscr{D}(R_{k'/k}(G'))$ of $R_{k'/k}(G')$ is perfect, it is generated by tori. So if (b) holds then so does (a). 

 (b)$\implies$(c). Let $S'$ be a torus of $G'$, and let $S_0$ denote the unique maximal torus of the smooth commutative group $R_{k'/k}(S')$. 
 %By \cite[C.4.4]{CGP},
 Then $(S_0)_{k'}$ is the unique maximal torus of $R_{k'/k}(S')_{k'}$. Combining this with Lemmas \ref{adjunctionprop} and \ref{lem:torus_surj} tells us that $q_{G'}((S_0)_{k'})=S'$. % So the map $S' \mapsto S_0$ is a bijection from the set of tori of $G'$ to the set of tori of $R_{k'/k}(G')$ (injectivity follows from Lemma \ref{adjunctionprop}, surjectivity from Lemma \ref{adjunctionpropnew} and Lemma \ref{lem:torus_surj}).} % Alternatively see \cite[2.3.2]{CGP}, or one can use Lemma~\ref{prop:tori_levitate_smoothly}(a). The inverse map is given by $S_0 \mapsto q_{G'}((S_0)_{k'})=S'$.
By assumption (b) holds, so $S_0 \subseteq i_G(G)$. Since $\smash{\pi'=q_{G'}\circ (i_G)_{k'}}$, we deduce that $\smash{\pi'(i_G^{-1}(S_0)_{k'})=S'}$. In other words, $i_G^{-1}(S_0)$ is a levitation of $S'$ in $G$.

 (c)$\implies$(d). This follows from Lemma \ref{thm:levitating_tori}, since $\smash{\pi'=q_{G'}\circ (i_G)_{k'}}$.

 (d)$\implies$(e) is clear, as $\rank_k G=\rank_{k'} G'$. 

 (e)$\implies$(b). Assume (e) holds. Let $T_0$ be a maximal torus of $R_{k'/k}(G')$. Consider the maximal torus $q_{G'}((T_0)_{k'})=:T'$ of $G'$. By assumption there exists a maximal torus $T$ of $G$ such that $\pi'(T_{k'})=T'$. Observe that $q_{G'}(i_G(T)_{k'})=\pi'(T_{k'})=T'$, and hence $i_G(T) \subseteq R_{k'/k}(T')$ by Lemma \ref{adjunctionpropnew}. Similarly, $T_0 \subseteq R_{k'/k}(T')$. But $\rank_{k'} G'=\rank_k R_{k'/k}(G')=\rank_k G$, so $T_0=i_G(T)$ must be the unique maximal torus of $R_{k'/k}(T')$. As $T_0$ was chosen arbitrarily, we have shown that (b) holds.

 (a)$\iff$(f). 
%We know from \cite[2.3.4]{CP} that $\ker i_G$ is central in $G$, since the root system of $G$ is reduced.
If $G$ is an absolutely pseudo-simple $k$-group with reduced root system then $G$ is standard if and only if $i_G(G)= \mathscr{D}(R_{k'/k}(G'))$, by \cite[5.3.8]{CGP}.  Now consider the general case, for arbitrary pseudo-reductive $G$. We have a decomposition $G=\mathscr{D}(G)\cdot C$, where $C$ is any Cartan subgroup of $G$, and $\mathscr{D}(G)$ is a commuting product of (absolutely) pseudo-simple $k$-groups $S_i$ for $i=1,...,r$. Observe that $G$ is standard if and only if $S_i$ is standard for each $i$ (this follows from \cite[5.2.3, 5.2.6, 5.3.1]{CGP}).

 Henceforth fix some $i \in \{1,...,r\}$. The aforementioned decomposition of $G$ is preserved by the $i_G$ map. So we have a (co)restriction map $i_G|_{S_i}:S_i \to R_{k'/k}(S_i')$, where $S_i':=\pi'((S_i)_{k'})$. By functoriality $i_G|_{S_i}$ is the map associated to $\pi'|_{(S_i)_{k'}}:(S_i)_{k'} \to S_i'$ under adjunction.

 Assume (a) holds. Since the formation of the derived subgroup commutes with (commuting) products, % it commutes with products by [MSE/2081328], and with central quotients by [MSE/39124]
and as $S_i$ is perfect, we see that $i_G|_{S_i}$ maps onto $\mathscr{D}(R_{k'/k}(S_i'))$. It follows that $k'/k$ is the minimal field of definition for $\mathscr{R}_u((S_i)_{\overline{k}})$, as otherwise surjectivity would fail by dimension considerations. % Certainly $\mathscr{R}_u((S_i)_{\overline{k}})$ descends to $k'/k$, so $(\ker i_{S_i})_{k'}=\ker i_G|_{S_i}$, say its dimension is d. Since $i_G|_{S_i}$ maps onto $\mathscr{D}(R_{k'/k}(S_i'))$, it follows that $\dim(S_i)>d+([k':k]-1)(\dim S_i')$ -- the minus one term coming from taking the derived subgroup, it can be a bit less dim but NOT by a full $\dim(S_i')$ -- if the mfod for $\mathscr{R}_u((S_i)_{\overline{k}})$ is $k''/k \subsetneq k'/k$, then $\dim(S_i) \leq d + \dim R_{k'':k}(S_i') = d+[k'':k] \dim S_i' \leq d+([k':k]-1)\dim S_i'$, which is a contradiction.
So $i_G|_{S_i}=i_{S_i}$, i.e., it is the analogue of the $i_G$ map for $S_i$. Since $k=k^{\sep}$ we can apply \cite[5.3.8]{CGP}, so $S_i$ is standard. Hence $\mathscr{D}(G)$ is a commuting product of standard pseudo-simple $k$-groups, and so $G$ is itself standard.

 Conversely, assume (f) holds. Since $G$ is standard, $S_i$ is also standard. By assumption $k'/k$ is the minimal field of definition for $\mathscr{R}_u((S_i)_{\overline{k}})$, so we can again apply \cite[5.3.8]{CGP}, which tells us that $i_G|_{S_i}=i_{S_i}$ maps onto $\mathscr{D}(R_{k'/k}(S_i'))$. Again using the fact that the formation of the derived subgroup commutes with (commuting) products, it follows that $i_G(G)$ contains $\mathscr{D}(R_{k'/k}(G'))$.
\end{proof}

% \underline{Proof of Corollary \ref{thm:levitating_critcor}.}

We can extend the results of Theorem~\ref{thm:levitating_crit} to subgroups of $G'$ that are generated by tori.

\begin{cor}\label{thm:levitating_critcor} Let $G$ be a pseudo-reductive $k$-group. Suppose the root system of $G_{k^{\sep}}$ is reduced, and that $G$ satisfies the equivalent conditions of Theorem \ref{thm:levitating_crit}. Let $H'$ be a subgroup of $G'$ that is generated by tori (for example, this holds if $H'$ is perfect). Then there exists a smooth subgroup $H$ of $G$ such that $\pi'(H_{k'})=H'$.
\end{cor}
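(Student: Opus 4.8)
The plan is to reduce the problem to lifting finitely many tori and then to glue their lifts together inside $G$, using condition (d) of Theorem~\ref{thm:levitating_crit}.

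First I would observe that $H'$, being an algebraic group, is already generated by finitely many of the tori in the given generating family. Writing $H'$ as the subgroup of $G'$ generated by a family of tori $\{S'_i\}_{i\in I}$, the subgroups generated by the finite subfamilies form a directed system of closed connected subgroups of $G'$; since their dimensions are bounded by $\dim G'$ and increase with the indexing set, a dimension count forces one of them already to equal $H'$. So we may assume $H'=\langle S'_1,\dots,S'_m\rangle$ for finitely many tori $S'_1,\dots,S'_m$ of $G'$. (For the parenthetical assertion: if $H'$ is perfect then it is generated by tori -- this is exactly the fact invoked in the proof of Theorem~\ref{thm:levitating_crit} for $\mathscr{D}(R_{k'/k}(G'))$ -- so that case is subsumed.)

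Next, since $G$ satisfies the equivalent conditions of Theorem~\ref{thm:levitating_crit}, condition (d) applies: for each $j\in\{1,\dots,m\}$ there is a torus $S_j$ of $G$ with $\pi'((S_j)_{k'})=S'_j$. Set $H:=\langle S_1,\dots,S_m\rangle$, the algebraic subgroup of $G$ generated by $S_1,\dots,S_m$. As each $S_j$ is smooth and connected, so is $H$ (the subgroup generated by finitely many smooth connected $k$-subgroups is smooth and connected; this is standard, see, e.g., \cite{Mi}). Since extension of scalars commutes with forming the generated subgroup, $H_{k'}=\langle (S_1)_{k'},\dots,(S_m)_{k'}\rangle$, and since the image under a homomorphism of algebraic groups of a subgroup generated by a finite family is the subgroup generated by the images, we get
\[
\pi'(H_{k'})=\big\langle \pi'((S_1)_{k'}),\dots,\pi'((S_m)_{k'})\big\rangle=\langle S'_1,\dots,S'_m\rangle=H'.
\]
Thus $H$ is the desired smooth levitation of $H'$ in $G$.

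The step that needs genuine care -- the main obstacle, though it is not deep -- is the handling of the scheme-theoretic notion of the subgroup generated by a family of subgroups in this possibly non-smooth setting: one must know that such a subgroup is smooth and connected when its generators are, that its formation commutes with the (purely inseparable) base change $k'/k$, and that $\pi'$ carries it onto the subgroup generated by the images. Once these formalities are in place, the corollary follows immediately from Theorem~\ref{thm:levitating_crit}(d), with essentially no further computation.
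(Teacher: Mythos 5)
Your argument is correct and is essentially the proof given in the paper: lift a generating family of tori via Theorem~\ref{thm:levitating_crit}(d), take the subgroup $H$ of $G$ they generate, and invoke the facts that the subgroup generated by smooth connected subgroups is smooth and connected and that its formation commutes with base change (the paper cites \cite[2.47, 2.48]{Mi}). The only addition you make is an explicit reduction to a finite generating family, which the paper silently assumes by writing $\{T'_i \mid i=1,\dots,l\}$; that reduction is fine in substance, though the phrase ``a dimension count forces'' elides the small further point that two smooth connected closed subgroups of the same dimension, one contained in the other, must coincide (so that once the dimension stabilises in the directed system, the subgroup does too, and a maximal member of the system must then contain every generating torus and hence equal $H'$).
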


\begin{proof}
This is an immediate consequence of Theorem \ref{thm:levitating_crit}(d); simply take a generating set of tori $\{T'_i \hspace{0.5mm}|\hspace{0.5mm} i =1,...,l\}$ of $H'$, levitate each $T'_i$ to a torus $T_i$ of $G$, and let $H$ be the subgroup of $G$ generated by $\{T_i \hspace{0.5mm}|\hspace{0.5mm} i =1,...,l\}$. The formation of $H$ commutes with base change by $k'/k$ by \cite[2.47]{Mi}, and $H$ is smooth by \cite[2.48]{Mi}, so indeed $H$ is a smooth levitation of $H'$ in $G$.
\end{proof}

\subsection{Levitating maximal rank subgroups}\label{levitatingmaxranksubgroupssubsection} 

Next we study levitations of maximal rank subgroups.  Recall the map $\pi'(k):G(k) \to G'(k')$ from (\ref{abusemap}).

\begin{thm}\label{conjlevitationsconverse} Suppose $k=k^{\sep}$. Let $G$ be a pseudo-reductive $k$-group. Let $H$ be a smooth subgroup of $G$ such that $\pi'(H_{k'})=:H'$ has maximal rank in $G'$. Let $H'_1$ be another subgroup of $G'$ that is $G'(k')$-conjugate to $H'$. If there exists a smooth subgroup $H_1$ of $G$ satisfying $\pi'((H_1)_{k'})=H'_1$ then there exists $g'$ in the image of $\pi'(k)$ such that $H'_1=g'H'(g')^{-1}$.
\end{thm}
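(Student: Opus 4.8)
The plan is to reduce to a statement about maximal tori and Weyl groups. The first point is that, because $H$ is smooth, the restriction $\pi'|_{H_{k'}}\colon H_{k'}\to H'$ is a surjective homomorphism of smooth affine $k'$-groups, so it carries a maximal torus $T$ of $H$ onto a maximal torus $T':=\pi'(T_{k'})$ of $H'$. Since $H'$ has maximal rank in $G'$, the torus $T'$ is maximal in $G'$, so $\dim T'=\rank_{k'} G'=\rank_k G$ (the last equality because $k'/k$ is purely inseparable, as in the proof of Lemma~\ref{thm:levitating_tori}); as $\dim T'\leq\dim T\leq\rank_k G$, we conclude that $T$ is in fact a maximal torus of $G$, contained in $H$, with $\pi'(T_{k'})=T'\subseteq H'$. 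Applying the same reasoning to $H_1$ -- noting that $H'_1$ is $G'(k')$-conjugate to $H'$ and hence also has maximal rank in $G'$ -- produces a maximal torus $T_1$ of $G$ with $T_1\subseteq H_1$ and $\pi'((T_1)_{k'})=:T'_1$ a maximal torus of $G'$ contained in $H'_1$.

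Next I would normalise the conjugating element. Since $k=k^{\sep}$ and $k'/k$ is finite purely inseparable, $k'$ is separably closed, so maximal tori of the smooth connected affine group $(H'_1)^{\circ}$ are conjugate by a $k'$-point. Writing $H'_1=h'H'(h')^{-1}$, both $h'T'(h')^{-1}$ and $T'_1$ are maximal tori of $H'_1$, so after replacing $h'$ by $y'h'$ for a suitable $y'\in (H'_1)^{\circ}(k')\subseteq H'_1(k')$ we may assume $h'T'(h')^{-1}=T'_1$, while still $h'H'(h')^{-1}=H'_1$. Because maximal tori of $G$ are $G(k)$-conjugate (recall $k=k^{\sep}$), choose $g_0\in G(k)$ with $g_0Tg_0^{-1}=T_1$ and put $g'_0:=\pi'(k)(g_0)$; then $g'_0T'(g'_0)^{-1}=\pi'((T_1)_{k'})=T'_1$, so $n'_0:=(g'_0)^{-1}h'$ lies in $N_{G'}(T')(k')$ and $h'=g'_0n'_0$.

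The remaining, and most delicate, step is to adjust $g_0$ within its coset $g_0N_G(T)(k)$ so that the resulting element $n'_0$ lands in $N_{G'}(H')(k')$. Replacing $g_0$ by $g_0n$ with $n\in N_G(T)(k)$ replaces $n'_0$ by $\pi'(k)(n)^{-1}n'_0$, so it is enough to show: for each $n'_0\in N_{G'}(T')(k')$ there is $n\in N_G(T)(k)$ with $\pi'(k)(n)^{-1}n'_0\in T'(k')$. This follows from two facts: (i) $N_G(T)(k)$ surjects onto the Weyl group $W(G,T)=N_G(T)/Z_G(T)$, since $N_G(T)$ is smooth and $k=k^{\sep}$, so its $k$-points meet every connected component; and (ii) $\pi'$ induces a surjection $W(G,T)\twoheadrightarrow W(G',T')$. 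For (ii), since $\ker\pi'=\mathscr{R}_u(G_{k'})$ is unipotent and normal, $\pi'$ carries $N_{G_{k'}}(T_{k'})$ into $N_{G'}(T')$, and using that $\pi'$ is surjective on $k'$-points and that maximal tori of $T_{k'}\mathscr{R}_u(G_{k'})$ are $\mathscr{R}_u(G_{k'})(k')$-conjugate (again because $k'$ is separably closed), one checks that $N_{G_{k'}}(T_{k'})(k')\to N_{G'}(T')(k')$ is surjective; passing to Weyl groups (and using $Z_{G'}(T')=T'$ as $G'$ is reductive) gives (ii). Granting (i) and (ii), lift the $W(G',T')$-class of $n'_0$ to $n\in N_G(T)(k)$; then $\pi'(k)(n)$ has the same class, so $t':=\pi'(k)(n)^{-1}n'_0\in Z_{G'}(T')(k')=T'(k')$. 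Finally set $g:=g_0n\in G(k)$ and $g':=\pi'(k)(g)=g'_0\pi'(k)(n)$; then $(g')^{-1}h'=\pi'(k)(n)^{-1}n'_0=t'\in T'(k')\subseteq H'(k')$, so $h'=g't'$ with $t'$ normalising $H'$, whence $H'_1=h'H'(h')^{-1}=g'H'(g')^{-1}$ with $g'$ in the image of $\pi'(k)$, as required.

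I expect the main obstacle to be fact (ii) -- verifying that $\pi'$ induces a surjection on Weyl groups -- together with the attendant bookkeeping of passing between $k$- and $k'$-rational points (e.g.\ surjectivity of $\pi'$ on $k'$-points and of normaliser maps on rational points over the separably closed fields $k$ and $k'$). Everything else is either immediate from the smoothness hypotheses and earlier lemmas or a matter of the classical conjugacy theorems for maximal tori.
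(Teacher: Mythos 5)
Your proposal is correct and follows essentially the same strategy as the paper's proof: reduce to matching up maximal tori (using that a smooth levitation of a maximal rank subgroup has maximal rank), transfer the $G'(k')$-conjugacy to a conjugacy by an element of $N_{G'}(T')(k')$ after conjugating by some $g_0\in G(k)$, and then show that $\pi'(k)\colon N_G(T)(k)\to N_{G'}(T')(k')/T'(k')$ is surjective. The only cosmetic difference is that where the paper proves the Weyl-group surjectivity by invoking \cite[3.2.1]{CGP} together with the fact that $N_G(T)/Z_G(T)$ is a constant group, you give a slightly more hands-on argument using surjectivity of $\pi'$ on $k'$-points and conjugacy of maximal tori in $(\pi')^{-1}(T')$, which is also valid.
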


% \textcolor{red}{Open question.} \textbf{For Theorem \ref{conjlevitationsconverse}, can we generalise from ``maximal rank'' to ``regular''? Can we relax ``smoothly levitates'' to ``levitates'', if in addition the root system of $G$ is reduced? Can we remove the $k=k^{\sep}$ assumption, perhaps using \cite[13.3.1, 11.2.8(i)]{Sp}?}

\begin{proof}
Let $H$ be a smooth levitation of $H'$ in $G$. Combining \cite[C.4.4]{CGP} with \cite[11.14]{Bo} tells us that $H$ is a maximal rank subgroup of $G$. So let $T$ be a maximal torus of $H$; then $\pi'(T_{k'})=:T'$ is a maximal torus of $H'$. Similarly let $H_1$ be a smooth levitation of $H'_1$ in $G$, let $T_1$ be a maximal torus of $H_1$; then $\pi'((T_1)_{k'})=:T'_1$ is a maximal torus of $H'_1$.

 Since $k=k^{\sep}$ there exists $x \in G(k)$ such that $T_1=xTx^{-1}$. 
% , that is, $\pi'(k)$ is the composition of the canonical inclusion $G(k)\hookrightarrow G(k')$ with $\pi'(k'):G(k') \to G'(k')$.
 Denote $x':=\pi'(k)(x)$. Consider the subgroups $H'$ and $(x')^{-1}H'_1x'$ of $G'$; by assumption they are $G'(k')$-conjugate, but they share a maximal torus $T'$, so there exists $n' \in N_{G'}(T')(k')$ such that $(x')^{-1}H'_1x'=n'H'(n')^{-1}$.

 We claim that $\pi'(k)$ induces a surjection $N_G(T)(k) \to N_{G'}(T')(k')/T'(k')$. Given the claim, there exists $n \in N_G(T)(k)$ such that $\pi'(k)(n)=n't'$ for some $t' \in T'(k')$. Let $g:=xn$ and let $g':=\pi'(k)(g)=x'n't'$; then indeed $H'_1=g'H'(g')^{-1}$. % as $H'$ is normalised by $T'$.

 It remains to prove the claim. We need the following ingredients. First observe that $N_G(T)/Z_G(T)$ is a constant $k$-group, so the canonical inclusion \begin{equation}\label{onee}(N_G(T)/Z_G(T))(k) \to (N_G(T)/Z_G(T))(k')\end{equation} is an isomorphism. Note that $Z_{G'}(T')=T'$, as $G'$ is reductive. Then by \cite[3.2.1]{CGP}, as $\ker\pi'$ is unipotent, $\pi'$ induces an isomorphism \begin{equation}\label{twoo}(N_G(T)/Z_G(T))(k') \to (N_{G'}(T')/T')(k').\end{equation} Next observe that $(N_G(T)/Z_G(T))(k)=N_G(T)(k)/Z_G(T)(k)$: this holds as $k$ is separably closed and $Z_G(T)=N_G(T)^{\circ}$ is smooth. Similarly $(N_{G'}(T')/T')(k')=N_{G'}(T')(k')/T'(k')$. Then composing the natural projection $N_G(T)(k)\to N_G(T)(k)/Z_G(T)(k)$ with $(\ref{onee})$ and $(\ref{twoo})$ gives us the desired surjection $N_G(T)(k) \to N_{G'}(T')(k')/T'(k')$. This completes the proof.
\end{proof}

\begin{cor}\label{conjlevitationsconversecor} Suppose $k=k^{\sep}$. Let $G$ be a pseudo-reductive $k$-group with reduced root system. Let $Z$ be a subgroup of $G$ such that $\pi'(Z_{k'})=:T'$ is a maximal torus of $G'$. Let $T'_1$ be another maximal torus of $G'$. Then there exists a subgroup $Z_1$ of $G$ satisfying $\pi'((Z_1)_{k'})=T'_1$ if and only if there exists $g'$ in the image of $\pi'(k)$ such that $T'_1=g'T'(g')^{-1}$.
\end{cor}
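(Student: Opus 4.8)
The plan is to deduce this from Theorem~\ref{conjlevitationsconverse} together with Lemma~\ref{thm:levitating_tori}. The "if" direction is immediate: it is exactly Proposition~\ref{conjlevitations}, since if $g'$ lies in the image of $\pi'(k)$ and $T'_1 = g'T'(g')^{-1}$, and $Z$ is a levitation of $T'$ in $G$, then $gZg^{-1}$ is a levitation of $T'_1$ in $G$ for any $g \in G(k)$ with $\pi'(k)(g) = g'$.

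For the "only if" direction, suppose there exists a subgroup $Z_1$ of $G$ with $\pi'((Z_1)_{k'}) = T'_1$. The first step is to upgrade from the given (possibly non-smooth) subgroups $Z$ and $Z_1$ to \emph{smooth} levitations, so that Theorem~\ref{conjlevitationsconverse} applies. Since the root system of $G_{k^{\sep}} = G$ is reduced and $T'$ (resp. $T'_1$) is a torus of $G'$ that levitates in $G$ via $i_G$ (using $\pi' = q_{G'} \circ (i_G)_{k'}$, so that $q_{G'}(i_G(Z)_{k'}) = T'$), Lemma~\ref{thm:levitating_tori} produces a torus $S$ of $G$ with $\pi'(S_{k'}) = T'$, and similarly a torus $S_1$ of $G$ with $\pi'((S_1)_{k'}) = T'_1$. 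These tori are in particular smooth subgroups of $G$, and $T' = \pi'(S_{k'})$ has maximal rank in $G'$ by hypothesis, so the hypotheses of Theorem~\ref{conjlevitationsconverse} are met with $H = S$, $H_1 = S_1$, $H' = T'$, $H'_1 = T'_1$; note also that $T'$ and $T'_1$ are $G'(k')$-conjugate since $k = k^{\sep}$ and all maximal tori of the reductive $k'$-group $G'$ are conjugate over $k' = (k^{\sep})$ (here we use that $G'$ is split, as $G$ is pseudo-split, or simply that maximal tori of a connected group scheme over a separably closed field are conjugate). Applying Theorem~\ref{conjlevitationsconverse} yields $g'$ in the image of $\pi'(k)$ with $T'_1 = g'T'(g')^{-1}$, as required.

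The only point requiring care is verifying that Lemma~\ref{thm:levitating_tori} applies, i.e.\ that from $\pi'(Z_{k'}) = T'$ one obtains a subgroup of $i_G(G)$ whose image under $q_{G'}$ (after base change to $k'$) equals $T'$: taking $i_G(Z) \subseteq i_G(G)$ and using $q_{G'}(i_G(Z)_{k'}) = q_{G'}((i_G)_{k'}(Z_{k'})) = \pi'(Z_{k'}) = T'$ does exactly this, so the hypothesis of Lemma~\ref{thm:levitating_tori} is satisfied. I do not anticipate a genuine obstacle here; the content of the corollary is entirely in Theorem~\ref{conjlevitationsconverse}, and this is a routine packaging of that result once the smoothing step via Lemma~\ref{thm:levitating_tori} is invoked to replace arbitrary levitations of tori by torus levitations.
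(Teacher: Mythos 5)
Your proposal is correct and follows essentially the same route as the paper: upgrade both levitations to smooth torus levitations via Lemma~\ref{thm:levitating_tori}, then apply Proposition~\ref{conjlevitations} for the ``if'' direction and Theorem~\ref{conjlevitationsconverse} for the ``only if'' direction. One small slip: you write $k' = k^{\sep}$, but $k'$ is a (possibly nontrivial) purely inseparable extension of $k = k^{\sep}$; the relevant fact is simply that $k'$ is then separably closed, so maximal tori of $G'$ are $G'(k')$-conjugate, which is what you in fact use.
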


\begin{proof}[Proof of Corollary \ref{conjlevitationsconversecor}] 
Since $k'$ is separably closed, $T'$ and $T'_1$ are $G'(k')$-conjugate. By Lemma \ref{thm:levitating_tori}, $T'$ smoothly levitates in $G$. Similarly, $T'_1$ levitates in $G$ if and only if it smoothly levitates in $G$. The result then follows from combining Proposition \ref{conjlevitations} with Theorem \ref{conjlevitationsconverse}.
\end{proof}

\subsection{Levitating regular subgroups}\label{levitatinggeneralsubsection}

Recall that a subgroup of a pseudo-reductive group is \emph{regular} if it is normalised by some maximal torus. In this subsection we prove Theorem \ref{thm:levitating_regsubgroups}, which concerns smooth levitations of regular smooth subgroups. We first need the following lemmas.

\begin{lem}\label{secondlem} Let $G$ be a smooth connected affine $k$-group. Let $S$ be a torus of $G$. Then $Z_G(S) \cap \mathscr{R}_u(G)=\mathscr{R}_u(Z_G(S))$. 
\begin{proof} Let $\pi:G \to G/\mathscr{R}_u(G)=:\overline{G}$ be the natural projection, and denote $\overline{S}:=\pi(S)$. Observe that $\pi(Z_G(S))=Z_{\overline{G}}(\overline{S})$ by \cite[11.14, Cor.\ 2]{Bo}. Now $Z_{\overline{G}}(\overline{S})$ is pseudo-reductive by \cite[1.2.4]{CGP}, hence $\pi(\mathscr{R}_u(Z_G(S)))$ is trivial. So $\mathscr{R}_u(Z_G(S)) \subseteq \mathscr{R}_u(G)$. % [Alternatively one could tweak the proof of \cite[1.2.4]{CGP}, maybe using Corollary \ref{largestunip} of Damian's notes.]

For the opposite inclusion, certainly $Z_G(S) \cap \mathscr{R}_u(G)$ is a unipotent normal subgroup of $Z_G(S)$. The same argument as in \cite[C.2.23, proof, par.\ 4]{CGP} tells us that $Z_G(S) \cap \mathscr{R}_u(G)=\mathscr{R}_u(G)^S$ is smooth and connected. Hence $Z_G(S) \cap \mathscr{R}_u(G) \subseteq \mathscr{R}_u(Z_G(S))$.
\end{proof}
\end{lem}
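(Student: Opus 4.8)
The plan is to prove the two inclusions separately; throughout I use that $Z_G(S)$ is smooth (as $S$, being of multiplicative type, is linearly reductive) and connected (by a standard result, e.g.\ \cite[11.12]{Bo}), so that $\mathscr{R}_u(Z_G(S))$ is defined. For the inclusion $\mathscr{R}_u(Z_G(S)) \subseteq Z_G(S) \cap \mathscr{R}_u(G)$, since $\mathscr{R}_u(Z_G(S)) \subseteq Z_G(S)$ holds trivially it suffices to show $\mathscr{R}_u(Z_G(S)) \subseteq \mathscr{R}_u(G)$, and for this I would pass to the quotient $\pi : G \to \overline{G} := G/\mathscr{R}_u(G)$, which is pseudo-reductive by maximality of $\mathscr{R}_u(G)$. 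Putting $\overline{S} := \pi(S)$, one has $\pi(Z_G(S)) = Z_{\overline{G}}(\overline{S})$ by the standard description of images of centralisers of tori under surjective homomorphisms (\cite[11.14, Cor.~2]{Bo}, which uses linear reductivity of $S$). Since the centraliser of a torus in a pseudo-reductive group is pseudo-reductive (\cite[1.2.4]{CGP}), $Z_{\overline{G}}(\overline{S})$ has trivial unipotent radical, so the smooth connected unipotent normal subgroup $\pi(\mathscr{R}_u(Z_G(S)))$ of it is trivial; hence $\mathscr{R}_u(Z_G(S)) \subseteq \ker \pi = \mathscr{R}_u(G)$.

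For the reverse inclusion, set $N := Z_G(S) \cap \mathscr{R}_u(G)$, which as a $k$-group scheme coincides with the fixed-point subscheme $\mathscr{R}_u(G)^S$ for the conjugation action of $S$ on the normal subgroup $\mathscr{R}_u(G)$. To deduce $N \subseteq \mathscr{R}_u(Z_G(S))$ it suffices that $N$ be smooth, connected, unipotent, and normal in $Z_G(S)$. Unipotence is immediate from $N \subseteq \mathscr{R}_u(G)$, and normality in $Z_G(S)$ holds because $Z_G(S)$ normalises both $\mathscr{R}_u(G)$ and itself, hence $N$. The real content is that $\mathscr{R}_u(G)^S$ is smooth and connected: smoothness follows from linear reductivity of $S$ (fixed points of a group of multiplicative type acting on a smooth affine group are smooth), and connectedness from $\mathscr{R}_u(G)$ being connected and unipotent. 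For this last point I would invoke the argument in \cite[proof of C.2.23]{CGP} (see also \cite[A.8.10]{CGP}). Granting it, $N$ is a smooth connected unipotent normal subgroup of $Z_G(S)$ and therefore lies in $\mathscr{R}_u(Z_G(S))$.

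The only ingredient that is not purely formal is the smoothness and connectedness of $\mathscr{R}_u(G)^S$, which I expect to be the main obstacle. A by-hand proof would choose an $S$-stable composition series of $\mathscr{R}_u(G)$ with one-dimensional successive quotients --- $S$ being linearly reductive, it respects $S$-stable filtrations and acts semisimply on the graded pieces --- and then propagate smoothness and connectedness up the series; but when $k$ is imperfect $\mathscr{R}_u(G)$ may be wound, so the quotients need not be isomorphic to $\Ga$, and quoting the general fixed-point result is the cleaner path.
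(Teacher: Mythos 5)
Your proof is correct and is essentially the paper's proof: you establish $\mathscr{R}_u(Z_G(S)) \subseteq \mathscr{R}_u(G)$ by passing to $\overline{G} = G/\mathscr{R}_u(G)$ and invoking \cite[11.14, Cor.~2]{Bo} together with \cite[1.2.4]{CGP}, and you establish the reverse inclusion by identifying $Z_G(S)\cap\mathscr{R}_u(G)$ with $\mathscr{R}_u(G)^S$ and quoting smoothness and connectedness of the $S$-fixed points from \cite[C.2.23]{CGP}, exactly as in the paper. The extra remarks you add (explicitly noting $\ker\pi=\mathscr{R}_u(G)$, that $Z_G(S)$ is smooth and connected, and the sketch of a filtration argument) are sound but amount to fleshing out the same route rather than a different one.
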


\noindent Henceforth we use the notation of the previous subsection. That is, $G$ is a pseudo-reductive $k$-group, $k'/k$ is the minimal field of definition for $\mathscr{R}_u(G_{\ovl{k}})$, and $\pi':G_{k'} \to G_{k'}/\mathscr{R}_u(G_{k'}):=G'$ is the natural projection.

\begin{lem}\label{descenttorus} Let $S'$ be a split torus of $G'$. Suppose that $S'$ levitates to a torus $S$ in $G$. Then any subgroup of $S'$ also levitates in $G$.
\begin{proof} Every split multiplicative type $k$-group of $M$ uniquely descends to $\Z$ (since $M \mapsto X(M)$ is an equivalence from the category of split multiplicative type $k$-groups of to the opposite category of finitely generated $\Z$-modules; see \cite[12.23]{Mi}). Similarly, any embedding of split multiplicative type $k$-groups uniquely descends to $\Z$.

 Let $M'$ be a subgroup of $S'$. By the above remark the embedding $S_{k'} \cap (\pi')^{-1}(M') \hookrightarrow S_{k'}$ uniquely descends to $\Z$ and hence to $k$. Let us call this $k$-descent $M \hookrightarrow S$. Then $\pi'$ sends $M_{k'}$ isomorphically onto $M'$, since $\ker \pi'$ is unipotent. % As $$\pi'(S_{k'} \cap (\pi')^{-1}(M'))=\pi'|_{S_{k'}}(\pi'|_{S_{k'}})^{-1}(M')=M'.$$
\end{proof}
\end{lem}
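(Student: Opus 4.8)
The plan is to exploit the fact that $\ker\pi'$ is unipotent while $S_{k'}$ is a torus: this forces the restriction $\pi'|_{S_{k'}}\colon S_{k'}\to S'$ to be an isomorphism. Once that is established, an arbitrary subgroup $M'$ of $S'$ can be pulled back through this isomorphism, and the resulting subgroup scheme of $S_{k'}$ will descend to a subgroup $M$ of $S$ over $k$ because $S$ turns out to be $k$-split.

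First I would show that $\pi'|_{S_{k'}}$ is an isomorphism. Its scheme-theoretic kernel $S_{k'}\cap\ker\pi'$ is at once a subgroup of the torus $S_{k'}$, hence of multiplicative type, and a subgroup of the unipotent group $\ker\pi'=\mathscr{R}_u(G_{k'})$, hence unipotent; the only $k'$-group that is simultaneously of multiplicative type and unipotent is the trivial one. Since $\pi'(S_{k'})=S'$ by hypothesis, $\pi'|_{S_{k'}}$ is a surjective homomorphism with trivial kernel, hence an isomorphism $S_{k'}\xrightarrow{\sim}S'$. In particular $S_{k'}$ is $k'$-split; as $k'/k$ is purely inseparable it follows that $S$ is $k$-split, since the $\Gal(k^{\sep}/k)$-action on $X(S_{k^{\sep}})$ is identified with the (trivial) $\Gal((k')^{\sep}/k')$-action on $X((S_{k'})_{(k')^{\sep}})$.

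Now let $M'$ be an arbitrary subgroup of $S'$. Being a subgroup scheme of the split torus $S'$, it is diagonalisable, and corresponds to a surjection $X(S')\twoheadrightarrow X(M')$ of character modules. Via the isomorphism $X(S)=X(S_{k'})\xrightarrow{\sim}X(S')$ induced by $\pi'$ -- where the equality on the left uses that $S$ is $k$-split -- this surjection determines a diagonalisable subgroup $M$ of $S$ defined over $k$, and by construction $\pi'$ carries $M_{k'}$ isomorphically onto $M'$, so $\pi'(M_{k'})=M'$ and $M$ is a levitation of $M'$ in $G$. In practice I would phrase this through the base-field-independent antiequivalence between diagonalisable groups and finitely generated abelian groups: the embedding $S_{k'}\cap(\pi')^{-1}(M')\hookrightarrow S_{k'}$ descends canonically to $\Z$, and since $S$ is split this $\Z$-model base-changes over $k$ to an embedding $M\hookrightarrow S$. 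The only step that is not purely formal is the descent of splitness from $S_{k'}$ to $S$ along the purely inseparable extension $k'/k$; everything after that is bookkeeping with character modules, so I expect that (minor) point to be the main obstacle.
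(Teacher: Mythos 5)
Your proposal is correct and is essentially the paper's argument: both work through the antiequivalence between diagonalisable groups and finitely generated abelian groups, pulling $M'$ back along $\pi'|_{S_{k'}}$ to a diagonalisable subgroup of $S_{k'}$, descending it, and using unipotence of $\ker\pi'$ to see the descended group maps isomorphically onto $M'$. The one place you are slightly more careful than the paper is in making explicit that $S$ is $k$-split (which is needed so that the $\Z$-model of $S_{k'}$ actually base-changes to $S$ over $k$, not merely to some split $k$-torus); the paper leaves this implicit. Your argument for it -- that splitness descends along a purely inseparable extension because the $\Gal(k^{\sep}/k)$-module $X(S_{k^{\sep}})$ is controlled by the $\Gal((k')^{\sep}/k')$-module $X(S_{(k')^{\sep}})$ -- is the right one.
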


% \underline{Proof of Theorem \ref{thm:levitating_regsubgroups}.}

% Recall that $G$ is a pseudo-reductive $k$-group. 

%We now prove Theorem \ref{thm:levitating_regsubgroups}.  
%Let $H'$ be a smooth subgroup of $G'$, and let $T'$ be a maximal torus of $G'$ that normalises $H'$. Assume that $T'$ levitates to a (maximal) torus $T$ of $G$.

\begin{proof}[Proof of Theorem \ref{thm:levitating_regsubgroups}] 
 Let $H'$ be a smooth subgroup of $G'$, and let $T'$ be a maximal torus of $G'$ that normalises $H'$. Assume that $T'$ levitates to a (maximal) torus $T$ of $G$.  We first prove (i). Using a standard argument of Galois descent, we may assume without loss of generality that $k=k^{\sep}$ (cf.\ the remarks about (c)--(e) in the proof of Theorem~\ref{thm:levitating_crit}); note that the formation of normalisers commutes with base change.
% Explicitly, if we show that $H_{k^{\sep}}$ is the largest smooth subgroup of $G_{k^{\sep}}$ that satisfies the required property then any $\Gal(k^{\sep}/k)$-conjugate of $H_{k^{\sep}}$ in $G_{k^{\sep}}$ also satisfies that property. % Since $H'$ is defined over $k$.
%The subgroup of $G_{k^{\sep}}$ generated by all $\Gal(k^{\sep}/k)$-conjugates of $H_{k^{\sep}}$ is smooth and hence it must equal $H_{k^{\sep}}$, so indeed $H_{k^{\sep}}$ admits a $k$-descent.
%For the second assertion, we just need the fact that the formation of normalisers commutes with base change.

 Recall that maximal tori in reductive $k$-groups are self-centralising. Hence $$T' \subseteq \pi'(Z_G(T)_{k'}) \subseteq Z_{G'}(T')=T'.$$ That is, the Cartan subgroup $Z_G(T)$ is a levitation of $T'$ in $G$. Indeed $Z_G(T)$ is smooth by \cite[17.44]{Mi}.

 Since $T$ is split and $\pi'$ restricts to an isomorphism $T_{k'} \to T'$, extending scalars by $k'/k$ induces a natural bijection $\iota:X(T') \to X(T)$. One can check that $\iota$ restricts to an injection between the respective root systems $\Phi(G',T') \hookrightarrow \Phi(G,T)$, mapping onto the set of non-multipliable roots of $\Phi(G,T)$ (see for instance \cite[2.3.10]{CGP}). Since $H'$ is normalised by $T'$, $\iota$ further restricts to an injection $\Phi(H',T') \hookrightarrow \Phi(G,T)$. (Note that the set of roots $\Phi(H',T')$ is still defined even when $H'$ is not pseudo-reductive; it just may no longer be symmetric about the origin.)

 Since $H'$ is smooth and $T'$-stable, \cite[13.20]{Bo} tells us that $(H')^{\circ}$ is generated by the $T'$-root groups $U_{\alpha'}$ for each $\alpha' \in \Phi(H',T')$ along with the torus $(H' \cap T')^{\circ}_{\red}=:S'$. By Lemma \ref{descenttorus} (and its proof), there exists a subtorus $S$ of $T$ such that $S$ is a levitation of $S'$.

 Let $\alpha' \in \Phi(H',T')$ and consider the corresponding $T'$-root group $U_{\alpha'}$ of $H'$. Define a root $\alpha \in \Phi(G,T)$ as follows: if $\iota(\alpha')$ is non-divisible in $\Phi(G,T)$ then set $\alpha=\iota(\alpha')$, otherwise set $\alpha=\iota(\alpha')/2$. Let $U_{\alpha}$ be the $T$-root group of $G$ associated to $\alpha$. Observe that $\pi'((U_{\alpha})_{k'})$ is a $T'$-stable smooth connected subgroup of $G'$ whose Lie algebra is the $\alpha'$-root space in $\Lie (G')$. But this condition uniquely defines a $T'$-root group of $G'$ by \cite[2.3.11]{CGP}, and hence $\pi'((U_{\alpha})_{k'})=U_{\alpha'}$. That is, the root group $U_{\alpha}$ is a smooth levitation of $U_{\alpha'}$ in $G$.

 Let $\Psi$ be the subset of $\Phi(G,T)$ consisting of all roots $\alpha$ defined as above, that is $$\Psi:=\big\{\alpha \in \Phi(G,T)\hspace{0.5mm}\big|\hspace{0.5mm} \alpha \textnormal{ is non-divisible, }\iota^{-1}(\alpha) \in \Phi(H',T') \sqcup \tfrac{1}{2}\Phi(H',T')\big\}.$$ Let us define $H$ to be the subgroup of $G$ generated by the $T$-root groups $U_{\alpha}$ for each $\alpha \in \Psi$ along with the torus $S$. Note that $H$ is smooth and connected. Since the formation of $H$ commutes with base change \cite[2.47]{Mi}, we deduce that $\pi'(H_{k'})=(H')^{\circ}$. That is, $H$ is a levitation of $(H')^{\circ}$ in $G$. 

Now let $\Lambda$ be a set of representatives of the cosets of $(H')^{\circ}(k')$ in $H'(k')$; we can choose them so that $\Lambda$ normalises $T'$ (since all maximal tori in $H'T'$ are conjugate by an element of $(H')^{\circ}(k')$). For each $h \in \Lambda$, we define an element $g:=g(h) \in G(k)$ as follows. Since $\ker\pi'$ is unipotent, $\pi'$ restricts to a surjection $\pi'|_{N_G(T)_{k'}}:N_G(T)_{k'} \to N_{G'}(T')$ by \cite[3.2.1]{CGP}. Observe that $$(\ker \pi'|_{N_G(T)_{k'}})^{\circ} = (N_G(T)_{k'} \cap \ker\pi')^{\circ}= Z_G(T)_{k'} \cap \mathscr{R}_u(G_{k'})= \mathscr{R}_u(Z_G(T)_{k'}),$$ where the final equality is due to Lemma \ref{secondlem} (along with the fact that centralisers commute with base change). In particular, $\ker \pi'|_{N_G(T)_{k'}}$ is smooth. Hence, since $k'$ is separably closed, $\pi'$ induces a surjection $N_G(T)(k') \to N_{G'}(T')(k')$. So choose $g' \in N_G(T)(k')$ such that $\pi'(k')(g')=h$.

 We claim that $g'$ normalises $H$. To see this, we first observe that the formation of root groups of $G$ commutes with base change by $k'/k$. Since $g'$ normalises $T$ and $h$ normalises $H'$, % and $T'$
it follows that $g'$ stabilises the set of $T$-root groups $\{U_{\alpha}\hspace{0.5mm}|\hspace{0.5mm}\alpha \in \Psi\}$. Note that $h$ normalises $S'$, % By defn it normalises $H'$.
hence $g'$ normalises $S$. % As $g'$ normalises $T$, and $h$ normalises $S'$ which is the isomorphic image of $S_{k'}$ as $\ker\pi'$ is unipotent.
So indeed $g' \in N_G(H)(k')$. % As $H$ is generated by these root groups and $S$.

 The Cartan subgroup $Z_G(T)$ normalises each $T$-root group $U_{\alpha}$ of $G$ for $\alpha \in \Psi$, and of course it centralises $S$, so $Z_G(T)$ is contained in $N_G(H)$.  Let $M= N_G(H)^{\Sm}$.  Clearly $T$ normalises $M$, so $M$ is generated by $H$, $Z_G(T)$ and the root groups $U_\alpha$ such that $\alpha$ is perpendicular to all the roots of $H$; hence $N_G(H)$ normalises $M$.  Applying \cite[17.47]{Mi} to $M$ tells us that $N_G(M)^0= M^0$, and we deduce that $N_G(H)$ is itself smooth. % It says that $N_G((N_G(H)^{\Sm})^{\circ})$ is smooth, but this equals $N_G(H)$ since it is generated by $H$, the perp root groups to $H$ along with $Z_G(T)$.
Moreover $N_G(H)/N_G(H)^{\circ}$ is constant as $k=k^{\sep}$. It follows that the inclusion $N_G(H)(k) \hookrightarrow N_G(H)(k')$ induces a surjection $N_G(H)(k) \to N_G(H)(k')/N_G(H)^{\circ}(k')$. % \cong (N_G(H)/HT)(k')$.
So there exists $x' \in N_G(H)^{\circ}(k')$ such that $g'x' \in N_G(H)(k)$; denote $g=g(h):=g'x'$. Then the subgroup $\langle H, g(h) \hspace{0.5mm}|\hspace{0.5mm} h \in \Lambda \rangle$ of $G$ is a smooth levitation of $H'$ in $G$.

 We have shown that $H'$ smoothly levitates in $G$. Then Proposition \ref{largestlevitation} says that $H'$ admits a largest smooth levitation in $G$, which must be normalised by $T$ since all of its $T(k)$-conjugates are also smooth levitations of $H'$ in $G$. % conjugation of $H$ by an element of $T(k)$ corresponds downstairs to conjugation of $H'$ by an element of $T'(k')$, but we know $H'$ is $T'$-stable.
This completes the proof of (i).

 We next prove (ii). Observe that maximal tori, Cartan subgroups, root groups and pseudo-parabolic subgroups of $G$ are all examples of regular subgroups of $G$. Since the formation of all of the aforementioned subgroups of $G$ commutes with base change by separable field extensions, we can assume without loss of generality that $k=k^{\sep}$.

 In the course of proving (i), we showed that if $H$ is a Cartan subgroup (resp. root group) of $G$ then $\pi(H_{k'})$ is a maximal torus (resp. root group) of $G'$. % The argument works fine for multipliable roots.
If $H$ is a pseudo-parabolic subgroup of $G$ then $\pi(H_{k'})$ is a parabolic subgroup of $G'$ by \cite[3.5.4]{CGP}. This proves the ``if'' direction.

 It remains to prove the converse. Without loss of generality we can assume that $H'$ is connected. Henceforth take $H$ to be the largest smooth levitation of $H'$ in $G$; % It may be larger than the $H$ constructed in the proof of (i).
we know it exists by (i).

 We first assume that $H'$ is a maximal torus of $G'$, i.e., $H'=T'$. By Proposition \ref{largestlevitation}, $i_G^{-1}(R_{k'/k}(T'))$ is the largest levitation of $T'$ in $G$, so
 $$i_G(T) \subseteq i_G(H) \subseteq R_{k'/k}(T') \subseteq R_{k'/k}(G').$$
 Observe that $i_G(T)$ is a maximal torus of $R_{k'/k}(G')$, since $i_G(G)$ contains a Levi subgroup of $R_{k'/k}(G')$ by \cite[9.2.1(2)]{CGP}. Moreover, $R_{k'/k}(T')$ is a Cartan subgroup of $R_{k'/k}(G')$ by \cite[A.5.15(3)]{CGP}. Consequently $i_G(H)$ centralises $i_G(T)$. Now \cite[11.14, Cor.\ 2]{Bo} says that $i_G$ sends Cartan subgroups of $G$ onto Cartan subgroups of $i_G(G)$, hence $$H \subseteq i_G^{-1}\big(Z_{i_G(G)}(i_G(T))\big)=i_G^{-1}i_G(Z_G(T))=Z_G(T)\ker i_G.$$ Observe that $(\ker i_G)(k)$ is finite, since $G$ is pseudo-reductive. Hence $H^{\circ}\subseteq Z_G(T)$. But we showed earlier that $Z_G(T)$ is a smooth levitation of $T'$ in $G$, so $H^{\circ}=Z_G(T)$.

 We next assume that $H'$ is a $T'$-root group of $G'$, say $H'=U_{\alpha'}$ for some $\alpha' \in \Phi(G',T')$. The aforementioned injection $\iota:\Phi(G',T') \hookrightarrow \Phi(G,T)$ factors through a bijection $\iota_0:\Phi(G',T') \to \Phi(i_G(G),i_G(T))$. Let $\alpha_0:=\iota_0(\alpha')$ and consider the associated $i_G(T)$-root group $U_{\alpha_0}$ of $i_G(G)$. Once again define $\alpha \in \Phi(G,T)$ as follows: if $\iota(\alpha')$ is non-divisible in $\Phi(G,T)$ then set $\alpha=\iota(\alpha')$, otherwise set $\alpha=\iota(\alpha')/2$. Consider the associated $T$-root group $U_{\alpha}$ of $G$. We showed earlier that $U_{\alpha}$ is a smooth levitation of $H'$ in $G$, so $U_{\alpha} \subseteq H$.

 Recall from Lemmas \ref{adjunctionprop} and \ref{adjunctionpropnew} that $R_{k'/k}(H')$ is the largest levitation of $H'$ in $R_{k'/k}(G')$. Observe that $R_{k'/k}(H')$ is $i_G(T)$-stable, since it is smooth and all of its $i_G(T)(k)$-conjugates are also smooth levitations of $H'$ in $R_{k'/k}(G')$. Combining \cite[2.3.6, 2.3.16]{CGP} tells us that $i_G(G) \cap R_{k'/k}(H') =U_{\alpha_0}$; in particular $i_G(G) \cap R_{k'/k}(H')$ is smooth. So $U_{\alpha_0}$ is the largest levitation of $H'$ in $i_G(G)$. Clearly $i_G(U_{\alpha}) \subseteq U_{\alpha_0}$; in fact equality holds by Lie algebra considerations. % Explicitly see Prop. 1.12(iii) of Damian's notes.
Then $$H \subseteq i_G^{-1}(U_{\alpha_0})=i_G^{-1}i_G(U_{\alpha})=U_{\alpha}\ker i_G.$$ Once again since $(\ker i_G)(k)$ is finite, we deduce that $H^{\circ}=U_{\alpha}$.

 Finally, assume that $H'$ is a parabolic subgroup of $G'$. Observe that $H'$ is generated by the $T'$-root groups $U_{\alpha'}$ for each $\alpha' \in \Phi(H',T')$ along with $T'$. Define $\alpha \in \Phi(G,T)$ as previously. Consider the subgroup $P$ of $G$ generated by each of these $T$-root groups $U_{\alpha}$ along with $Z_G(T)$. Certainly $P$ is a pseudo-parabolic subgroup of $G$, and it is a levitation of $H'$ in $G$. Hence $P$ is contained in $H$. % In some other situation, can $H$ get bigger?
Then $H$ is also a pseudo-parabolic subgroup of $G$ by \cite[3.5.8]{CGP}. In particular, $H$ is connected. If $H$ is strictly larger than $P$ then the set of non-multipliable roots of $\Phi(H,T)$ is strictly larger than that of $\Phi(P,T)$, which is in natural bijection with $\Phi(H',T')$, contradicting the fact that $H$ is a levitation of $H'$ in $G$. So $H=P$. This completes the proof of (ii).
\end{proof}

\begin{cor}
\label{thm:levitating_regsubgroupscor}
 Let $G$ be a pseudo-reductive $k$-group, where the root system of $G_{k^{\sep}}$ is reduced. Let $H'$ be a smooth subgroup of $G'$. Suppose there exists a maximal torus $T'$ of $G'$ that normalises $H'$, and suppose $T'$ levitates.  Then $H'$ has a smooth levitation $H$.  If $H'$ is a torus then there exists a unique such $H$ that is a torus.
\end{cor}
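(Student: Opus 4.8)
The plan is to reduce to Theorem~\ref{thm:levitating_regsubgroups} by upgrading the hypothesis "$T'$ levitates" to "$T'$ levitates smoothly (to a torus)", which is exactly what is available when the root system of $G_{k^{\sep}}$ is reduced. First I would invoke Lemma~\ref{thm:levitating_tori}: since $T'$ is a torus of $G'$ and $T'$ levitates in $G$, say to a subgroup $Z$, we have $\pi'(Z_{k'}) = q_{G'}((i_G(Z))_{k'}) = T'$ with $i_G(Z)$ a subgroup of $i_G(G)$; so the hypothesis of Lemma~\ref{thm:levitating_tori} is met (using $\pi' = q_{G'}\circ(i_G)_{k'}$ from~(\ref{pi'_eqn}) and the reducedness assumption, which guarantees $\ker i_G$ is central). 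Hence there is a torus $T$ of $G$ with $\pi'(T_{k'}) = T'$. In particular $T'$ smoothly levitates to the torus $T$, and $T'$ (being a maximal torus of the reductive quotient of $T'$ inside $G'$, or at least a torus) is normalised by $T'$; more to the point, $T'$ normalises $H'$ by hypothesis.

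Next I would apply Theorem~\ref{thm:levitating_regsubgroups}(i) directly, taking the maximal torus of $G'$ appearing there to be a maximal torus $\widetilde{T}'$ of $G'$ containing $T'$ and normalising $H'$ — but here a small care is needed: Theorem~\ref{thm:levitating_regsubgroups} requires a \emph{maximal} torus $T'$ of $G'$ whose levitation exists, whereas our hypothesis is only about some torus $T'$ normalising $H'$. Since the root system of $G_{k^{\sep}}$ is reduced and $T'$ levitates, I would instead argue as follows: let $\widetilde{T}'$ be a maximal torus of $G'$ containing $T'$ such that $\widetilde{T}'$ still normalises $H'$ — such a $\widetilde{T}'$ exists because $Z_{G'}(T')$ contains $T'$ and normalises $H'$ (as $T'$ is central in $Z_{G'}(T')$ and $H'$ is $T'$-stable), wait, this requires $Z_{G'}(T')$ to normalise $H'$, which need not hold. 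So instead I take the hypothesis at face value: the statement of Corollary~\ref{thm:levitating_regsubgroupscor} really only needs $T'$ to levitate and normalise $H'$; if $T'$ is not maximal, I replace it throughout by a maximal torus $T'_{\max}$ of $G'$ containing it, noting that $T'_{\max}$ lies in $Z_{G'}(T')$ which normalises $H'T'$... Here I expect the cleanest route is: the corollary is stated for the case where $T'$ can be taken maximal (or the proof of Theorem~\ref{thm:levitating_regsubgroups}(i) works verbatim for an arbitrary torus normalising $H'$, once one knows it levitates smoothly). Granting that, Lemma~\ref{thm:levitating_tori} shows $T'$ levitates \emph{smoothly} to a torus $T$ of $G$, and then Theorem~\ref{thm:levitating_regsubgroups}(i) applies with this $T$ to produce a largest smooth subgroup $H$ of $G$ with $\pi'(H_{k'}) = H'$, normalised by $T$.

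For the final sentence, suppose $H'$ is itself a torus. Then apply Lemma~\ref{thm:levitating_tori} once more, now with $H'$ in place of $T'$: since $H'$ is a torus of $G'$ and (by the first part, or directly by hypothesis if $H' = T'$) there is a subgroup of $i_G(G)$ mapping onto $H'$ under $q_{G'}$, Lemma~\ref{thm:levitating_tori} gives a \emph{unique} torus $H$ of $G$ with $\pi'(H_{k'}) = H'$. Uniqueness is exactly the content of Lemma~\ref{thm:levitating_tori}. I expect the main obstacle to be the bookkeeping in the previous paragraph: making sure the torus $T'$ in the hypothesis of the corollary can legitimately be fed into Theorem~\ref{thm:levitating_regsubgroups} — either by verifying that Theorem~\ref{thm:levitating_regsubgroups}(i) never actually used maximality of $T'$ beyond the existence of its smooth levitation, or by enlarging $T'$ to a maximal torus while preserving the normalisation of $H'$, which works precisely because in the reduced-root-system case the levitation-to-a-torus produced by Lemma~\ref{thm:levitating_tori} is compatible with passing between $T'$ and a maximal torus containing it inside $Z_{G'}(T')$.
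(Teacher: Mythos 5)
Your proof is essentially correct and uses the same two ingredients the paper combines: Lemma~\ref{thm:levitating_tori} to upgrade ``levitates'' to ``levitates to a (maximal) torus'', and Theorem~\ref{thm:levitating_regsubgroups}(i) applied with the resulting maximal torus $T$ of $G$. The concluding uniqueness claim for toral $H'$ is also handled as in the paper, by applying Lemma~\ref{thm:levitating_tori} once more with $H'$ in place of $T'$.

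However, the entire middle paragraph of your proposal --- the back-and-forth about whether $T'$ is maximal, whether $Z_{G'}(T')$ normalises $H'$, and whether one can replace $T'$ by some $\widetilde{T}'$ --- stems from a misreading of the hypothesis. The corollary explicitly assumes that $T'$ is a \emph{maximal} torus of $G'$ that normalises $H'$. So the worry you flag (``Theorem~\ref{thm:levitating_regsubgroups} requires a maximal torus $T'$ of $G'$ whose levitation exists, whereas our hypothesis is only about some torus $T'$ normalising $H'$'') does not arise: $T'$ is already maximal, and Lemma~\ref{thm:levitating_tori} then guarantees that the torus $T$ it produces is maximal in $G$, which is exactly what Theorem~\ref{thm:levitating_regsubgroups}(i) needs. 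Dropping that digression yields a clean two-step proof matching the paper's.
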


\begin{proof}
Combine Theorem~\ref{thm:levitating_regsubgroups}(i) with Lemma~\ref{thm:levitating_tori}.
\end{proof} 

 In the setting of Theorem \ref{thm:levitating_regsubgroups} and Corollary \ref{thm:levitating_regsubgroupscor}, it is possible that $H'$ is connected whilst the largest smooth levitation of $H'$ in $G$ is not connected. Consider the following example.

\begin{example}\label{mini} Following \cite[9.1.10]{CGP}, over a suitably chosen imperfect field $k$ of characteristic $2$ one can construct a pseudo-simple $k$-group $G$ of type $A_1$ with $\ker i_G \cong \Z/2\Z$. Let $T$ be a maximal torus of $G$, and let $U$ be a $T$-root group of $G$. Denote $T':=\pi'(T_{k'})$, and consider the $T'$-root group $\pi'(U_{k'})=:H'$ of $G'$. Then $U\ker i_G$ is the largest levitation of $H'$ in $G$; it is clearly smooth. Observe that $\ker i_G \subsetneq U$ since $\ker i_G$ is central in $G$, so $U\ker i_G$ is not connected.
\end{example}

\subsection{Non-levitating subgroups}\label{woundgroups} 

 In this subsection we present two examples of non-levitating subgroups, both of which are variants of the same underlying construction. In each case the root system is reduced and the hypotheses of Theorem~\ref{thm:levitating_crit} are satisfied.
Both examples are in characteristic 2; however one could construct analogues for arbitrary characteristic $p>0$.

 In the first example we construct a non-smooth subgroup $H'$ of a reductive $k'$-group $G'$ such that $H'$ does not levitate in $G=R_{k'/k}(G')$.

\begin{example}\label{ex:nonsmooth_nonlev} Let $k$ be an imperfect field of characteristic 2 and let $k'=k(a)$, where $a^2\in k$ and $a\not\in k$. Let $K=k'(\sqrt{a})$. Let $A'$ be a $k'$-algebra. Consider the free $A'$-module $A' \otimes_{k'} K$, and its $A'$-basis $\{1 \otimes 1,1 \otimes \hspace{-0.4mm}\sqrt{a}\}$. The left regular representation of $(A' \otimes_{k'} K)^{\times}$ on $A' \otimes_{k'} K$ induces an embedding $$R_{K/k'}(\mathbb{G}_m)(A'):=(A' \otimes_{k'} K)^{\times}=\bigg\{\begin{pmatrix} x & ya \\ y & x \end{pmatrix} \bigg|\hspace{0.5mm} x,y \in A'; ~x^2 + y^2\hspace{-0.4mm}a \neq 0\bigg\} \subset \GL_2(A').$$ % We regard $R_{K/k'}(\Gm)$ as a subgroup of $\GL_2$ in the usual way.
Below we write $(x,y)$ as shorthand for $\begin{pmatrix} x & ya \\ y & x \end{pmatrix}$.  This subfunctor $R_{K/k'}(\mathbb{G}_m)$ of the $k'$-group $\GL_2$ is representable, and contains the center $Z(\GL_2) \cong \mathbb{G}_m$. Set $H':=R_{K/k'}(\Gm)\cap\SL_2$. A simple calculation shows that $H'=R_{K/k'}(\mu_2)$. In particular, $\dim H' =1$ and $H'$ is not smooth.

 Now consider the map $q_{H'}:R_{k'/k}(H')_{k'} \to H'$ defined in $(\ref{counitadjunctioncomp})$. We claim that its image on $\overline{k}$-points is trivial, where $\overline{k}$ is an algebraic closure of $k$. % Its schematic image should be $\mu_2$.

 We identify $R_{k'/k}(H')(\ovl{k})$ with $H'(\ovl{k}\otimes_ k k')$, and by \cite[A.5.7]{CGP} we can identify $q_{H'}(\ovl{k})$ with the map $H'(m)\colon H'(\ovl{k}\otimes_ k k')\to H'(\ovl{k})$ induced by the multiplication map $m\colon \ovl{k}\otimes_ k k'\to \ovl{k}$, $c'\otimes d\mapsto c'd$. So let $(x,y)\in H'(\ovl{k}\otimes_ k k')$; then $x,y\in \ovl{k}\otimes_ k k'$ and $x^2+ (1\otimes a)y^2= 1$. We can write $(x,y)= (s\otimes 1+ t\otimes a, u\otimes 1+ v\otimes a)$ for some $s,t,u,v\in \ovl{k}$. Then we have \begin{align*} 1\otimes 1 &= (s\otimes 1+ t\otimes a)^2+ (1\otimes a)(u\otimes 1+ v\otimes a)^2 \\& = s^2\otimes 1+ t^2\otimes a^2+ u^2\otimes a+ v^2\otimes a^3 \\ & = (s^2+ t^2a^2)\otimes 1+ (u^2+ v^2a^2)\otimes a. \end{align*} Since $1$ and $a$ are linearly independent over $k$, we deduce that $s^2+ t^2a^2= 1$ and $u^2+ v^2a^2= 0$, so $s+ ta= 1$ and $u+ va= 0$. Hence $$H'(m)(x,y)= H'(m)(s\otimes 1+ t\otimes a, u\otimes 1+ v\otimes a)= (s+ ta, u+ va)= (1,0).$$ The claim follows.
 
 Now let $G'$ be any reductive $k'$-group that contains $H'$, % for example choose $G'$ to be $\SL_n$ for sufficiently large $n$, to show this example still works even if $G'$ is simply connected and semisimple.
and let $G:=R_{k'/k}(G')$. Let $H$ be any -- not necessarily smooth -- subgroup of $G$ such that $q_{G'}(H_{k'})\subseteq H'$. Then $H\subseteq R_{k'/k}(H')$ by Lemma \ref{adjunctionpropnew}, so
 $$q_{G'}(H_{k'})\subseteq q_{G'}(R_{k'/k}(H')_{k'})= q_{H'}(R_{k'/k}(H')_{k'}).$$ But $q_{H'}\colon R_{k'/k}(H')_{k'}\to H'$ is not surjective by the claim, as $H'$ is positive-dimensional. This shows that $H'$ does not levitate in $G$.
\end{example}

We next give an example of a smooth wound unipotent subgroup $U'$ of $G'$ which does not levitate in $G$.

\begin{example}\label{ex:smooth_nonlev} Let $k$, $k'$, $K$, $\ovl{k}$ be as in Example \ref{ex:nonsmooth_nonlev}, regard $R_{K/k'}(\Gm)$ as a $k'$-subgroup of $\GL_2$, and set $H':=R_{K/k'}(\Gm) \cap \SL_2$. As we showed in Example \ref{ex:nonsmooth_nonlev}, $q_{H'}$ is not surjective.

 Let $U'$ be the image of $R_{K/k'}(\Gm)$ under the canonical projection $\GL_2 \to \PGL_2$. Observe that $U'$ is smooth and wound unipotent. Let $\phi'\colon \SL_2\to \PGL_2$ be the canonical projection, and note that $(\phi')^{-1}(U')=H'$. Set $G':=\PGL_2$ as a $k'$-group, and $G:=\mathscr{D}(R_{k'/k}(G'))$. 
 
 Suppose $U$ is a levitation of $U'$ in $G$. Since $\ker i_G$ is trivial, we have a canonical inclusion $U\subseteq R_{k'/k}(U')$ by Lemma \ref{adjunctionpropnew}. The Weil restriction functor is continuous so it preserves preimages, and hence $$R_{k'/k}(\phi')^{-1}(U)\subseteq R_{k'/k}(\phi')^{-1}(R_{k'/k}(U')) = R_{k'/k}(H').$$

By functoriality of the counit (Diagram (\ref{functorialityofqG'})), we have that $\smash{\phi' \circ q_{H'}=q_{U'} \circ R_{k'/k}(\phi')_{k'}}$. Now $\phi'(\ovl{k})$ gives a bijection from $H'(\ovl{k})$ to $U'(\ovl{k})$, % in char 2 $\phi'(\ovl{k})$ is an iso
and $q_{U'}(\ovl{k})$ gives a surjection from $U(\ovl{k})$ to $U'(\ovl{k})$ by hypothesis. Moreover, the image of $R_{k'/k}(\phi')$ is $G$ by \cite[1.3.4]{CGP}. It follows that $q_{H'}$ is surjective, which is a contradiction. We conclude that $U'$ does not levitate in $G$, smoothly or otherwise. On the other hand, every torus of $G'$ levitates in $G$ by Theorem \ref{thm:levitating_crit}.
\end{example} % This is a counter example to our old (wrong) Corollary \ref{cor:levitating_crit}.

\section{Maximal subgroups}
\label{sec:maximal}

 We turn to a study of maximal smooth subgroups of pseudo-reductive groups, using the results from the previous sections as a tool.  There is a long history of studying maximal subgroups of a simple linear algebraic group over an algebraically closed field; in this context, ``subgroup'' is taken to mean ``smooth subgroup''.  It is known that $G$ has only finitely many conjugacy classes of maximal smooth subgroups of positive dimension % Cor. 3 of https://www.ma.imperial.ac.uk/~mwl/lowcharmay02.pdf
and there are explicit lists of these due to various authors including Borel-de Siebenthal \cite{BD}, Dynkin \cite{Dy,Dy1}, Seitz \cite{Se, Se1}, Liebeck-Seitz \cite{LS,LS1,LS2} and Testerman \cite{Te}. This gives us a relatively good understanding of the maximal smooth subgroups of an arbitrary reductive group over an algebraically closed field. Over an imperfect field the situation is less clear -- in Example \ref{ex:lots_of_wounds} we demonstrate that a simple $k$-group can admit infinitely many isomorphism classes of positive-dimensional maximal smooth subgroups. 

The smoothness requirement is indeed essential, as the following remark shows. 

\begin{rem}\label{rem:nonsmooth_maximal} 
The notion of maximality is not very well-behaved if we allow non-smooth subgroups. For instance, let $G$ be a split semisimple  
 group defined over the finite field ${\mathbb F}_p$ and let $H$ be any proper subgroup of $G$. For $r\geq 1$, let $G_r$ be the $r$th Frobenius kernel of $G$, a normal subgroup of $G$. Since each $G_r$ is infinitesimal, $\dim HG_r = \dim H < \dim G$. Let $U$ be a root group of $G$ that is not contained in $H$ (one must exist, since $G$ is generated by its root groups). Let $n$ be the smallest integer such that the finite group $U \cap H$ does not contain a copy of $\alpha_{p^n}$. Since $U \cap G_r \cong \alpha_{p^r}$ for all $r \geq 1$, we have an ascending chain of proper subgroups of $G$ $$HG_n\subseteq HG_{n+1}\subseteq \cdots$$ which never becomes stationary. On the other hand, if $G$ is a pseudo-simple group then any positive-dimensional smooth subgroup $H$ of $G$ lies in a maximal smooth subgroup.  To see this, let $M$ be a smooth proper subgroup of $G$ such that $M\supseteq H$ and $M$ has maximal dimension.  If $K$ is a proper smooth subgroup of $G$ and $M\subseteq K$ then $K^0= M^0$, so $K\subseteq N_G(M^0)$, so $K\subseteq N_G(M^0)^{\Sm}$.  Note that $N_G(M^0)^{\Sm}$ is proper as $\dim(M)> 0$ and $G$ is simple.  It follows that $N_G(M^0)^{\Sm}$ is a maximal smooth subgroup.
 
 Hence we restrict ourselves to smooth subgroups in the study of maximality.
\end{rem}
 
\begin{example}\label{ex:lots_of_wounds} Let $k$ be an imperfect field of characteristic $2$ and let $k'=k(\hspace{-0.4mm}\sqrt{a})$ for some $a \in k \setminus k^2$. As in Example \ref{ex:nonsmooth_nonlev}, consider the canonical embedding of $R_{k'/k}(\Gm)$ in $\GL_2$. Taking the quotient of $R_{k'/k}(\mathbb{G}_m)$ by the center $Z(\GL_2) \cong \mathbb{G}_m$ gives us an embedding of $\smash{U:=R_{k'/k}(\mathbb{G}_m)/\mathbb{G}_m}$ into $\PGL_2$. By choosing infinitely many elements $a \in k \setminus k^2$ that are pairwise distinct modulo $k^2$, we can construct infinitely many such subgroups $U$;  they are pairwise non-isomorphic because the minimal fields of definition of their geometric unipotent radicals are all different. All such subgroups $U$ constructed in this manner are smooth, connected, wound unipotent, $1$-dimensional, and are maximal smooth in $G$.
\end{example}

% It can happen that $H'$ is a maximal smooth subgroup of $G'$ which smoothly levitates in $G$, and yet its largest smooth levitation $H$ is not a maximal smooth subgroup of $G$. We give two examples to illustrate this, one with \'etale $H'$ and the other with connected $H'$.

\begin{example}\label{ex:non_maximal_levitation} Let $k'/k$ be a non-trivial purely inseparable finite field extension.
%
% (a). Let $L$ be a semisimple $k$-group. Let $F$ be an \'etale subgroup of $L$ such that $H':=F_{k'}$ is maximal smooth in $G':=L_{k'}$. Set $\smash{G:=R_{k'/k}(G')}$ and $\smash{H:=R_{k'/k}(H')}$. By Lemma \ref{adjunctionpropnew} the largest smooth levitation of $H'$ in $G$ is $H$. Let $\iota:F \hookrightarrow L$ be the inclusion. The functoriality of the unit $j$ defined in \eqref{unitadjunctioncomp} tells us that the following square commutes and all of its maps are embeddings: 
% \begin{center}\begin{tikzcd}[row sep={4.5em,between origins},column sep={8em,between origins},]
%L \arrow[r, "j_L"]  & G \\
%F \arrow[r, "j_F"] \arrow[u, "\iota"] & H \arrow[u, "R_{k'/k}(\iota_{k'})", swap] \end{tikzcd}\end{center} Since $F$ is \'etale, the map $j_F:F \to H$ defined in $(\ref{unitadjunctioncomp})$ is an isomorphism by \cite[A.5.13]{CGP}. Hence $H$ embeds in $G$ as a proper subgroup of the Levi subgroup $j_L(L)$. It is not even true that $H$ is a $n$th maximal smooth subgroup of $G$ for some absolutely bounded integer $n=n(H)$, as if we take $k'/k$ to have a tower of field extensions $$k=:k_0 \subsetneq k_1 \subsetneq ... \subsetneq k_n =: k'$$ then this induces a chain of smooth subgroups $$j_L(L)=:L_0 \subsetneq L_1 \subsetneq ... \subsetneq L_n := G$$ where $L_i:=R_{k_i/k}((L_0)_{k_i})$ for each $1 \leq i \leq n$.
%
%(b).
Assume that $k$ has characteristic $2$. Let $G'=\mathrm{SO}_7$ and let $H'$ be an irreducibly embedded copy of $G_2$ in $G'$; note that $H'$ is maximal smooth in $G'$. % It is self-normalising, see 4.2 of https://web.math.princeton.edu/~nmk/exceptional19.pdf
Once again let $\smash{G:=R_{k'/k}(G')}$ and $\smash{H:=R_{k'/k}(H')}$. Observe that $H$ is perfect since $H'$ is simply connected \cite[A.7.11]{CGP}.
Hence $H$ is properly contained in the (smooth) derived subgroup $\mathscr{D}(G)$ of $G$. We claim that $G$ is not perfect. To see this, consider the simply connected cover $\mathrm{Spin}_7$ of $G'$ and its centre $Z \cong \mu_2$. The central quotient $R_{k'/k}(\mathrm{Spin}_7)/R_{k'/k}(Z) \to \mathscr{D}(G)$ is an isomorphism by \cite[1.3.4]{CGP}. But $\dim R_{k'/k}(Z)>0$ since $Z$ is infinitesimal, and so counting dimensions implies that $G$ is not perfect.  We conclude that $H$ is not maximal.
%Observe that all Levi subgroups of $G$ are contained in $\mathscr{D}(G)$, in accordance with Theorem \ref{thm:nicetheorem}.
% Any Levi $L$ of $G$ is isomorphic to $\SO_7$ by \cite[7.1.3]{CP1}, so it is perfect, and hence contained in $\mathscr{D}(G)$.
\end{example}
% Wound subgroups $H'$ of $\PGL_2$ should also work, see the above example. Then $R_{k'/k}(H') \subseteq \mathscr{D}(R_{k'/k}(\PGL_2)) \subsetneq R_{k'/k}(\PGL_2)$. To see this, let $\rho:\GL_2\to \PGL_2$ is the canonical projection. Set $G=R_{k'/k}(G')$ and $H= R_{k'/k}(H')$. It is easily seen that $H'$ is maximal in $G'$.  But $H=R_{k'/k}(\rho)(R_{k'/k}(M'))$ by the functoriality of Weil restriction, and this is contained in $H_1:=R_{k'/k}(\rho)(R_{k'/k}(\SL_2))$, which is a proper subgroup of $G$. Hence $H$ is not maximal in $G$.  Moreover, $H_1$ contains a Levi subgroup of $G$, so $H_1$ levitates over $G'$.

\begin{proof}[Proof of Theorem~\ref{thm:nicetheorem}] 
We first prove (i). Let $H$ be a maximal smooth subgroup of $G$. Denote $H':=\pi'(H_{k'})$. If $H'=G'$ then Theorem \ref{thm:levitating}(i) tells us that $H_{k^{\sep}}$ contains an almost Levi subgroup of $G_{k^{\sep}}$. So henceforth we can assume that $H' \neq G'$. Then, by maximality, $H$ is the largest smooth levitation of $H'$ in $G$ (which exists by Proposition \ref{largestlevitation}).

 Now let $M'$ be a smooth proper subgroup of $G'$ that contains $H'$. We claim that either of the conditions in the statement of the theorem is enough to ensure that $M'$ smoothly levitates in $G$. Given the claim, again using Proposition \ref{largestlevitation}, there exists a largest smooth levitation $M$ of $M'$ in $G$. Of course $H \subseteq M \subsetneq G$. Hence $H=M$ by maximality, so $H'=M'$. So indeed $H'$ is a maximal smooth subgroup of $G'$. It remains to prove the claim.

 Assume first that $G=R_{k'/k}(G')$. Then Lemma \ref{adjunctionprop} says that there exists a smooth levitation of $M'$ in $G$, namely $R_{k'/k}(M')$.

 Next assume that $H$ is a regular subgroup of $G$. Let $T$ be a maximal torus of $G$ that normalises $H$. By maximality, either $H$ contains $T$ or $HT=G$. Let $T'= \pi'(T_{k'})$, a maximal torus of $G'$.  If $H$ contains $T$ then $T' \subseteq H' \subseteq M'$. If $HT=G$ then $H'T'= G'$, so $H'$ must contain ${\mathscr D}(G')$, so every overgroup of $H'$ is normal in $G'$.  In both cases $M'$ is normalised by $T'$, so we can apply Theorem \ref{thm:levitating_regsubgroups}(i), which tells us that $M'$ smoothly levitates in $G$.

 We next prove (ii). For simplicity, we redefine/reuse some of the notation used in the proof of (i). Let $H'$ be a maximal smooth subgroup of $G'$. By assumption there exists at least one smooth levitation of $H'$ in $G$. Then, by Proposition \ref{largestlevitation}, there exists a largest smooth levitation $H$ of $H'$ in $G$. 

 Let $M$ be a smooth subgroup of $G$ that properly contains $H$. By maximality of $H'$ and $H$ and since $\pi'(M_{k'})$ is smooth, we have that $$H'=\pi'(H_{k'}) \subsetneq \pi'(M_{k'})=G'.$$ That is, $M$ is a smooth levitation of $G'$ in $G$. But then Theorem \ref{thm:levitating}(i) tells us that $M_{k^{\sep}}$ contains an almost Levi subgroup of $G_{k^{\sep}}$. This completes the proof.
\end{proof}

\begin{question}
 If $H$ is a smooth maximal subgroup of $G$ and $\pi'(H_{k'})$ is properly contained in $G'$, must $\pi'(H_{k'})$ be a maximal smooth subgroup of $G'$?  The answer is yes under the hypotheses of Theorem~\ref{thm:nicetheorem}, but we do not know whether this is true in general.
\end{question}

\bigskip
{\textbf {Acknowledgements}}:
Work on this paper began during a visit to the Mathematisches Forschungsinstitut Oberwolfach under the Oberwolfach Research Fellows Programme; we thank them for their support. The fourth author was supported by a postdoctoral fellowship of the Alexander von Humboldt Foundation.  We're grateful to David Stewart for comments and for pointing out a mistake in an earlier draft.  For the purpose of open access, the authors have applied a Creative Commons Attribution (CC BY) licence to any Author Accepted Manuscript version arising from this submission.

\bibliographystyle{amsalpha}

\newcommand{\etalchar}[1]{$^{#1}$}
\providecommand{\bysame}{\leavevmode\hbox to3em{\hrulefill}\thinspace}
\providecommand{\MR}{\relax\ifhmode\unskip\space\fi MR }
% \MRhref is called by the amsart/book/proc definition of \MR.
\providecommand{\MRhref}[2]{%
	\href{http://www.ams.org/mathscinet-getitem?mr=#1}{#2} }
\providecommand{\href}[2]{#2}

\end{document}